\numberwithin{equation}{section}
\crefname{thm}{theorem}{theorems}
\crefname{lemma}{lemma}{lemmas}
\crefname{prop}{proposition}{propositions}
\crefname{assumption}{assumption}{assumptions}
\crefname{example}{example}{examples}
\crefname{cor}{corollary}{corollaries}
\declaretheorem[name=Theorem,numberwithin=section]{thm}
\declaretheorem[name=Proposition,sibling=thm]{prop}
\declaretheorem[name=Lemma,sibling=thm]{lemma}
\declaretheorem[name=Corollary,sibling=thm]{cor}
\declaretheorem[name=Example,style=definition]{example}
\DeclareMathOperator*{\argmin}{argmin}
\DeclareMathOperator*{\supp}{supp}
\newcommand{\design}{\mathbb{X}}
\newcommand{\E}{\mathbb{E}} 
\newcommand{\Evmu}{\mathbb{E}_\vmu} 
\newcommand{\R}{\mathbb{R}}
\newcommand{\Rn}{\mathbb{R}^n}
\newcommand{\Rm}{\mathbb{R}^m}
\newcommand{\proba}[1]{\mathbb{P}\left( #1 \right)}
\newcommand{\conebetaat}[1]{{\mathcal{S}_n^{[ #1 ]}}}
\newcommand{\conebeta}{{\conebetaat{\beta}}}
\newcommand{\conebetak}[1]{{\mathcal{S}_{ #1 }^{[ \beta ]}}}
\newcommand{\vmu}{{\boldsymbol{\mu}}}
\newcommand{\vtheta}{{\boldsymbol{\theta}}}
\newcommand{\vv}{{\boldsymbol{v}}}
\newcommand{\ve}{{\boldsymbol{e}}}
\newcommand{\vzero}{{\boldsymbol{0}}}
\newcommand{\vx}{{\boldsymbol{x}}}
\newcommand{\htheta}{{\boldsymbol{\hat \theta}}}
\newcommand{\vu}{{\boldsymbol{u}}}
\newcommand{\hmu}{\boldsymbol{\hat \mu}}
\newcommand{\vpi}{{\boldsymbol{\pi}}}
\newcommand{\tildemu}{{\boldsymbol{\tilde \mu}}}
\newcommand{\vy}{\mathbf{y}}
\newcommand{\vxi}{{\boldsymbol{\xi}}}
\newcommand{\vg}{{\boldsymbol{g}}}
\DeclareMathOperator{\Ima}{Im}
\newcommand{\increasings}{{\mathcal{S}_n^\uparrow}}
\newcommand{\increasingsat}{{\mathcal{S}^\uparrow}}
\newcommand{\convexs}{\mathcal{S}_n^{\textsc{c}}}
\newcommand{\scalednorm}[1]{\Vert  #1 \Vert}
\newcommand{\scalednorms}[1]{\scalednorm{ #1 }^2}
\newcommand{\inftynorm}[1]{\vert #1 \vert_\infty}
\newcommand{\hmuora}{
    \hmu^{\textsc{oracle}}
}
\newcommand{\vomega}{{\boldsymbol{\omega}}}
\newcommand{\euclidnorm}[1]{\vert  #1 \vert_2}
\newcommand{\euclidnorms}[1]{\vert  #1 \vert_2^2}
\newcommand{\K}{{\mathcal{K}}}
\newcommand{\C}{{\mathcal{C}}}
\newcommand{\ls}{\hmu^{\textsc{ls}}}
\newcommand{\lineality}[1]{{\mathrm{Lin}}( #1 )}
\newcommand{\coneleft}{C^L}
\newcommand{\coneright}{C^R}
\begin{document}

\begin{abstract}
    The performance of Least Squares (LS) estimators is studied
    in isotonic, unimodal and convex regression.
    Our results have the form of sharp oracle inequalities that account
    for the model misspecification error.
    In isotonic and unimodal regression, 
    the LS estimator achieves the nonparametric rate $n^{-2/3}$
    as well as a parametric rate of order $k/n$ up to logarithmic factors,
    where $k$ is the number of constant pieces of the true parameter.

    In univariate convex regression, the LS estimator satisfies an adaptive risk bound of order $q/n$ up to logarithmic factors,
    where $q$ is the number of affine pieces of the true regression function.
    This adaptive risk bound holds for any design points.
    While \citet{guntuboyina2013global} established that the nonparametric rate
    of convex regression is of order $n^{-4/5}$ for equispaced design points,
    we show that the nonparametric rate of convex regression can be as slow
    as $n^{-2/3}$ for some worst-case design points.
    This phenomenon can be explained as follows:
    Although convexity brings more structure than unimodality,
    for some worst-case design points this extra structure 
    is uninformative and the nonparametric rates of unimodal regression and convex regression are both $n^{-2/3}$.
\end{abstract}

\title{Sharp oracle inequalities for Least Squares estimators in shape restricted regression}

\begin{aug}
    \author{\fnms{Pierre C.} \snm{Bellec}
        \thanksref{ecodec}
        \ead[label=e1,mark]{pierre.bellec@ensae.fr}
    }
    \affiliation{ENSAE and UMR CNRS 9194}
    \address{ENSAE,\\ 3 avenue Pierre Larousse, \\ 92245 Malakoff Cedex, France. }
    \thankstext{ecodec}{
        This work was
        supported by GENES and by the grant Investissements d'Avenir
    (ANR-11-IDEX-0003/Labex Ecodec/ANR-11-LABX-0047).}
\end{aug}

\date{\today}

\maketitle

\section{Introduction}
Assume that we have the observations
\begin{equation}
    \label{eq:observations-restricted}
    Y_i = \mu_i + \xi_i, \qquad i=1,...,n,
\end{equation}
where $\vmu =(\mu_1,...,\mu_n)^T \in\Rn$ is unknown,
$\vxi = (\xi_1,...,\xi_n)^T$ is a noise vector with $n$-dimensional Gaussian distribution
$\mathcal{N}(\vzero,\sigma^2 I_{n\times n})$ where $\sigma>0$
and $I_{n\times n}$ is the $n\times n$ identity matrix.
We will also use the notation $\vg \coloneqq (1/\sigma)\vxi$ so that
$\vy = \vmu + \vxi = \vmu + \sigma\vg$ and $\vg\sim\mathcal N(\vzero,I_{n\times n})$.
Denote by $\Evmu$ and $\mathbb P_\vmu$ the expectation and the probability
with respect to the distribution of the random variable $\vy=\vmu+\vxi$.
The vector $\vy = (Y_1,...,Y_n)^T$ is observed and the goal is to estimate $\vmu$.
The estimation error is measured
with the scaled norm $\scalednorm{\cdot}$ defined by 
 \begin{equation}
     \scalednorms{\vu} = \frac{1}{n} \sum_{i=1}^n u_i^2, \qquad \vu = (u_1,...,u_n)^T\in\Rn.
\end{equation}
The error  of an estimator $\hmu$ of $\vmu$ 
is given by $\scalednorms{\hmu - \vmu}$. 
Let also $\inftynorm{\cdot}$ be the infinity norm
and $\euclidnorm{\cdot}$ be the Euclidean norm, so that $\frac{1}{n} \euclidnorms{\cdot} = \scalednorms{\cdot}$.

This paper studies the Least Squares (LS) estimator
in shape restricted regression under model misspecification.
The LS estimator
over a nonempty closed set $K\subset\R^n$ is defined by
\begin{equation}
    \ls(K) \in \argmin_{\vu\in K} \scalednorms{\vy - \vu}.
\end{equation}
Model misspecification allows that the true parameter $\vmu$
does not belong to $K$.
There is a large literature on the performance of the LS estimator
in isotonic and convex regression, that is,
when the set $K$ is the set of all nondecreasing sequences
or the set of convex sequences.
Some of these results are reviewed in the following subsections.

\subsection{Isotonic regression}
\label{intro:increasing}

Let $\increasings$ be the set of all nondecreasing sequences,
defined by
\begin{equation}
    \increasings \coloneqq \{\vu=(u_1,...,u_n)^T\in\Rn:  u_i \le u_{i+1}, \quad i=1,...,n-1\}.
\end{equation}
The set $\increasings$ is a closed convex cone.
Two quantities are useful to describe the performance of the LS estimator $\ls(\increasings)$.
First, define the total variation by
\begin{equation}
    V(\vtheta) \coloneqq \max_{i=1,...,n}\theta_i - \min_{i=1,...,n}\theta_i,
    \qquad
    \vtheta=(\theta_1,...,\theta_n)^T\in\R^n.
    \label{eq:def-V}
\end{equation}
If $\vu=(u_1,\dots,u_n)^T \in \increasings$,
its total variation is simply $V(\vu) = u_n - u_1$.
Second, for $\vu=(u_1,\dots,u_n)^T \in \increasings$,
let $k(\vu)\ge 1$ be the integer such that $k(\vu)-1$ is the number
of inequalities $u_i\le u_{i+1}$ that are strict for $i=1,\dots,n-1$ (the number of jumps of $\vu$).

Previous results on the performance of the LS estimator
$\ls(\increasings)$
can be found 
in \cite{meyer2000degrees,zhang2002risk,chatterjee2013risk,chatterjee2014new}, where risk bounds or oracle inequalities with leading 
constant strictly greater than 1 are derived.
Two types of risk bounds or
oracle inequalities have been obtained so far.
If, $\vmu=(\mu_1,...,\mu_n)^T\in\increasings$, it is known
\cite{meyer2000degrees,zhang2002risk,chatterjee2013risk,chatterjee2014new} that
for some absolute constant $c > 0$,
\begin{equation}
\label{eq:rate23-example-nonoi}
    \Evmu \scalednorms{\ls(\increasings) - \vmu}
    \le \frac{c \sigma^2 \log(en)}{n}
    + c \sigma^2 \left(\frac{V(\vmu)}{\sigma n}\right)^{2/3}
\end{equation}
and $c\le 12.3$, cf. \cite{zhang2002risk}.
If $\vmu\in\increasings$,
the following oracle inequality was proved in \cite{chatterjee2013risk}:
\begin{equation}
\label{eq:increasings-adapt-example-oi}
    \Evmu \scalednorms{\ls(\increasings) - \vmu}
    \le 6 
        \min_{\vu\in\increasings}
        \left(
            \scalednorms{\vu - \vmu}
            +
            \frac{\sigma^2 k(\vu)}{n}\log\frac{en}{k(\vu)}
        \right).
\end{equation}
The risk bounds \eqref{eq:rate23-example-nonoi} and
\eqref{eq:increasings-adapt-example-oi} hold under the assumption
that $\vmu\in\increasings$,
which does not allow for any model misspecification.
We will see below that this assumption can be dropped.
The oracle inequality
\eqref{eq:rate23-example-nonoi} implies that the LS estimator
achieves the rate $n^{-2/3}$ while
\eqref{eq:increasings-adapt-example-oi} yields
a parametric rate (up to logarithmic factors) if $\vmu$ is well approximated
by a piecewise constant sequence with not too many pieces.
Let us note that the bound \eqref{eq:increasings-adapt-example-oi} can be used
to obtain that $\ls(\increasings)$ converges at the rate $n^{-2/3}$ up to logarithmic factors,
thanks to the approximation argument given in \cite[Lemma 2]{bellec2015sharp}.

Mimimax lower bounds that match \eqref{eq:rate23-example-nonoi}
and \eqref{eq:increasings-adapt-example-oi} up
to logarithmic factors have been obtained 
in \cite{chatterjee2013risk,bellec2015sharp}.
If $D>0$ is a fixed parameter
and
$\log(en)^3 \sigma^2 \le  n D^2$,
the bound
\eqref{eq:rate23-example-nonoi}
yields the rate $(D\sigma^2)^{2/3}n^{-2/3}$
for the risk of $\ls(\increasings)$.
By the lower bound \cite[Corollary 5]{bellec2015sharp},
this rate is minimax optimal
over the class $\{\vmu\in\increasings: V(\vu) \le D \}$
if
$\log(en)^3 \sigma^2 \le  n D^2$.
Proposition 4 in \cite{bellec2015sharp} shows
that there exist absolute constants $c,c'>0$ such that
for any estimator $\hmu$,
\begin{equation}
    \sup_{\vmu\in\increasings:k(\vmu)\le k}
    \mathbb P_{\vmu}
    (
        \scalednorms{\hmu - \vmu}
        \ge c\sigma^2 k /n
    ) \ge c'.
    \label{eq:minimax-lower-increasings}
\end{equation}
Together, \eqref{eq:increasings-adapt-example-oi} and \eqref{eq:minimax-lower-increasings}
establish that for any $k=1,...,n$,
the minimax rate over the class $\{\vmu\in\increasings: k(\vmu)\le k \}$ is of
order $\sigma^2 k / n$ up to logarithmic factors.

\subsection{Convex regression with equispaced design points}
\label{intro:convex}

If $n\ge 3$, define the set of convex sequences $\convexs$ by
\begin{align}
    \convexs 
    & \coloneqq \{ \vu=(u_1,\dots,u_n)^T\in\Rn: \; 2u_i \le u_{i+1} + u_{i-1}, \  i=2,\dots,n-1 \}.
\end{align}
For $\vu=(u_1,\dots,u_n)^T \in \convexs$,
let $q(\vu)\in\{1,...,n-1\}$ be the smallest integer $q$
such that there exists a partition $(T_1,...,T_q)$ of $\{1,...,n\}$
and real numbers $a_1,...,a_q$ satisfying
\begin{equation}
    u_i = a_j(i-l) + u_{l},
    \qquad
    i,l\in T_j,
    \qquad
    j=1,...,q.
\end{equation}
The quantity $q(\vu)$ is the smallest integer $q$ such that $\vu$ 
is piecewise affine with $q$ pieces.
If $x_1 < ... < x_n$ are equispaced design points in $\R$, i.e.,
$x_i = (i-1) ( x_2 - x_1 ) + x_ 1$, $i=2,...,n$,
then
\begin{equation}
    \convexs = \{
        \vu\in\Rn,
        \vu = 
        (f(x_1),
            ...,
        f(x_n))^T
        \text{ for some convex function }
        f:\R\rightarrow\R 
    \}.
\end{equation}
The performance of the LS estimator over convex sequences
has been recently studied in \cite{guntuboyina2013global,chatterjee2013risk}, where it was proved that
if $\vmu\in\convexs$,
the estimator $\hmu=\ls(\convexs)$ satisfies
\begin{align}
    \Evmu \scalednorms{\hmu - \vmu}
    & \le 
    C \left(
        \min_{\vu\in\convexs}
        \left(
            \scalednorms{\vu - \vmu}
            +
            \frac{\sigma^2 q(\vu)}{n}\left(\log\frac{en}{q(\vu)}\right)^{5/4}
        \right)
    \right)
    \label{eq:convexs-adapt-example-oi}
\end{align}
for some absolute constant $C>0$.
If $\vmu\in\convexs$ and $nR_\vmu^2 \ge \log(en)^{5/4} \sigma^2$
where $R_\vmu$ 
defined in \Cref{cor:rate45} below
is a constant that depends only on $\vmu$,
then the estimator $\hmu=\ls(\convexs)$ satisfies
\begin{align}
    \Evmu \scalednorms{\hmu - \vmu}
    & \le C
     \left(\frac{\sqrt{R_{\vmu}} \sigma^2}{n}\right)^{4/5}  \log(en) 
    \label{eq:rate45-example-nonoi}
\end{align}
for some absolute constant $C>0$.
The bound 
\eqref{eq:convexs-adapt-example-oi}
yields an almost parametric rate
if $\vmu$ can be well approximated by a piecewise affine
sequence with not too many pieces.
If $\bar R>0$ is a fixed parameter
and
$n\bar R^2 \ge \log(en)^{5/4} \sigma^2$,
the bound
\eqref{eq:rate45-example-nonoi}
yields the rate $(\bar R^2\sigma^8)^{1/5}n^{-4/5} \log(en)$, which is minimax optimal
over the class $\{\vmu\in\convexs: R_\vmu \le \bar R \}$
up to logarithmic factors \cite{guntuboyina2013global}.

The above results hold in convex regression for equispaced design
points. The following subsection introduces the notation
that will be used to study convex regression with
non-equispaced design points.

\subsection{Non-equispaced design points in convex regression}
If $x_1 < ... < x_n$ are non-equispaced design points in $\R$,
define the cone
\begin{align}
    \K^C_{x_1,...,x_n}
    &\coloneqq \{
        \vu\in\Rn,
        \vu = 
        (f(x_1),
            ...,
        f(x_n))^T
        \text{ for some convex function }
        f:\R\rightarrow\R 
    \}.
\end{align}
This can be rewritten as
\begin{align}
    \K^C_{x_1,...,x_n}
    &\coloneqq \left\{
        \vu\in\Rn:
        \tfrac{u_i - u_{i-1}}{x_i - x_{i-1}}
        \le
        \tfrac{u_{i+1} - u_{i}}{x_{i+1} - x_{i}},
        i=2,...,n-1
    \right\}.
    \label{eq:def-convex-x_i}
\end{align}
For any $\vu = (u_1,...,u_n)^T \in\K^C_{x_1,...,x_n}$,
we say that $\vu$ is piecewise affine with
$k$ pieces if there exist
real numbers $a_1,...,a_k$
and a partition
$(T_1,...,T_k)$ of $\{1,..,n\}$
such that 
\begin{equation}
    u_i = a_j(x_i - x_{l}) + u_{l},
    \qquad
    i,l\in T_j,
    \qquad
    j=1,...,k.
    \label{eq:piecewise-affine}
\end{equation}
If $\vu=(f(x_1),...,f(x_n))^T$
for some convex function $f:\R\rightarrow\R$ and $f$ is a piecewise affine function with $k$ pieces,
then $\vu$ is piecewise affine with $k$ pieces.
For any $\vu\in\K^C_{x_1,...,x_n}$, let $q(\vu)\ge 1$ be the
smallest integer such that $\vu$
is piecewise affine with $q(\vu)$ pieces.
The quantity $q(\vu)\ge 1$ satisfies
\begin{equation}
    q(\vu) - 1 \le
    \left|\left\{
        i=2,...,n-1:\quad
        \frac{u_i - u_{i-1}}{x_i - x_{i-1}}
        < 
        \frac{u_{i+1} - u_{i}}{x_{i+1} - x_{i}}
    \right\} \right|.
\end{equation}
The performance of the LS estimator $\ls(\K^C_{x_1,...,x_n})$ is also studied in \cite{guntuboyina2013global}
in the case where the design points are almost equispaced:
The bounds 
\eqref{eq:convexs-adapt-example-oi}
and
\eqref{eq:rate45-example-nonoi}
both hold if $\convexs$ is replaced with $\K^C_{x_1,...,x_n}$
and if $C>0$ is a constant that depends on the ratio
\begin{equation}
    \label{eq:ratio}
    \frac{\max_{i=2,...,n} (x_i-x_{i-1})}{
        \min_{i=2,...,n} (x_i-x_{i-1})
    },
\end{equation}
and this constant $C$ becomes arbitrarily large
as this ratio tends to infinity.

Although \eqref{eq:rate45-example-nonoi}
and
\eqref{eq:convexs-adapt-example-oi}
provide an accurate picture of the performance 
of the LS estimator for equispaced (or almost equispaced)
design points, it is not known whether
these bounds continue to hold for other design points.
A goal of the present paper is to fill this gap.
\Cref{s:convex} shows that the oracle inequality
\eqref{eq:convexs-adapt-example-oi} holds 
irrespective of the design points,
while the nonparametric rate of the LS estimator can be as slow as
$n^{-2/3}$ for some worst-case design points.

It is clear that a convex function is unimodal in the sense that it is 
first non-increasing and then nondecreasing.
The following subsection introduces the set of unimodal sequences,
and \Cref{ss:convex-unimodal} studies the relationship
between convex regression and unimodal regression.

\subsection{Unimodal regression}
Let $m=1,...,n$.
A sequence $\vu\in\R^n$
is unimodal with mode at position $m$
if and only if $\vu_{\{1,...,m\}}$ is non-increasing
and $\vu_{\{m,...,n\}}$ is nondecreasing.
Define the convex set
\begin{equation}
    K_m \coloneqq \{ \vu=(u_1,...,u_n)^T\in\R^n: u_1 \ge ... \ge u_m \le u_{m-1} \le ... \le u_n \}.
    \label{eq:def-Km-unimodal}
\end{equation}
The convex set $K_m$ is the set of all unimodal sequences with mode at position $m$
and
\begin{equation}
    \mathcal U \coloneqq \cup_{m=1,...,n} K_m
\end{equation}
is the set of all unimodal sequences.
The set $\mathcal U$ is non-convex.
For all $\vu\in\mathcal U$, let $k(\vu)$ be the smallest integer $k$ such that
$\vu$ is piecewise constant with $k$ pieces, i.e.,
the smallest integer $k$ such that there exists a partition $(T_1,...,T_k)$ of $\{1,...,n\}$
such that for all $l=1,...,k$,
\begin{itemize}
    \item the sequence $\vu_{T_l}$ is constant, and
    \item the set $T_l$ is convex in the sense that if $a,b\in T_l$
        then $T_l$ contains all integers between $a$ and $b$.
\end{itemize}
If $\vu\in\increasings$, this definition of $k(\vu)$
coincides with that defined above.

As the inclusion $\increasings\subset\mathcal U$ holds, 
the lower bound \eqref{eq:minimax-lower-increasings} implies that
for any estimator $\hmu$,
\begin{equation}
    \sup_{\vmu\in\mathcal U :k(\vmu)\le k}
    \mathbb P_{\vmu}
    (
        \scalednorms{\hmu - \vmu}
        \ge c\sigma^2 k /n
    ) \ge c' > 0.
    \label{eq:minimax-lower-U}
\end{equation}
\citet{chatterjee2015adaptive}
recently obtained an adaptive risk bound
of the form
\begin{equation}
    \mathbb P\left(
        \scalednorms{\ls(\mathcal U) - \vmu}
        \le
        \frac{C\sigma^2}{n}\Big( k(\vu) \log(en) \Big)^{3/2}
    \right) \ge 1 - \frac 1 n,
    \label{eq:unimodal-32}
\end{equation}
where $C>0$ is an absolute constant.
This risk bound does not match the lower bound \eqref{eq:minimax-lower-U}
because of the exponent $3/2$.

\subsection{Organisation of the paper}

\Cref{s:preliminary} recalls properties of closed convex set and closed convex cones.
\begin{itemize}
    \item 
        \emph{General oracle inequalities.}
        In \Cref{s:tools-soi} we establish general tools
        that yield sharp oracle inequalities:
        \Cref{cor:tangent-cone} and \Cref{thm:isomorphic}.
    \item
        \emph{Sharp bounds in isotonic regression.}
        In \Cref{s:isotonic} we apply results of \Cref{s:tools-soi}
        to the isotonic LS estimator. We obtain an adaptive risk bound
        that is tight with sharp numerical constants.
    \item
        \emph{On the relationship between unimodal and convex regression.}
        \Cref{s:convex} studies the role of the design points in univariate 
        convex regression:
        Although the nonparametric rate is of order $n^{-4/5}$ for equispaced design points,
        this rate can be as slow as $n^{-2/3}$ for some worst-case design points that are studied in \Cref{s:convex},
        whereas the adaptive risk bound \eqref{eq:convexs-adapt-example-oi} holds for any design points.
        The relation between convex regression and unimodal regression is discussed in \Cref{ss:convex-unimodal}:
        Although convexity brings more structure than unimodality,
        for some worst-case design points this extra structure 
        is uninformative and the nonparametric rates of unimodal regression and convex regression are both $n^{-2/3}$.
        \Cref{s:unimodal} studies unimodal regression
        and improves some of the results of \cite{chatterjee2015adaptive}
        on the performance of the unimodal LS estimator.
    \item
        \emph{Comparison of different misspecification errors.}
        In \Cref{s:misspecification} we compare
        different quantities that represent the estimation error
        when the model is misspecified.
        In particular, \Cref{s:misspecification} explains that if $K$ is a closed convex set and $\vmu\notin K$,
        the sharp oracle inequalities obtained in \Cref{s:tools-soi,s:isotonic,s:convex}
        yield upper bounds on the estimation error $\scalednorm{\ls(K) - \Pi_K(\vmu)}$.
        If $\vmu\notin K$, the LS estimator consistently estimates the
        projection of the true parameter $\vmu$ onto $K$
        for $K=\increasings$ and $K=\convexs$.
\end{itemize}
Some proofs are delayed to \Cref{s:proofs-unimodal-to-convex,s:proof-iso}.

\subsection{Preliminary properties of closed convex sets}
\label{s:preliminary}

We recall here several properties of convex sets that will be used in the paper.
Given a closed convex set $K\subset\Rn$,
denote by $\Pi_K:\Rn\rightarrow K$ the projection onto $K$.
For all $\vy\in\Rn$, $\Pi_K(\vy)$ is the unique vector in $K$
such that
\begin{equation}
    \label{eq:Pi_characterisation-not-a-cone}
    (\vu -\Pi_K(\vy))^T(\vy-\Pi_K(\vy)) \le 0,
    \qquad
    \vu\in K.
\end{equation}
Inequality \eqref{eq:Pi_characterisation-not-a-cone}
can be rewritten as follows
\begin{equation}
    \scalednorms{\Pi_K(\vy) - \vy}
    +
    \scalednorms{\vu - \Pi_K(\vy)}
    \le
    \scalednorms{\vu - \vy},
    \qquad
    \vy\in\Rn,
    \vu\in K,
    \label{eq:cos}
\end{equation}
which is a consequence of the cosine theorem.
The LS estimator over $K$ is exactly the projection
of $\vy$ onto $K$, i.e., $\ls(K) = \Pi_K(\vy)$.
In this case, \eqref{eq:cos} yields that for all
$\vu\in K$,
\begin{equation}
    \scalednorms{\ls(K) - \vy}
    \le
    \scalednorms{\vu - \vy}
    -
    \scalednorms{\vu - \ls(K)}.
    \label{eq:strong-convexity}
\end{equation}
Inequality
\eqref{eq:strong-convexity}
can be interpreted in terms of strong convexity:
the LS estimator $\ls(K)$ solves an optimization problem
where the function to minimize is strongly convex with respect to the norm
$\scalednorm{\cdot}$. 
Strong convexity grants inequality
\eqref{eq:strong-convexity}, which is stronger than
the inequality
\begin{equation}
    \scalednorms{\ls(\mathcal U) - \vy}
    \le
    \scalednorms{\vu - \vy}
    \qquad
    \text{for all }
    \vu\in\mathcal U
    ,
    \label{eq:ls-nonconvex}
\end{equation}
which holds for any closed set $\mathcal U\subset\R^n$.

Now, assume that $K$ is a closed convex cone.
In this case, \eqref{eq:Pi_characterisation-not-a-cone} implies that
for all $\vy\in\Rn$, $\Pi_K(\vy)$ is the unique vector in $K$ 
such that
\begin{equation}
    \label{eq:Pi_characterisation}
    \Pi_K(\vy)^T \vy = \euclidnorms{\Pi_K(\vy)}
    \quad
    \text{and}
    \quad
    \forall \vtheta\in K,\;\;
    \vtheta^T \vy \le \vtheta^T \Pi_K(\vy).
\end{equation}
The property \eqref{eq:Pi_characterisation} readily implies that
for any $\vv\in\Rn$ we have
\begin{equation}
    \euclidnorm{\Pi_K(\vv)}
    =
    \sup_{\vtheta\in K: \euclidnorm{\vtheta}\le 1}
        \vv^T \vtheta.
    \label{eq:equivalence-delta}
\end{equation}
Define the statistical dimension of the cone $K$ by
\begin{equation}
\label{eq:def-delta}
   \delta(K) 
   \coloneqq
   \E \left[\euclidnorms{\Pi_K(\vg)}\right]
   =
   \E\left[ \vg^T \Pi_K(\vg)\right]
   =
   \E \left[
           \left(
    \sup_{\vtheta\in K: \euclidnorm{\vtheta}\le 1}
        \vg^T \vtheta
    \right)^2
    \right]
   ,
\end{equation}
where $\vg\sim\mathcal N (\vzero,I_{n\times n})$. 
The Gaussian width of a closed convex cone $K$ is defined by 
$w(K) = \E[ \sup_{\vv\in K:\euclidnorm{\vv} = 1} \vg^T\vv]$ where $\vg\sim\mathcal N(\vzero,I_{n\times n})$.
For any closed convex cone $K$, the relation $w^2(K) \le \delta(K) \le w^2(K) + 1$
is established in \cite[Propsition 10.2]{amelunxen2014living}.
The following properties of $\delta(\cdot)$ will be useful for our purpose.
If $K\subset \R^q$, $C\subset \R^p$ are two closed convex cones,
then $K\times C$ is a closed convex cone in $\R^{q+p}$ and
\begin{equation}
    \delta(K\times C) 
    = \delta(K) + \delta(C).
    \label{eq:delta-product}
\end{equation}
The statistical dimension 
$\delta(\cdot)$ is monotone in the following sense:
If $K,L$ are two closed convex cones in $\R^n$ then
\begin{equation}
    \label{eq:delta-inclusion}
    K\subset L
    \quad
    \Rightarrow
    \quad
    \delta(K)
    \le
    \delta(L).
\end{equation}
We refer the reader to \cite[Proposition 3.1]{amelunxen2014living}
for straightforward proofs of the equivalence between
the definitions \eqref{eq:def-delta} and the properties \eqref{eq:delta-product}, \eqref{eq:delta-inclusion} and \eqref{eq:equivalence-delta}.
An exact formula is available for the statistical
dimension of $\increasings$.
Namely, it is proved in
\cite[(D.12)]{amelunxen2014living} that
\begin{equation}
\label{eq:delta-sum1overk}
    \delta(\increasings)
    = \sum_{k=1}^n \frac{1}{k},
\end{equation}
and this formula readily implies that
\begin{equation}
    \log(n)
    \le
    \delta(\increasings)
    \le
    \log(en).
    \label{eq:delta-increasings-logn}
\end{equation}
The following upper bound on the statistical dimension
of the cone $\K^C_{x_1,...,x_n}$ is derived in
\cite{guntuboyina2013global}:
\begin{equation}
\label{eq:bound-delta-54}
    \delta(\K^C_{x_1,...,x_n})
    \le c (\log(en))^{5/4},
\end{equation}
for some constant $c>0$ that depends on the ratio \eqref{eq:ratio}.
In \Cref{thm:delta-convex}, we derive a tighter bound independent of the
design points.

\section{General tools to derive sharp oracle inequalities}
\label{s:tools-soi}
In this section,
we develop two general tools to derive
sharp oracle inequalities
for the LS estimator over a closed convex set.

\subsection{Statistical dimension of the tangent cone}

Let $\vmu\in\Rn$,
let $K$ be a closed convex subset of $\Rn$
and let $\vu\in \R^n$.
Define the tangent cone at $\vu$ by
\begin{equation}
    \mathcal T_{K,\vu} \coloneqq
    \text{closure} \left\{
            t(\vv-\vu):
            \;
            t\ge 0,
            \vv\in K
    \right\}
    \label{eq:def-tangent-cone-TKu}
\end{equation}
If $K$ is a closed convex cone, then $\mathcal T_{K,\vu} = \{\vv - t\vu| \vv\in K, t\ge 0 \}$.
\begin{prop}
    \label{prop:bound-tangent}
    Let $\vmu\in\Rn$,
    let $K$ be a closed convex subset of $\Rn$
    and let $\vu\in K$.
    Then almost surely
    \begin{equation}
        \scalednorms{\ls(K) - \vmu}
        -
        \scalednorms{\vu - \vmu}
        \le
        \frac{\sigma^2}{n} \euclidnorms{\Pi_{\mathcal T_{K,\vu}}(\vg)}
        \label{eq:bound-tangent}
    \end{equation}
    where $\vg = (1/\sigma) \vxi$.
\end{prop}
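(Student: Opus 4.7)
The plan is to reduce the inequality to a basic variational bound for the Gaussian projection on the tangent cone. The starting point is the strong convexity inequality \eqref{eq:strong-convexity}, which holds for any $\vu\in K$:
\begin{equation*}
    \scalednorms{\ls(K) - \vy} \le \scalednorms{\vu - \vy} - \scalednorms{\vu - \ls(K)}.
\end{equation*}
Substituting $\vy = \vmu + \sigma\vg$ and expanding both sides, the quadratic $\sigma^2 \scalednorms{\vg}$ cancels, and I would rearrange to obtain
\begin{equation*}
    \scalednorms{\ls(K) - \vmu} - \scalednorms{\vu - \vmu}
    \le \frac{2\sigma}{n} \vg^T\bigl(\ls(K) - \vu\bigr) - \frac{1}{n}\euclidnorms{\ls(K) - \vu}.
\end{equation*}

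The second step is to observe that $\vh \coloneqq \ls(K) - \vu$ lies in $\mathcal T_{K,\vu}$: this follows immediately from the definition \eqref{eq:def-tangent-cone-TKu} by taking $t=1$ and $\vv = \ls(K)\in K$. Therefore, by the variational identity \eqref{eq:equivalence-delta} applied to the closed convex cone $\mathcal T_{K,\vu}$, we have
\begin{equation*}
    \vg^T \vh \le \euclidnorm{\vh}\,\euclidnorm{\Pi_{\mathcal T_{K,\vu}}(\vg)}.
\end{equation*}

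Finally, setting $a = \euclidnorm{\vh}$ and $b = \euclidnorm{\Pi_{\mathcal T_{K,\vu}}(\vg)}$, the right-hand side of the previous inequality is at most $(2\sigma a b - a^2)/n$, and the elementary bound $2\sigma a b - a^2 \le \sigma^2 b^2$ (a consequence of $(a - \sigma b)^2 \ge 0$) delivers exactly \eqref{eq:bound-tangent}. There is no real obstacle: the argument is essentially the standard Gaussian width / tangent cone reduction, and the only thing worth being careful about is the membership $\ls(K)-\vu \in \mathcal T_{K,\vu}$ and the correct use of \eqref{eq:equivalence-delta}, which was stated for closed convex cones in the preliminaries.
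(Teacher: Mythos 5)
Your proposal is correct and follows essentially the same route as the paper's proof: both start from the strong convexity bound \eqref{eq:strong-convexity} (equivalently \eqref{eq:kkt-noise}), use that $\ls(K)-\vu$ lies in $\mathcal T_{K,\vu}$, and conclude with the elementary inequality $2ab-b^2\le a^2$ together with the variational identity \eqref{eq:equivalence-delta}. The only cosmetic difference is that you invoke \eqref{eq:equivalence-delta} before the quadratic inequality (as a Cauchy--Schwarz-type bound $\vg^T(\ls(K)-\vu)\le \euclidnorm{\ls(K)-\vu}\,\euclidnorm{\Pi_{\mathcal T_{K,\vu}}(\vg)}$), whereas the paper first bounds by the supremum over the unit ball of the tangent cone and applies \eqref{eq:equivalence-delta} at the end.
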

\begin{proof}
    Let $\hmu=\ls(K)$. 
    Then
    \eqref{eq:strong-convexity} yields
    \begin{equation}
        \euclidnorms{\hmu - \vmu}
        -
        \euclidnorms{\vu - \vmu}
        \le
        2 \vxi^T(\hmu-\vu)
        - \euclidnorms{\hmu-\vu}
        =
        2 \vxi^T\htheta \euclidnorms{\hmu - \vu}
        - \euclidnorms{\hmu-\vu}
        \label{eq:kkt-noise}
    \end{equation}
    where $\htheta$ is defined by $\htheta = (1/\euclidnorm{\hmu - \vu})(\hmu - \vu)$ if $\hmu\ne\vu$
    and $\htheta = \vzero$ otherwise.
    By construction we have $\htheta\in\mathcal T_{K,\vu}$ and $\euclidnorms{\htheta} \le 1$.
    Using the simple inequality $2ab - b^2\le a^2$
    with $a= \sup_{\vtheta\in\mathcal T_{K,\vu}:\euclidnorm{\vtheta}\le 1}\vxi^T\vtheta$
    and $b=\euclidnorm{\hmu - \vu}$,
    we obtain
    \begin{equation}
        \euclidnorms{\hmu - \vmu}
        -
        \euclidnorms{\vu - \vmu}
        \le
        2 \vxi^T(\hmu-\vu)
        - \euclidnorms{\hmu-\vu}
        \le
        \Big(
            \sup_{\vtheta\in\mathcal T_{K,\vu}:\euclidnorms{\vtheta}\le 1}
            \vtheta^T\vxi
        \Big)^2.
        \label{eq:tangent-cone-intermediary}
    \end{equation}
    The equality \eqref{eq:equivalence-delta} completes the proof.
\end{proof}
By definition of the statistical dimension,
$\delta(\mathcal T_{K,\vu}) 
\coloneqq \E  \euclidnorms{\Pi_{\mathcal T_{K,\vu}}(\vg)}$
so that
\eqref{eq:bound-tangent}
readily yields a sharp oracle
inequality in expectation.
Bounds with high probability are obtained as follows.
Let $L\subset \R^n$ be a closed convex cone.
By \eqref{eq:equivalence-delta} we have
$\euclidnorm{\Pi_L(\vg)} = \sup_{\vx\in L:\euclidnorm{\vx}\le 1} \vx^T\vg$.
Thus, by the concentration of suprema of Gaussian processes
\cite[Theorem 5.8]{boucheron2013concentration} 
we have
$$\mathbb P(
    \euclidnorm{\Pi_{L}(\vg)}
    >
    \E \euclidnorm{\Pi_{L}(\vg)}
    + \sqrt{2x}
) \le e^{-x},
$$ and by Jensen's inequality we have
$(\E \euclidnorm{\Pi_{L}(\vg)})^2 \le \delta(L)$.
Combining these two bounds, we obtain
\begin{equation}
    \mathbb P
    (
        \euclidnorm{\Pi_{L}(\vg)}
        \le 
        \delta(L)^{1/2}
        +
        \sqrt{
            2x
        }
    )
    \ge 1 - e^{-x}.
    \label{eq:concentration-delta-not-squared}
\end{equation}
Applying this concentration inequality to the cone $L=\mathcal T_{K,\vu}$ yields the following Corollary.
\begin{cor}
    \label{cor:tangent-cone}
    Let $\vmu\in\Rn$,
    let $K$ be a closed convex subset of $\Rn$,
    let $\vu\in K$ and let $\mathcal T_{K,\vu}$
    be defined in \eqref{eq:def-tangent-cone-TKu}.
    If $\vxi \sim \mathcal N(\vzero, \sigma^2 I_{n\times n})$
    then
    \begin{equation}
        \E\left[ \scalednorms{\ls(K) - \vmu} \right]
        \le
        \scalednorms{\vu - \vmu}
        +
        \frac{\sigma^2}{n}\delta(\mathcal T_{K,\vu})
        .
    \end{equation}
    Furthermore, for all $x>0$ with probability at least $1-e^{-x}$
    we have
    \begin{equation}
        \scalednorms{\ls(K) - \vmu}
        -
        \scalednorms{\vu - \vmu}
        \le
        \frac{\sigma^2}{n}
        \left(
            \delta(\mathcal T_{K,\vu})^{1/2}
            +
            \sqrt{2x}
        \right)^2
        \le
        \frac{\sigma^2}{n}
        \left(
            2 \delta(\mathcal T_{K,\vu})
            +
            4 x
        \right)
        .
    \end{equation}
\end{cor}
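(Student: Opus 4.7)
The plan is to combine \Cref{prop:bound-tangent} with the two standard facts about the statistical dimension that have just been collected: the identity $\delta(L) = \E\,\euclidnorms{\Pi_L(\vg)}$ for any closed convex cone $L$, and the concentration bound \eqref{eq:concentration-delta-not-squared}. Both inequalities in the corollary follow from \eqref{eq:bound-tangent} applied to the cone $L = \mathcal{T}_{K,\vu}$; the only work is pulling the right tail or expectation through the square on the right-hand side.

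For the expectation bound, I would take expectations in \eqref{eq:bound-tangent}. Since $\vg \sim \mathcal{N}(\vzero, I_{n\times n})$, by the very definition \eqref{eq:def-delta} of the statistical dimension we have
\begin{equation*}
    \E\,\euclidnorms{\Pi_{\mathcal{T}_{K,\vu}}(\vg)} = \delta(\mathcal{T}_{K,\vu}),
\end{equation*}
which immediately yields the first displayed inequality after rearranging.

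For the deviation bound, I would apply the concentration inequality \eqref{eq:concentration-delta-not-squared} with $L = \mathcal{T}_{K,\vu}$. This gives, with probability at least $1-e^{-x}$,
\begin{equation*}
    \euclidnorm{\Pi_{\mathcal{T}_{K,\vu}}(\vg)} \le \delta(\mathcal{T}_{K,\vu})^{1/2} + \sqrt{2x}.
\end{equation*}
Squaring both sides and substituting into the bound \eqref{eq:bound-tangent} from \Cref{prop:bound-tangent} produces the first inequality in the high-probability statement. The second inequality is just the elementary bound $(a+b)^2 \le 2a^2 + 2b^2$ applied to $a = \delta(\mathcal{T}_{K,\vu})^{1/2}$ and $b = \sqrt{2x}$.

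There is no real obstacle here: the heavy lifting (the strong convexity argument that turned the basic optimality of $\ls(K)$ into a Gaussian-width bound on a cone) is already done in \Cref{prop:bound-tangent}, and the passage from the Gaussian width of a cone to the statistical dimension together with its Gaussian concentration is encapsulated in \eqref{eq:concentration-delta-not-squared}. The corollary is a clean packaging step, so the proof is essentially three lines of bookkeeping.
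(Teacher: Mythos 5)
Your proposal is correct and matches the paper's own argument: the expectation bound follows from taking expectations in \eqref{eq:bound-tangent} via the definition \eqref{eq:def-delta}, and the deviation bound follows by applying \eqref{eq:concentration-delta-not-squared} to $L=\mathcal T_{K,\vu}$, squaring, and using $(a+b)^2\le 2a^2+2b^2$. Nothing is missing; this is exactly the packaging step the paper performs after \Cref{prop:bound-tangent}.
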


In the well-specified case,
a similar upper bound was derived in \cite[Theorem 3.1]{oymak2013sharp}.
\citet{oymak2013sharp}
also proved a worst-case lower bound that matches the upper bound.

The survey \cite{amelunxen2014living}
provides general recipes
to bound from above the statistical dimension of cones
of several types.
For instance, the statistical dimension of $\increasings$
is given by
the exact formula \eqref{eq:delta-sum1overk}.
Bounds on the statistical dimension
of a closed convex cone $K$
can be obtained using metric entropy results, as 
$\sigma^2\delta(K)/n = \E_\vzero\scalednorms{\Pi_K(\vxi)}$
is the risk of the LS estimator $\ls(K)$ when the true
vector is $\vzero$. This technique is used in \cite{guntuboyina2013global}
to derive the bound \eqref{eq:bound-delta-54}.

If $K\subset V$ where $V$ is a subspace of dimension $d_V$, then
by monotonicity of the statistical dimension \eqref{eq:delta-inclusion} we have
$\delta(\mathcal T_{K,\vu} ) \le \delta(V) = d_V$.
In this case, \eqref{eq:bound-tangent} shows
that the constant 4 in
\cite[Proposition 3.1]{tsybakov2014aggregation}
can be reduced to 1.

\subsection{Localized Gaussian widths}
\label{s:uniform-soi}
In this section, we develop 
yet another technique to derive sharp oracle inequalities for LS estimators over closed convex sets.
This technique is associated
with localized Gaussian widths
rather than statistical dimensions of tangent cones.
The result is given in \Cref{thm:isomorphic} below.
Recently, other general methods have been proposed
\cite{chatterjee2013risk,plan2014high,vershynin2014estimation},
but these methods did not provide
oracle inequalities with leading constant 1.

\begin{thm}
    \label{thm:isomorphic}
    Let $K$ be a closed convex subset
    of $\R^n$,
    let $\vmu\in\Rn$.
    Assume that $\vxi\sim \mathcal{N}(0, \sigma^2 I_{n\times n})$
    and
    that for some $\vu\in K$, there exists $t_*(\vu)>0$ such that
    \begin{equation}
        \label{eq:fixed-point-C}
            \E 
            \sup_{\vv\in K:\;\euclidnorm{\vv - \vu}\le t_*(\vu)} \vxi^T \left(\vv - \vu \right)
             \le
             \frac{t_*(\vu)^2}{2}
             .
    \end{equation}
    Then for any $x>0$, with probability greater than $1-e^{-x}$,
    \begin{equation}
        \label{eq:soi-tstar}
        \scalednorms{\ls(K) - \vmu}
        -
        \scalednorms{\vu - \vmu}
        \le
        \frac{(t_*(\vu) + \sigma\sqrt{2x})^2}{n}
        \le
        \frac{2 t_*^2(\vu) + 4\sigma^2x}{n}
        .
    \end{equation}
\end{thm}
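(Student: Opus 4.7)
The plan is to combine the strong convexity inequality \eqref{eq:strong-convexity} with a concentration argument for a localized Gaussian supremum. Writing $\hmu = \ls(K)$ and substituting $\vy = \vmu + \vxi$ in \eqref{eq:strong-convexity}, I would first derive the basic inequality
\[
    \euclidnorms{\hmu - \vmu} - \euclidnorms{\vu - \vmu} \le 2\vxi^T(\hmu - \vu) - \euclidnorms{\hmu - \vu},
\]
which converts the oracle gap into a Gaussian linear term minus a quadratic penalty (this is exactly \eqref{eq:kkt-noise} from the proof of \Cref{prop:bound-tangent}).

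Next, introduce the localized supremum $\psi(t) \coloneqq \sup_{\vv\in K,\;\euclidnorm{\vv - \vu} \le t} \vxi^T(\vv - \vu)$. Since $\hmu \in K$, taking $R = \euclidnorm{\hmu - \vu}$ gives $\vxi^T(\hmu - \vu) \le \psi(R)$, so the RHS above is at most $\sup_{t \ge 0}[2\psi(t) - t^2]$. The key deterministic ingredient is the \emph{sub-homogeneity} of $\psi$: the map $t \mapsto \psi(t)/t$ is non-increasing on $(0,\infty)$. This follows from convexity of $K$ together with $\vu \in K$: for $s \le t$ and $\vv \in K$ with $\euclidnorm{\vv-\vu}\le t$, the point $\vv' = \vu + (s/t)(\vv - \vu)$ lies in $K$, has $\euclidnorm{\vv'-\vu}\le s$, and realizes $\vxi^T(\vv'-\vu) = (s/t)\vxi^T(\vv-\vu)$.

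A short deterministic argument then shows: if $r > 0$ satisfies $\psi(r) \le r^2/2$, then $\sup_{t \ge 0}[2\psi(t) - t^2] \le r^2$. Indeed, for $t \ge r$ sub-homogeneity gives $\psi(t)/t \le \psi(r)/r \le r/2 \le t/2$, whence $2\psi(t) - t^2 \le 0$; for $t \le r$ monotonicity of $\psi$ gives $\psi(t) \le \psi(r) \le r^2/2$, hence $2\psi(t) - t^2 \le r^2 - t^2 \le r^2$. The problem therefore reduces to showing that the deterministic level $r = t_*(\vu) + \sigma\sqrt{2x}$ satisfies $\psi(r) \le r^2/2$ on an event of probability at least $1 - e^{-x}$.

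The last step is the concentration argument. Apply Borell--TIS to the Gaussian supremum $\psi(r)$, whose worst-case variance is $\sigma^2 r^2$, to obtain $\psi(r) \le \E\psi(r) + \sigma r\sqrt{2x}$ with probability $\ge 1 - e^{-x}$ (this is essentially \eqref{eq:concentration-delta-not-squared} applied to the set $\{\vv - \vu : \vv \in K,\; \euclidnorm{\vv-\vu}\le r\}$). Combined with the hypothesis \eqref{eq:fixed-point-C} and sub-homogeneity in expectation, which together give $\E\psi(r) \le (r/t_*) \E\psi(t_*) \le r t_*/2$, this produces the required inequality $\psi(r) \le r^2/2$ on the good event, and hence $\euclidnorms{\hmu - \vmu} - \euclidnorms{\vu - \vmu} \le r^2 = (t_*(\vu) + \sigma\sqrt{2x})^2$. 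Dividing by $n$ yields the first bound in \eqref{eq:soi-tstar}; the second bound follows from $(a+b)^2 \le 2a^2 + 2b^2$. The main obstacle in carrying this out rigorously is the careful bookkeeping of constants in the concentration step so that the combined bound on $\psi(r)$ fits under the threshold $r^2/2$ with the stated choice $r = t_*(\vu) + \sigma\sqrt{2x}$.
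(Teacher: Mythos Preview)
Your approach is essentially the paper's: basic inequality \eqref{eq:kkt-noise}, a rescaling argument using convexity of $K$ (your sub-homogeneity of $\psi$ is exactly the paper's contraction $\vv=\alpha\hmu+(1-\alpha)\vu$), and Gaussian concentration. The difference is the order in which you apply concentration and rescaling, and this is precisely where your acknowledged ``bookkeeping obstacle'' bites.

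Concretely, with $r=t_*+\sigma\sqrt{2x}$ your chain gives $\psi(r)\le \E\psi(r)+\sigma r\sqrt{2x}\le (r/t_*)\,t_*^2/2+\sigma r\sqrt{2x}=r\big(t_*/2+\sigma\sqrt{2x}\big)$, whereas $r^2/2=r\big(t_*/2+\tfrac12\sigma\sqrt{2x}\big)$, so $\psi(r)\le r^2/2$ fails by exactly a factor of two on the fluctuation term. Your route does work with $r=t_*+2\sigma\sqrt{2x}$, but then you do not get the constant stated in \eqref{eq:soi-tstar}. The fix, which is what the paper does, is to apply concentration at radius $t_*$ rather than at $r$: on the event $Z:=\psi(t_*)\le t_*^2/2+t_*\sigma\sqrt{2x}$, your own sub-homogeneity gives, for $R=\euclidnorm{\hmu-\vu}>t_*$, the bound $2\psi(R)-R^2\le 2(R/t_*)Z-R^2\le (Z/t_*)^2\le(t_*/2+\sigma\sqrt{2x})^2$, and for $R\le t_*$ simply $2\psi(R)-R^2\le 2Z\le t_*^2+2t_*\sigma\sqrt{2x}$. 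Both are $\le(t_*+\sigma\sqrt{2x})^2$, and the stated constant follows.
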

The proof of
\Cref{thm:isomorphic} is related to the isomorphic method \cite{bartlett2006empirical}
and the theory of local Rademacher complexities
in regression with random design
\cite{bartlett2005local,koltchinskii2006local}.
\begin{proof}
    Let $t = t_*(\vu)$ and $\hmu=\ls(K)$ for brevity.
    The concentration inequality for suprema of Gaussian processes
    \cite[Theorem 5.8]{boucheron2013concentration} yields
    that on an event $\Omega(x)$ of probability greater than $1-e^{-x}$,
    \begin{equation}
        Z \coloneqq 
        \sup_{\vv\in K: \; \euclidnorm{\vv - \vu}\le t} \vxi^T(\vv-\vu)
        \le
        \E[Z]
        + t \sigma \sqrt{2x}
        \le t^2/2 + t\sigma\sqrt{2x}.
    \end{equation}
    On the one hand, if $\euclidnorm{\hmu - \vu}\le t$, then
    by \eqref{eq:kkt-noise} on $\Omega(x)$ we have
    \begin{equation}
        \euclidnorms{\hmu - \vmu}
            -
        \euclidnorms{\vu - \vmu}
        \le 2\vxi^T(\hmu - \vu) - \euclidnorms{\hmu - \vu}
        \le 2 Z
        \le t^2 + 2t\sigma\sqrt{2x}
        \le (t+\sigma\sqrt{2x})^2.
    \end{equation}
    On the other hand, if $\euclidnorm{\hmu - \vu} > t$, then $\alpha\coloneqq  t / \euclidnorm{\hmu -\vu}$ belongs to $(0,1)$.
    If $\vv = \alpha\hmu + (1-\alpha)\vu$ then
    $
        \alpha(\hmu - \vu) 
        = \vv - \vu
    $, by convexity of $K$ we have $\vv\in K$
    and by definition of $\alpha$ it holds that $\euclidnorm{\vv - \vu} = t$.
    On $\Omega(x)$,
    \begin{align}
        2\vxi^T(\hmu - \vu) - \euclidnorms{\hmu - \vu}
        &= (2/\alpha) \vxi^T(\vv - \vu)
        - t^2/\alpha^2, \\
        &\le (2/\alpha)Z - t^2/\alpha^2
        = (2t/\alpha)(Z/t) 
        - t^2/\alpha^2, \\
        &\le (Z/t)^2
        \le (t+\sigma\sqrt{2x})^2,
    \end{align}
    where we used $2ab - b^2 \le a^2$ 
    with $b=t/\alpha$ and $a = Z/t$.
    Thus \eqref{eq:soi-tstar} holds on $\Omega(x)$
    for both cases $\euclidnorm{\hmu - \vu} \le t$ and
    $\euclidnorm{\hmu - \vu} > t$.
    Finally, inequality $(u+v)^2\le 2u^2 + 2v^2$ yields that
    $(t+\sigma\sqrt{2x})^2\le 2t^2 + 4\sigma^2x$.
\end{proof}

Note that condition \eqref{eq:fixed-point-C} 
does not depend on the true vector $\vmu$,
but only depends on the vector $\vu$ that appears on the right hand side of the oracle inequality.
The left hand side of \eqref{eq:fixed-point-C}
is the Gaussian width of $\C$ localized around $\vu$.
This differs from the recent analysis of \citet{chatterjee2014new}
where 
the Gaussian width localized around $\vmu$ is studied.
An advantage of considering the Gaussian width localized  around $\vu$
is that the resulting oracle inequality \eqref{eq:soi-tstar} is sharp,
i.e., with leading constant 1.
\citet{chatterjee2014new} proved that 
the Gaussian width localized around $\vmu$ characterizes
a deterministic quantity $t_\vmu$ such that $\euclidnorm{\ls(\C) - \vmu}$
concentrates around $t_\vmu$.
This result from \cite{chatterjee2014new}
grants both an upper bound and a lower bound
on $\euclidnorm{\ls(\C) - \vmu}$,
but it does not imply nor is implied by a sharp oracle inequality such as 
\eqref{eq:soi-tstar} above.
Thus, the result of \cite{chatterjee2014new} is
of a different nature than
\eqref{eq:soi-tstar}.

A strategy to find a quantity $t_*$
that satisfies \eqref{eq:fixed-point-C}
is to use metric entropy results together with Dudley integral bound,
although Dudley integral bound 
may not be tight
\cite[Section 13.1, Exercises 13.4 and 13.5]{boucheron2013concentration}.

\section{Sharp bounds in isotonic regression}
\label{s:isotonic}
We study in this section the performance of $\ls(\increasings)$
using the general tools developed in the previous section.
We first apply \Cref{cor:tangent-cone}.
To do so, we need to bound from above
the statistical dimension of the tangent cone
$\mathcal T_{\increasings, \vu}$.
In fact, it is possible to characterize the tangent cone
$\mathcal T_{\increasings, \vu}$
and to obtain a closed formula for its statistical dimension.

\begin{prop}
    Let $\vu\in\increasings$ and let $k=k(\vu)$.
    Let $(T_1,...,T_k)$
    be a partition of $\{1,...,n\}$ such that
    $\vu$ is constant on each $T_j, j=1,...,k$.
    Then
    \begin{equation}
        \mathcal T_{\increasings, \vu}
        =
        \increasingsat_{|T_1|}
        \times
        ...
        \times
        \increasingsat_{|T_k|}.
    \end{equation}
\end{prop}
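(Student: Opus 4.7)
The plan is to prove the two inclusions separately, using the explicit description of the tangent cone at a point of a closed convex cone given in \Cref{s:preliminary}: since $\increasings$ is a closed convex cone and $\vu\in\increasings$, one has
\begin{equation*}
    \mathcal T_{\increasings,\vu} = \{\vv - t\vu:\; \vv\in\increasings,\; t\ge 0\}.
\end{equation*}
Before starting the inclusions, I would note the following structural fact: because $k=k(\vu)$ is the \emph{minimal} number of constant pieces, the blocks $T_1,\ldots,T_k$ are necessarily intervals of consecutive indices, and $\vu$ takes values $c_1<c_2<\cdots<c_k$ on them with \emph{strict} inequalities between successive levels. (If two adjacent levels were equal, the two blocks could be merged, contradicting minimality.)

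For the inclusion $\mathcal T_{\increasings,\vu}\subseteq \increasingsat_{|T_1|}\times\cdots\times\increasingsat_{|T_k|}$, I would take an arbitrary element $\vv-t\vu$ with $\vv\in\increasings$ and $t\ge 0$, and look at its restriction to each $T_j$. On $T_j$ this restriction equals $\vv_{T_j}-tc_j\mathbf 1$, which is a nondecreasing sequence (the restriction of a nondecreasing sequence to a set of consecutive indices) shifted by a constant, hence nondecreasing. This step is routine.

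For the reverse inclusion, I would take $\boldsymbol w\in\R^n$ with $\boldsymbol w_{T_j}$ nondecreasing for each $j$ and exhibit $\vv\in\increasings$ and $t\ge 0$ such that $\boldsymbol w = \vv-t\vu$. The natural choice is $\vv = \boldsymbol w + t\vu$ with $t$ to be chosen large. Within each block $T_j$, the sequence $\vv_{T_j} = \boldsymbol w_{T_j} + tc_j\mathbf 1$ is nondecreasing by hypothesis on $\boldsymbol w$. Between consecutive blocks, letting $i=\max T_j$ and $i+1=\min T_{j+1}$, the constraint $v_i\le v_{i+1}$ becomes
\begin{equation*}
    w_i - w_{i+1} \le t\,(c_{j+1}-c_j),
\end{equation*}
which holds for all sufficiently large $t$ because $c_{j+1}-c_j>0$ strictly. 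Choosing $t$ larger than the finite maximum of these ratios over $j=1,\ldots,k-1$ yields $\vv\in\increasings$ and proves $\boldsymbol w\in\mathcal T_{\increasings,\vu}$.

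The main conceptual point, and really the only potential pitfall, is recognizing that the \emph{strict} separation $c_{j+1}>c_j$ between successive block levels is exactly what decouples the blocks and allows the tangent cone to decompose as an unrestricted product of cones (with no coupling constraint between the last coordinate of one block and the first of the next). This strict separation is precisely what minimality of $k(\vu)$ provides.
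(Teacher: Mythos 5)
Your proposal is correct and follows essentially the same route as the paper: both use the identity $\mathcal T_{\increasings,\vu}=\{\vv-t\vu:\ \vv\in\increasings,\ t\ge 0\}$ valid for cones, observe the forward inclusion blockwise, and prove the reverse inclusion by writing the target vector as $\vv-t\vu$ with $\vv=\boldsymbol w+t\vu$ and $t$ large enough that the strict jumps of $\vu$ between consecutive blocks absorb the between-block constraints. Your explicit between-block condition $w_i-w_{i+1}\le t(c_{j+1}-c_j)$ makes the choice of $t$ transparent (the paper's stated scaling $t=\inftynorm{\vx}/(4\varepsilon)$ plays the same role, and your version of the bound is if anything cleaner), so there is nothing to add.
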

\begin{proof}
    Let $\mathcal T_{\increasings, \vu} = \mathcal T$ for brevity.
    If $\vu$ is constant, then it is clear that $\mathcal T = \increasings$ so we assume that $\vu$ has at least one jump, i.e., $k(\vu) \ge 2$.
    As $\increasings$ is a cone we have
    $\mathcal T = \{\vv - t\vu| t\ge 0, \vv\in\increasings \}$.
    Thus the inclusion
    $\mathcal T_{\increasings, \vu}
    \subset
    \increasingsat_{|T_1|}
    \times
    ...
    \times
    \increasingsat_{|T_k|}$ is straightforward.
    For the reverse inclusion,
    let $\vx \in \increasingsat_{|T_1|}\times...\times \increasingsat_{|T_k|}$ and
    let $\varepsilon>0$ be the minimal jump of the sequence $\vu$, 
    that is,
    $\varepsilon = \min_{i=1,...,n-1: u_{i+1}>u_i}(u_{i+1} - u_i)$.
    If $t = \inftynorm{\vx} / (4\varepsilon)$ then
    the vector $\vv \coloneqq t\vu + \vx$ belongs to $\increasings$, which completes the proof.
\end{proof}
Using \eqref{eq:delta-product} and \eqref{eq:delta-increasings-logn} we obtain
$\delta(\mathcal T_{\increasings, \vu})
    = \sum_{j=1}^{k(\vu)} \sum_{t=1}^{|T_j|} \frac 1 t
    \le
    \sum_{j=1}^{k(\vu)} \log(e|T_j|)
$.
By Jensen's inequality, this quantity 
is bounded from above by 
$
k(\vu) \log(en / k(\vu))
$.
Applying \Cref{cor:tangent-cone} leads to the following result.

\begin{thm}
    \label{thm:increasings}
    For all $n\ge 2$
    and any $\vmu\in\Rn$,
    \begin{equation}
        \Evmu
        \scalednorms{\hmu^{LS}(\increasings)- \vmu}
        \le
        \min_{\vu\in\increasings}
        \left(
            \scalednorms{\vu - \vmu}
            +
            \frac{\sigma^2 k(\vu)}{n}\log\frac{en}{k(\vu)}
        \right).
        \label{eq:soi-ls}
    \end{equation}
    Furthermore, for any $x>0$ we have
    with probability greater than $1-\exp(-x)$
    \begin{equation}
        \scalednorms{\hmu^{LS}(\increasings)- \vmu}
        \le
        \min_{\vu\in\increasings}
        \left(
            \scalednorms{\vu - \vmu}
            +
            \frac{2\sigma^2 k(\vu)}{n}\log\frac{en}{k(\vu)}
        \right)
        + \frac{4 \sigma^2 x}{n}
        .
        \label{eq:soi-ls-deviation}
    \end{equation}
\end{thm}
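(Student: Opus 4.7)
The plan is a direct composition of \Cref{cor:tangent-cone} with the tangent cone characterization established immediately above the theorem. Fix any $\vu \in \increasings$, let $k = k(\vu)$ and let $(T_1, \ldots, T_k)$ be the partition of $\{1, \ldots, n\}$ into blocks on which $\vu$ is constant. By the preceding proposition, the tangent cone factorizes as $\mathcal{T}_{\increasings, \vu} = \increasingsat_{|T_1|} \times \cdots \times \increasingsat_{|T_k|}$.

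Using the additivity property \eqref{eq:delta-product} of the statistical dimension across cartesian products, and then the bound $\delta(\increasingsat_m) \le \log(em)$ from \eqref{eq:delta-increasings-logn} applied blockwise, I get
\begin{equation*}
    \delta(\mathcal{T}_{\increasings, \vu})
    = \sum_{j=1}^{k} \delta(\increasingsat_{|T_j|})
    \le \sum_{j=1}^{k} \log(e|T_j|).
\end{equation*}
Since $\sum_{j=1}^k |T_j| = n$, concavity of the logarithm (Jensen's inequality applied to the uniform distribution over the blocks) yields $\sum_{j=1}^{k} \log(e|T_j|) \le k \log(en/k)$, so $\delta(\mathcal{T}_{\increasings, \vu}) \le k(\vu) \log(en/k(\vu))$.

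Now apply \Cref{cor:tangent-cone} with $K = \increasings$. The expectation part of the corollary directly gives
\begin{equation*}
    \Evmu \scalednorms{\ls(\increasings) - \vmu}
    \le \scalednorms{\vu - \vmu} + \frac{\sigma^2 k(\vu)}{n} \log \frac{en}{k(\vu)},
\end{equation*}
and taking the infimum of the right-hand side over $\vu \in \increasings$ establishes \eqref{eq:soi-ls}. For the deviation inequality \eqref{eq:soi-ls-deviation}, observe that the right-hand side of the claimed bound is a deterministic function of $\vmu$ only; I may therefore fix a minimizer $\vu^*$ in advance and apply the high-probability part of \Cref{cor:tangent-cone} at this single $\vu^*$, which gives excess risk at most $\sigma^2 n^{-1}(2\delta(\mathcal{T}_{\increasings, \vu^*}) + 4x)$ on an event of probability at least $1 - e^{-x}$, and the entropy bound above finishes the job.

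There is no genuine obstacle: all the serious work has been done in \Cref{prop:bound-tangent}, \Cref{cor:tangent-cone}, the exact formula \eqref{eq:delta-sum1overk} together with \eqref{eq:delta-increasings-logn}, and the tangent cone factorization. The only point worth being careful about is that, for the deviation statement, the minimum over $\vu \in \increasings$ is purely deterministic, so no union bound over $\vu$ is needed and the leading constants in \eqref{eq:soi-ls-deviation} are preserved.
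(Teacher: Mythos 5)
Your proposal is correct and follows essentially the same route as the paper: the tangent cone factorization, the statistical dimension bound $\delta(\mathcal T_{\increasings,\vu})\le k(\vu)\log(en/k(\vu))$ via \eqref{eq:delta-product}, \eqref{eq:delta-increasings-logn} and Jensen's inequality, and then \Cref{cor:tangent-cone} applied in expectation and, at a fixed deterministic minimizer, in deviation. Your remark that no union bound over $\vu$ is needed for \eqref{eq:soi-ls-deviation} is exactly the point that preserves the constants, just as in the paper.
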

Let us discuss some features of \Cref{thm:increasings}
that are new.
First, the estimator $\ls(\increasings)$ satisfies oracle inequalities 
both in deviation 
with exponential probability bounds 
and in expectation, cf. \eqref{eq:soi-ls-deviation} and \eqref{eq:soi-ls}, respectively.
Previously known oracle inequalities for the LS estimator over $\increasings$
were only proved in expectation.

Second,
both \eqref{eq:soi-ls} and \eqref{eq:soi-ls-deviation}
 are sharp oracle inequalities, i.e., with leading constant 1. 
Although sharp oracle inequalities were obtained using aggregation
methods
\cite{bellec2015sharp},
this is the first known sharp oracle inequality for the LS estimator $\ls(\increasings)$.

Third, the assumption $\vmu\in\increasings$ is not needed,
as opposed to the result of \cite{chatterjee2013risk}.

Last,
the constant $1$ in front
of
$\frac{\sigma^2 k(\vu)}{n}\log\frac{en}{k(\vu)}$ in \eqref{eq:soi-ls}
is optimal for the LS estimator.
To see this,
assume that there exists an absolute constant $c<1$ such that
for all $\vmu\in\increasings$
and $\hmu = \ls(\increasings)$,
\begin{equation}
    \Evmu 
    \scalednorms{\hmu - \vmu}
    \le
    \min_{\vu\in\increasings}
    \left(
        \scalednorms{\vu - \vmu}
        +
        \frac{c \sigma^2 k(\vu)}{n}\log\frac{en}{k(\vu)}
    \right).
    \label{eq:constant-c-impossible}
\end{equation}
Set $\vmu=0$.
Thanks to \eqref{eq:delta-increasings-logn},
the left hand side of the above display is
bounded from below by $\sigma^2 \log(n)/n$
while
while the right hand side is equal to $c\sigma^2 \log(en)/n$.
Thus, it is impossible to improve the constant
in front of
$\frac{\sigma^2 k(\vu)}{n}\log\frac{en}{k(\vu)}$ for the estimator $\ls(\increasings)$.
However, it is still possible that for another estimator $\hmu$,
\eqref{eq:constant-c-impossible} holds with $c<1$ or without the logarithmic factor.
We do not know whether such an estimator exists.

We now highlight the adaptive behavior of the estimator
$\ls(\increasings)$.
Let $\vu^*\in\increasings$
be a minimizer
of the right hand side of \eqref{eq:soi-ls}.
Let $k=k(\vu^*)$ and let $(T_1,...,T_k)$ be a partition of $\{1,...,n\}$
such that $\vu^*$ is constant on all $T_j$, $j=1,..,k$.
Given $T_1,...,T_k$,
consider the piecewise constant oracle
\begin{equation}
\hmuora
\in \argmin_{\vu \in W_{T_1,...,T_k}}
\scalednorms{\vy - \vu},
\end{equation}
where $W_{T_1,...,T_k}$ is the linear subspace of all sequences
that are constant on all $T_j$, $j=1,...,k$.
This subspace has dimension $k$,
so the estimator $\hmuora$ satisfies
\begin{equation}
    \Evmu 
    \scalednorms{\hmuora - \vmu}
    =
    \min_{\vu\in W_{T_1,...,T_k}}
    \scalednorms{\vu-\vmu}
    +
    \frac{\sigma^2 k}{n}
    \le
    \scalednorms{\vu^*-\vmu}
    +
    \frac{\sigma^2 k}{n}.
\end{equation}
Thus, \eqref{eq:soi-ls}
can be interpreted in the sense
that
without the knowledge of $T_1,...,T_k$,
the performance of
$\ls(\increasings)$ 
is similar to that of
$\hmuora$
up to the factor $\log(en/k)$.
Of course, the knowledge of $T_1,...,T_k$ is not accessible in practice,
so $\hmuora$ is an oracle that can only serve
as a benchmark.
This adaptive behavior of $\ls(\increasings)$ was observed in \cite{chatterjee2013risk}.

The following results are direct consequences
of \Cref{thm:isomorphic},
Dudley integral bound
and the entropy bounds from \cite{gao2007entropy,chatterjee2014new}.

\begin{cor}
    \label{cor:rate23}
    There exists an absolute constant $c>0$ such that the following holds.
    Let $n\ge 2$ and $\vmu\in\Rn$.
    Assume that $\vxi\sim \mathcal{N}(0, \sigma^2 I_{n\times n})$.
    Then for any $x>0$,
    with probability greater than $1-\exp(-x)$,
    \begin{equation}
        \label{eq:soi-increasing-uniform}
        \scalednorms{
            \ls(\increasings)
            -
            \vmu
        }
        \le
        \min_{\vu\in\increasings}
        \left[
            \scalednorms{
                \vu
                -
                \vmu
            }
            + 2c \sigma^2 \left(\frac{\sigma + V(\vu)}{\sigma n}\right)^{2/3}
        \right]
        + \frac{4\sigma^2 x}{n},
    \end{equation}
    where $V(\cdot)$ is defined in \eqref{eq:def-V}.
\end{cor}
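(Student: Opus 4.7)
\textbf{Proof plan for \Cref{cor:rate23}.}

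The plan is to apply \Cref{thm:isomorphic} at each $\vu\in\increasings$ and then take the infimum. The target is to exhibit a quantity
\[
t_*(\vu)^2/n \;=\; c_0\,\sigma^2\bigl((\sigma+V(\vu))/(\sigma n)\bigr)^{2/3}
\]
for some absolute constant $c_0>0$ that satisfies the fixed-point condition \eqref{eq:fixed-point-C}. Once this is achieved, \Cref{thm:isomorphic} yields $\scalednorms{\ls(\increasings)-\vmu}\le\scalednorms{\vu-\vmu}+(2t_*(\vu)^2+4\sigma^2 x)/n$ with probability at least $1-e^{-x}$, and taking the infimum over $\vu\in\increasings$ produces \eqref{eq:soi-increasing-uniform}.

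The first step is to control the localized Gaussian width
\[
E(t)\;:=\;\E\sup_{\vv\in\increasings,\;\euclidnorm{\vv-\vu}\le t}\vxi^T(\vv-\vu)
\]
via Dudley's entropy integral. Any $\vv\in\increasings$ with $\euclidnorm{\vv-\vu}\le t$ satisfies $|v_1-u_1|,|v_n-u_n|\le t$, so its range is at most $V(\vu)+2t$. The Gao--Wellner estimate \cite{gao2007entropy,chatterjee2014new} gives the entropy bound $\log N(\eta,\cdot,\scalednorm{\cdot})\le CR/\eta$ for monotone sequences of range at most $R$; applying Dudley's integral (after passing from $\scalednorm{\cdot}$ to $\euclidnorm{\cdot}$) leads to
\[
E(t)\;\le\;C\sigma\,n^{1/4}(V(\vu)+t)^{1/2}t^{1/2}.
\]

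In the regime $V(\vu)\gtrsim \sigma n^{1/4}$ one expects $t_*(\vu)\le V(\vu)$, so $(V(\vu)+t)^{1/2}\le(2V(\vu))^{1/2}$; solving $E(t)\le t^2/2$ then gives $t_*(\vu)\sim\sigma^{2/3}V(\vu)^{1/3}n^{1/6}$, which matches the target. For the complementary regime $V(\vu)\lesssim\sigma n^{1/4}$ where $\vu$ is nearly constant, the naive Dudley bound is insufficient, and I would approximate $\vu$ by a piecewise constant $\vu^*\in\increasings$ with $k$ pieces. Using the containment $\{\vv-\vu^*:\vv\in\increasings\}\subset\mathcal T_{\increasings,\vu^*}$ and the cone identity $\E\sup_{\vw\in L,\,\euclidnorm{\vw}\le s}\vxi^T\vw = s\sigma\sqrt{\delta(L)}$, together with the tangent-cone computation from \Cref{s:isotonic} giving $\delta(\mathcal T_{\increasings,\vu^*})\le k\log(en/k)$, one obtains $E(t)\le (t+\euclidnorm{\vu-\vu^*})\sigma\sqrt{k\log(en/k)}$; optimizing $k$ (together with the bound $\euclidnorm{\vu-\vu^*}\lesssim \sqrt n V(\vu)/k$ from \cite[Lemma~2]{bellec2015sharp}) yields a fixed-point value $t_*(\vu)$ whose $t_*(\vu)^2/n$ stays of order $\sigma^2 n^{-2/3}$ up to logarithmic slack, which in the small-$V$ regime is dominated by the target $\sigma^2((\sigma+V(\vu))/(\sigma n))^{2/3}$ after enlarging the absolute constant.

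The main obstacle is ensuring the $n^{-2/3}$ rate uniformly in $\vu$: Dudley's entropy bound alone becomes too loose once $V(\vu)\lesssim\sigma n^{1/4}$ (it degrades to $\sigma^2/\sqrt n$, slower than $\sigma^2 n^{-2/3}$), so the small-variation regime must be handled by the tangent-cone / statistical-dimension argument and the two bounds patched together with absolute constants.
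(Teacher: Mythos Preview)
Your high–level plan—apply \Cref{thm:isomorphic} at each $\vu\in\increasings$ after verifying the fixed–point condition \eqref{eq:fixed-point-C}—is exactly what the paper does. The difference is that the paper does not attempt to manufacture the fixed–point bound from scratch: it invokes the lemma \eqref{eq:chaining-chatterjee} from \cite{chatterjee2014new}, which asserts directly that
\[
\E\sup_{\vv\in\increasings:\,\euclidnorm{\vv-\vu}\le t}\vxi^T(\vv-\vu)\le t^2/2
\qquad\text{for all }t\ge c\,\sigma\bigl(1+V(\vu)/\sigma\bigr)^{1/3}n^{1/6},
\]
uniformly over all $V(\vu)\ge 0$. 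One then sets $t_*(\vu)=c\,\sigma(1+V(\vu)/\sigma)^{1/3}n^{1/6}$ and reads off $t_*(\vu)^2/n=c^2\sigma^2((\sigma+V(\vu))/(\sigma n))^{2/3}$, which is exactly the remainder in \eqref{eq:soi-increasing-uniform}. No case distinction on the size of $V(\vu)$ is needed.

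Your attempted re–derivation of this lemma has a genuine gap in the small–variation regime. The tangent–cone / piecewise–constant patch gives
\[
E(t)\le\bigl(t+\sqrt n\,V(\vu)/(2k)\bigr)\,\sigma\sqrt{k\log(en/k)},
\]
and optimizing over $k$ (balancing the two terms) yields $t_*$ of order $\sigma(V(\vu)/\sigma)^{1/3}n^{1/6}(\log n)^{1/3}$; for instance, at $V(\vu)\sim\sigma$ the balance occurs at $k\sim n^{1/3}$ and both terms are of size $\sigma n^{1/6}\sqrt{\log n}$. Hence $t_*(\vu)^2/n$ is of order $\sigma^2((\sigma+V(\vu))/(\sigma n))^{2/3}(\log n)^{2/3}$. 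The extra $(\log n)^{2/3}$ factor is not an artifact of bookkeeping: it comes from the $\log$ in $\delta(\increasings)$ and cannot be absorbed into an absolute constant, contrary to what you write. So your patch would prove the corollary only with an additional logarithmic factor, not as stated. To obtain the log–free version you must either cite \eqref{eq:chaining-chatterjee} as the paper does, or reproduce the chaining argument of \cite{chatterjee2014new}, which handles the localized set more delicately than the crude range bound $V(\vv)\le V(\vu)+2t$ followed by Dudley.
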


The novelty of 
\Cref{cor:rate23}
is twofold.
First, the leading constant is $1$.
Although model misspecification was considered in \cite{zhang2002risk,guntuboyina2013global},
no oracle inequalities were obtained.
Second, the above sharp oracle inequality holds in deviation,
whereas the previous work derived upper bounds
on the expected squared risk in the well-specified case.
Note that one can derive sharp oracle inequality in expectation
by integration of \eqref{eq:soi-increasing-uniform}.

\section{Convex regression and arbitrary design points}
\label{s:convex}
The goal of this section is to study univariate convex regression
for non-equispaced design points.

\subsection{Parametric rate for any design if $\vmu$ has few affine pieces}
We now present a new argument to bound from above the
statistical dimension of the cone of convex sequences.

\begin{thm}
    \label{thm:delta-convex}
    Let $n\ge 3$. Let $x_1<...<x_n$ be real numbers
    and consider the cone 
    $\K^C_{x_1,...,x_n}$ defined in \eqref{eq:def-convex-x_i}.
    Let
    $\vg\sim\mathcal N (\vzero,I_{n\times n})$.
    Then
    \begin{equation}
        \delta(\K^C_{x_1,...,x_n})
        =
        \E \left[ \euclidnorms{\Pi_{\K^C_{x_1,...,x_n}}(\vg)} \right]
        \le 8 \log(en).
        \label{eq:bound-delta-convex}
    \end{equation}
\end{thm}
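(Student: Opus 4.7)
The plan is to reduce the estimate to a count of affine pieces in the convex LS projection of pure Gaussian noise, via a slope parametrization and a degrees-of-freedom identity. Let me abbreviate $K = \K^C_{x_1,\ldots,x_n}$. I would parametrize $\vu \in K$ by the pair $(u_1, s_2, \ldots, s_n)$ with $s_i = (u_i - u_{i-1})/(x_i - x_{i-1})$. By \eqref{eq:def-convex-x_i} this slope map is a linear bijection from $K$ onto $\R \times \increasingsat_{n-1}$, and the number of affine pieces $q(\vu)$ equals the number of distinct values of $(s_2, \ldots, s_n)$. A piecewise-affine $\vu \in K$ with $q(\vu)$ pieces lies in the relative interior of a face $F_{\vu}$ of $K$ of dimension exactly $q(\vu) + 1$ (two parameters per affine piece minus $q(\vu) - 1$ continuity constraints at the kinks). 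Stein's identity for projections onto polyhedral cones then yields
$$
\delta(K) = \E[\dim F_{\hat{\vu}}] = \E[q(\hat{\vu}) + 1],
$$
where $\hat{\vu} = \Pi_K(\vg)$, so the task reduces to showing $\E[q(\hat{\vu})] \le 8 \log(en) - 1$.

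Next, I would control $\E[q(\hat{\vu})]$ by a peeling argument over partitions. On the event $\{\hat{\vu} \text{ has pieces } (T_1, \ldots, T_Q)\}$, $\hat{\vu}$ equals the orthogonal projection of $\vg$ onto the $(Q+1)$-dimensional subspace $V_T$ of sequences affine on each $T_j$ with continuity at the kinks, so conditionally $\|\hat{\vu}\|^2 \sim \chi^2_{Q+1}$. Combining the combinatorial count $\binom{n-1}{Q-1} \le (en/Q)^{Q-1}$ of ordered partitions of $\{1, \ldots, n\}$ into $Q$ pieces with sub-exponential tails for $\chi^2$ and a union bound stratified over $Q \in \{1, \ldots, n\}$, I would derive a deviation inequality of the form $\|\hat{\vu}\|^2 \le C\bigl(q(\hat{\vu}) + 1\bigr) \log(en)$ on a high-probability event; substituting into the degrees-of-freedom identity gives the stated bound $\delta(K) \le 8 \log(en)$.

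The main obstacle is the self-referential coupling between $\E[\|\hat{\vu}\|^2]$ and $\E[q(\hat{\vu})+1]$: the peeling inequality $\|\hat{\vu}\|^2 \lesssim (q(\hat{\vu})+1) \log(en)$ bounds one random quantity in terms of another. Extracting $\E[q(\hat{\vu})] \lesssim \log(en)$ with the explicit constant $8$ requires a careful choice of the tail parameter in the $\chi^2$ concentration, or an iteration argument. Crucially, the combinatorial count $\binom{n-1}{Q-1}$ depends only on the index set $\{1,\ldots,n\}$ and not on the spacings $\Delta_i = x_i - x_{i-1}$, which is the source of the design-independence absent from the Dudley-entropy approach of \cite{guntuboyina2013global} leading to \eqref{eq:bound-delta-54}.
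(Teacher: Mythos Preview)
Your degrees-of-freedom identity $\delta(K)=\E[q(\hat{\vu})+1]$ is correct for this polyhedral cone, but the remainder of the argument is circular in a way that cannot be repaired by adjusting tail parameters or iterating. The peeling bound you propose has the form $\|\hat{\vu}\|^2\le C\,(q(\hat{\vu})+1)\log(en)$ on a high-probability event. Taking expectations and handling the complement gives $\E\|\hat{\vu}\|^2 \le C\log(en)\,\E[q(\hat{\vu})+1]+o(1)$. But by the very identity you invoke, $\E\|\hat{\vu}\|^2=\E[q(\hat{\vu})+1]=\delta(K)$, so what you obtain is $\delta(K)\le C\log(en)\,\delta(K)+o(1)$, which is vacuous. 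The two sides are literally the same number; no choice of constants or bootstrapping extracts a nontrivial bound from an inequality of the form $x\le Cx\log n$. To break the loop you would need an \emph{independent} upper bound on either $\E\|\hat{\vu}\|^2$ or $\E[q(\hat{\vu})]$, and the peeling over partitions does not supply one: the $\chi^2$ tail controls $\|P_{V_T}\vg\|^2$ for each fixed $T$, but summing $\binom{n-1}{Q-1}$ such terms over all $Q$ without any a priori control on $q(\hat{\vu})$ just reproduces the quantity you are trying to bound.

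The paper's proof sidesteps this entirely by a geometric inclusion rather than a face-counting argument. Since every convex sequence is unimodal, $K\subset\bigcup_{m=1}^n K_m$ where $K_m=\mathcal S^\downarrow_m\times\mathcal S^\uparrow_{n-m}$, and each $K_m$ has $\delta(K_m)\le 2\log(en)$ by \eqref{eq:delta-product} and \eqref{eq:delta-increasings-logn}. This gives the pointwise bound $\|\Pi_K(\vg)\|\le\max_m\|\Pi_{K_m}(\vg)\|$, and a union bound over the $n$ cones $K_m$ combined with the Gaussian concentration \eqref{eq:concentration-delta-not-squared} yields $\delta(K)\le 8\log(en)$ directly. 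The key is that the bound on $\delta(K_m)$ is obtained \emph{externally}, from the known statistical dimension of monotone cones, not self-referentially from properties of $\Pi_K$ itself. Your observation that the argument should be design-independent is right, and this is exactly what the unimodal inclusion delivers, since neither $K_m$ nor $\delta(K_m)$ depends on $x_1,\ldots,x_n$.
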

\begin{proof}
    Let $K = \K^C_{x_1,...,x_n}$ for brevity.
    A convex sequence $\vu=(u_1,...,u_n)\in K$ is first non-increasing and then nondecreasing,
    that is,
    there exists $m\in\{1,...,n\}$ such that
    $u_1\ge u_2 \ge ... \ge u_m \le u_{m+1} \le ... \le u_n$,
    hence the sequence $\vu$ is unimodal.
    Thus, if $K_m,m=1,...,n$
    are the sets defined in \eqref{eq:def-Km-unimodal},
    then $K \subset \mathcal U = \cup_{m=1,...,n} K_m $.
    Using \eqref{eq:delta-product}, \eqref{eq:delta-inclusion} and \eqref{eq:delta-increasings-logn} 
    we obtain 
    \begin{equation}
        \delta(K_m) \le \delta(\mathcal S^\downarrow_m \times \mathcal S^\uparrow_{n-m}) \le  \log(em) + \log(e(n-m)) \le 2 \log(en).
    \end{equation}
    By \eqref{eq:equivalence-delta}, almost surely we have
    \begin{equation}
        0
        \le
    \euclidnorm{\Pi_K(\vg)}
    =
    \sup_{\vu\in K: \euclidnorm{\vu}\le 1}
        \vg^T \vu
    \le
    \max_{m=1,...,n}
    \sup_{\vu\in K_m: \euclidnorm{\vu}\le 1}
        \vg^T \vu
    =
    \max_{m=1,...,n}
    \euclidnorm{\Pi_{K_m}(\vg)}
    .
    \end{equation}
    Using \eqref{eq:concentration-delta-not-squared}
    and the union bound,
    for all $x>0$,
    we have with probability at least $1-e^{-x}$ the inequality
    $
    \euclidnorms{\Pi_K(\vg)}
    \le 
    \max_{m=1,...,n}
    \delta(K_m)^{1/2}+\sqrt{2(x+\log n)}$.
    As $(a+b)^2\le 2a^2 + 2b^2$, on the same event of probability at least $1-e^{-x}$
    we have
    \begin{equation}
        \euclidnorms{\Pi_K(\vg)}
        \le
        2 \max_{m=1,...,n}
        \delta(K_m)
        + 4 (x+\log n)
        \le 4 \log(en) + 4(x + \log n)
        .
    \end{equation}
    Integration of this probability bound completes the proof.
\end{proof}

Remarkably, this bound on the statistical dimension
does not depend on the design points $x_1,...,x_n$.
Furthermore, the bound 
\eqref{eq:bound-delta-convex}
improves 
upon
\eqref{eq:bound-delta-54}
as the exponent $5/4$ is reduced to 1.
\eqref{eq:bound-delta-convex}

\begin{prop}
    \label{prop:tangent-cone-convex}
    Let $n\ge 3$, and let $\vu$
    be an element of the cone
    $\K^C_{x_1,...,x_n}$ defined in \eqref{eq:def-convex-x_i}.
    The statistical dimension of the tangent cone
    at $\vu$ satisfies
    \begin{equation}
        \delta(\mathcal T_{\K^C_{x_1,...,x_n},\vu})
        \le
        8 q(\vu) \log\left(\frac{en}{q(\vu)}\right).
    \end{equation}
\end{prop}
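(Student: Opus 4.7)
My plan follows the same template used for the tangent cone of $\increasings$ earlier in the paper. The first observation is that $\K^C_{x_1,\dots,x_n}$ is itself a closed convex cone (the inequalities in \eqref{eq:def-convex-x_i} are preserved under positive scaling and under sums), so the tangent cone admits the simple description
$\mathcal T_{\K^C_{x_1,\dots,x_n},\vu} = \{\vv - t\vu : t\ge 0,\ \vv \in \K^C_{x_1,\dots,x_n}\}$.

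Fix a minimal partition $(T_1,\dots,T_q)$ of $\{1,\dots,n\}$ witnessing $q=q(\vu)$ as in \eqref{eq:piecewise-affine}. I may take each $T_j$ to be a block of consecutive integers: since $\vu$ is convex, the slopes $(u_{i+1}-u_i)/(x_{i+1}-x_i)$ are non-decreasing in $i$, and grouping indices according to maximal runs on which this slope is constant produces a consecutive-interval partition with exactly $q(\vu)$ pieces. The key structural inclusion I will then establish is
\begin{equation}
    \mathcal T_{\K^C_{x_1,\dots,x_n},\vu}
    \subset
    \K^C_{\{x_i : i\in T_1\}}
    \times\cdots\times
    \K^C_{\{x_i : i\in T_q\}}.
\end{equation}
Indeed, for any $\vv \in \K^C_{x_1,\dots,x_n}$ and $t\ge 0$, the restriction of $\vu$ to $T_j$ is affine in the design $\{x_i: i\in T_j\}$, so $-t\vu|_{T_j}$ is convex with respect to this design; and since $T_j$ is a block of consecutive indices, the discrete convexity inequalities defining $\K^C_{x_1,\dots,x_n}$ at every interior index of $T_j$ restrict to $\vv|_{T_j}$ unchanged, so $\vv|_{T_j}$ is convex on the same design. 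The sum $(\vv - t\vu)|_{T_j}$ is therefore convex, establishing the inclusion.

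Combining monotonicity \eqref{eq:delta-inclusion} with the product formula \eqref{eq:delta-product} and applying \Cref{thm:delta-convex} to each factor yields
\begin{equation}
    \delta(\mathcal T_{\K^C_{x_1,\dots,x_n},\vu})
    \le \sum_{j=1}^{q} \delta\bigl(\K^C_{\{x_i:i\in T_j\}}\bigr)
    \le 8 \sum_{j=1}^{q} \log(e|T_j|),
\end{equation}
where blocks with $|T_j|\in\{1,2\}$ can be handled by the trivial identification $\K^C_{\{x_i:i\in T_j\}}=\R^{|T_j|}$, for which the obvious bound $|T_j|\le 8\log(e|T_j|)$ still fits. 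Finally, because $\sum_j |T_j|=n$, Jensen's inequality applied to the concave function $\log$ gives $\sum_{j=1}^{q}\log(e|T_j|)\le q\log(en/q)$, yielding the announced bound.

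The only nontrivial step is the tangent cone inclusion: one must be careful both that a minimizing partition for a convex sequence can be chosen to consist of consecutive blocks (so that discrete convexity on $\{1,\dots,n\}$ restricts correctly to each block) and that adding $-t\vu|_{T_j}$ preserves convexity in the block because it is affine there. Once this structural fact is in place, the remainder is a direct application of already-established properties of $\delta(\cdot)$ combined with \Cref{thm:delta-convex}.
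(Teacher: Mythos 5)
Your proposal is, in substance, the paper's own proof: the key inclusion $\mathcal T_{\K^C_{x_1,\dots,x_n},\vu}\subset \K^C_{x_i:i\in T_1}\times\cdots\times\K^C_{x_i:i\in T_q}$ (convex minus affine is convex on each piece), followed by \eqref{eq:delta-inclusion}, \eqref{eq:delta-product}, \Cref{thm:delta-convex} and Jensen's inequality. Your explicit handling of pieces with $|T_j|\le 2$ (where $\K^C_{x_i:i\in T_j}=\R^{|T_j|}$ and $\delta=|T_j|\le 8\log(e|T_j|)$) is a small refinement that the paper leaves implicit, since \Cref{thm:delta-convex} is stated for $n\ge 3$.

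The one step that is not correct as stated is the claim that grouping indices into maximal runs of constant slope yields a consecutive partition with exactly $q(\vu)$ pieces. That construction produces one piece per maximal run, i.e., one more than the number of strict slope increases, which can strictly exceed $q(\vu)$: for $x=(0,1,2,3)$ and $\vu=(0,0,1,3)$ there are three maximal runs, yet $q(\vu)=2$ via the partition $\{1,2\},\{3,4\}$ (any two points are affine). Used literally, your partition could have more than $q(\vu)$ blocks and the final bound would degrade. Fortunately, consecutiveness is not needed anywhere in the argument: any $\vv\in\K^C_{x_1,\dots,x_n}$ is the evaluation of a convex function at the design points, so its restriction to an \emph{arbitrary} index set $T_j$ belongs to $\K^C_{x_i:i\in T_j}$ (equivalently, for $i<m<l$ in $T_j$ the slope from $i$ to $m$ is at most the slope from $m$ to $l$). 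Taking any partition witnessing $q(\vu)$ in \eqref{eq:piecewise-affine}, consecutive or not, and invoking this restriction property, which is exactly what the paper does, repairs the argument; everything else in your proof then goes through unchanged.
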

\begin{proof}
    Let $q=q(\vu)$.
    Let $(T_1,...,T_q)$ be a partition of $\{1,...,n\}$
    such that $\vu$ is affine on each $T_j,j=1,...,q$.
    Let $\vx\in\K^C_{x_1,...,x_n}$.
    A convex sequence minus an affine sequence is convex, thus
    for all $j=1,...,q$, $(\vx - \vu)_{T_j}$
    is convex in the sense that it belongs to
    $\K^C_{x_i:i\in T_j}$. Thus
    \begin{equation}
        \mathcal T_{\K^C_{x_1,...,x_n},\vu}
        \subset
        \C \coloneqq 
        \K^C_{x_i:i\in T_1}
        \times
        \K^C_{x_i:i\in T_2}
        \times
        ...
        \times
        \K^C_{x_i:i\in T_q}.
    \end{equation}
    Using \eqref{eq:delta-inclusion},
    \eqref{eq:delta-product}, \Cref{thm:delta-convex} and Jensen's inequality we have
    \begin{equation}
        \delta\left(
        \mathcal T_{\K^C_{x_1,...,x_n},\vu}
        \right)
        \le
        \delta(\C)
        \le \sum_{j=1}^q 8 \log(e|T_j|)
        \le 
        8 q \log\left(\frac e q \sum_{j=1}^q |T_j|\right)
        =
        8 q \log\left(\frac{en}{q} \right).
    \end{equation}
\end{proof}
Combining \Cref{cor:tangent-cone} and \Cref{prop:tangent-cone-convex}
yields the following.
\begin{thm}
    \label{thm:soi-for-convex}
    Let $n\ge 3$
    and $\vmu\in\Rn$.
    Let $x_1<...<x_n$ be real numbers.
    Then for any $x>0$, the estimator $\hmu=\hmu^{LS}(\K^C_{x_1,...,x_n})$
    satisfies
    \begin{equation}
        \scalednorms{\hmu - \vmu}
        \le
        \min_{\vu\in\K^C_{x_1,...,x_n}}
        \left(
            \scalednorms{\vu - \vmu}
            +
            \frac{ 16 \sigma^2 q(\vu)}{n}\log\frac{en}{q(\vu)}
        \right)
        + \frac{4 \sigma^2 x}{n}
    \end{equation}
    with probability greater than $1-\exp(-x)$.
\end{thm}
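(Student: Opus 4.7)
The proof will combine the two tools developed above. The plan is to apply \Cref{cor:tangent-cone} to the closed convex cone $K = \K^C_{x_1,\dots,x_n}$ and then control the resulting statistical dimension of the tangent cone via \Cref{prop:tangent-cone-convex}.

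Concretely, I would first fix a deterministic minimizer (or, if the infimum is not attained, a near-minimizer and then let the approximation parameter tend to zero)
\begin{equation*}
\vu^\star \in \argmin_{\vu \in \K^C_{x_1,\dots,x_n}} \left( \scalednorms{\vu - \vmu} + \frac{16 \sigma^2 q(\vu)}{n} \log \frac{en}{q(\vu)} \right).
\end{equation*}
The key point is that $\vu^\star$ is chosen before looking at the data: the objective depends only on $\vu$ and $\vmu$, so $\vu^\star$ is independent of the noise $\vxi$.

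Next, I would apply \Cref{cor:tangent-cone} with the convex set $K = \K^C_{x_1,\dots,x_n}$ at the anchor $\vu^\star$. This delivers, on an event of probability at least $1 - e^{-x}$,
\begin{equation*}
\scalednorms{\hmu - \vmu} - \scalednorms{\vu^\star - \vmu} \le \frac{\sigma^2}{n}\bigl(2\,\delta(\mathcal T_{K,\vu^\star}) + 4x\bigr).
\end{equation*}
Inserting the bound $\delta(\mathcal T_{K,\vu^\star}) \le 8\, q(\vu^\star) \log(en/q(\vu^\star))$ from \Cref{prop:tangent-cone-convex} and recalling the definition of $\vu^\star$ turns the right hand side into exactly the expression appearing in the theorem, completing the argument.

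The result is thus a direct corollary of the two ingredients \Cref{cor:tangent-cone} and \Cref{prop:tangent-cone-convex}, so I do not expect any real obstacle. The only point that deserves a little care is that \Cref{cor:tangent-cone} is a one-point deviation bound; attempting to union-bound over all $\vu \in K$ would be wasteful and would destroy the leading constant $1$ in front of $\scalednorms{\vu - \vmu}$. Choosing $\vu^\star$ as the deterministic minimizer of the (deterministic) oracle functional bypasses this issue and yields the sharp form of the inequality directly.
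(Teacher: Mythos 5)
Your proposal is correct and matches the paper's own argument: the theorem is obtained exactly by applying \Cref{cor:tangent-cone} at a fixed (deterministic) oracle point $\vu$ and plugging in the bound $\delta(\mathcal T_{\K^C_{x_1,...,x_n},\vu})\le 8\,q(\vu)\log(en/q(\vu))$ from \Cref{prop:tangent-cone-convex}, which yields the constants $16$ and $4x$ as you computed. Your remark about anchoring at a single deterministic minimizer rather than union-bounding over $\vu$ is the right reading of how the corollary is used.
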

\Cref{thm:soi-for-convex}
does not depend on the design points $x_1,...,x_n$.
In particular, \Cref{thm:soi-for-convex}
and the corresponding result in expectation
hold for non-equispaced design points and
design points that can be arbitrarily close to each other.
This improves upon the oracle inequality
\eqref{eq:convexs-adapt-example-oi}
proved in \cite{guntuboyina2013global,chatterjee2013risk}
where $C$ is strictly greater than 1 and 
depends on the design points through the ratio
\eqref{eq:ratio}.
Thanks to \Cref{cor:tangent-cone},
these oracle inequalities
hold
in deviation
with exponential probability bounds
and in expectation for any $\vmu\in\Rn$,
whereas previously known oracle inequalities
from \cite{guntuboyina2013global,chatterjee2013risk}
only hold in expectation
under the additional assumption that $\vmu\in\K^C_{x_1,...,x_n}$.

\subsection{Worst-case design points in convex regression and the rate $n^{-2/3}$}
\label{ss:convex-unimodal}
The nonparametric rate for estimation of convex sequences
is of order $n^{-4/5}$ for equispaced design points.
This was established in \cite{guntuboyina2013global}
using metric entropy bounds.
The following result combines the metric entropy bounds from \cite{guntuboyina2013global} with \Cref{thm:isomorphic}.

\begin{cor}
    \label{cor:rate45}
    There exist absolute constants $\kappa, C>0$ such that the following holds.
    Let $n\ge 3$ and $\vmu\in\Rn$.
    Assume that $\vxi\sim \mathcal{N}(0, \sigma^2 I_{n\times n})$.
    Then for any $x>0$,
    with probability greater than $1-\exp(-x)$,
    \begin{equation}
        \scalednorms{
            \ls(\convexs)
            -
            \vmu
        }
        \le
        \min_{\vu\in\convexs}
        \left[
            \scalednorms{
                \vu
                -
                \vmu
            }
            + \frac{C \left(R_{\vu} \sigma^4\right)^{2/5}   \log(en)   }{n^{4/5}}
        \right]
        + \frac{16\sigma^2 x}{n}.
    \end{equation}
    where $R_{\vu} = \max(\sigma, \min(\{ \scalednorm{\vu - \tau}, \tau \in\Rn \text{ and } \tau \text{ is affine} \}))$,
    and the minimum on the right hand side is taken over all $\vu\in\convexs$ such that
    \begin{equation}
    \label{eq:rate45condition}
        n R_{\vu}^2 \ge \kappa \log(en)^{5/4},
    \end{equation}
\end{cor}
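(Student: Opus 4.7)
The plan is to apply \Cref{thm:isomorphic} with $K = \convexs$ and to construct $t_*(\vu) > 0$ satisfying \eqref{eq:fixed-point-C} via a Dudley entropy integral argument. Once this is done, \Cref{thm:isomorphic} directly yields, with probability at least $1 - e^{-x}$,
\[
\scalednorms{\ls(\convexs) - \vmu} - \scalednorms{\vu - \vmu} \le \frac{2 t_*(\vu)^2 + 4 \sigma^2 x}{n},
\]
and taking the infimum over $\vu \in \convexs$ subject to \eqref{eq:rate45condition} produces the stated oracle inequality.

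The core step is to upper-bound the localized Gaussian width by Dudley's entropy integral,
\[
\E \sup_{\vv \in \convexs:\, \euclidnorm{\vv - \vu} \le t} \vxi^T(\vv - \vu) \le C \sigma \int_0^t \sqrt{\log N\!\left(\epsilon,\, \convexs \cap B(\vu, t),\, \euclidnorm{\cdot}\right)}\, d\epsilon,
\]
and then to invoke the metric entropy estimates of \cite{guntuboyina2013global} for bounded classes of convex sequences. Because every affine sequence $\tau$ belongs to $\convexs$ together with its opposite $-\tau$, the cone $\convexs$ is invariant under affine translations. After translating by $-\tau^*$, where $\tau^*$ is the projection of $\vu$ onto the space of affine sequences, we reduce to estimating the entropy of $\convexs \cap B(\vu - \tau^*, t)$, whose Euclidean envelope is controlled by $\sqrt{n}\, R_\vu + t$. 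The Guntuboyina-Sen bound then gives something of the form $\log N(\epsilon, \cdot, \euclidnorm{\cdot}) \le C \log(en)\, (\sqrt{n}\, R_\vu / \epsilon)^{1/2}$ in the regime $t \lesssim \sqrt{n}\, R_\vu$ selected by \eqref{eq:rate45condition}.

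Evaluating Dudley's integral yields an upper bound of order $C \sigma \sqrt{\log(en)}\, n^{1/8} R_\vu^{1/4} t^{3/4}$. Setting this equal to $t^2/2$ and solving for $t$ gives $t_*(\vu)^2 \asymp \sigma^{8/5} R_\vu^{2/5} n^{1/5} \log(en)$, and dividing by $n$ recovers the announced rate $(R_\vu \sigma^4)^{2/5} \log(en)/n^{4/5}$.

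The main obstacle is the careful transfer of the Guntuboyina-Sen entropy bound, stated for convex sequences with a global size constraint, to the class localized around $\vu$. Two points need care: the effective ``size'' of the class must be related to $R_\vu$ rather than to $\euclidnorm{\vu}$, which is the reason for subtracting the affine projection $\tau^*$; and the localization radius $t$ must be dominated by $\sqrt{n}\, R_\vu$ so that the nonparametric entropy $(\sqrt{n}\, R_\vu / \epsilon)^{1/2}$ is the leading contribution. Condition \eqref{eq:rate45condition} is precisely what ensures this: when it fails, the parametric bound obtained by combining \Cref{cor:tangent-cone} with \Cref{thm:delta-convex} (of order $\log(en)^{5/4}/n$) is already sharper than the $n^{-4/5}$ rate, so the nonparametric statement is only meaningful in the regime \eqref{eq:rate45condition}.
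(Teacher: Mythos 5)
Your proposal is correct and follows essentially the same route as the paper: apply \Cref{thm:isomorphic} to $K=\convexs$, bound the localized Gaussian width by Dudley's entropy integral combined with the Guntuboyina--Sen metric entropy bound for balls of convex sequences (whose size enters through $R_\vu$), solve the fixed-point equation for $t_*(\vu)$, and use \eqref{eq:rate45condition} exactly to guarantee $t_*^2/n \le R_\vu^2$ so that the nonparametric entropy term dominates. The only cosmetic difference is bookkeeping of the logarithmic exponent (the entropy bound of \cite{guntuboyina2013global} carries $\log(en)^{5/4}$ rather than $\log(en)$, which after the Dudley integral and the fixed-point computation yields precisely the $\log(en)$ factor in the statement), and this does not affect the argument.
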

Thanks to the metric entropy bounds of \cite{guntuboyina2013global},
\Cref{cor:rate45} holds for equispaced design points
or design points that are almost equispaced in the sense
that the ratio \eqref{eq:ratio} is bounded from above by a numerical
constant.
It is natural to ask
whether the nonparametric rate $n^{-4/5}$ can be achieved
by the LS estimator for any design points.
The following result provides a negative answer:
There exist design points such that no estimator can
achieve a better rate than $n^{-2/3}$.
This rate $n^{-2/3}$ is substantially slower than the nonparametric rate $n^{-4/5}$
achieved by the LS estimator in convex regression with equispaced design points.

\begin{thm}
    \label{thm:convex-23-lower}
    Let $V>0$.
    There exists design points $x_1<...<x_n$ that depend on $V$
    such that for any estimator $\hmu$,
    \begin{equation}
        \sup_{\vmu\in\K^C_{x_1,...,x_n}\cap\increasings:\;\; \mu_n - \mu_1 \le 2 V}
        \mathbb P_\vmu
        \left(
            \scalednorms{\hmu - \vmu}
            \ge
            C \sigma^2 \left(\frac{V^2}{\sigma^2 n^2 }\right)^{1/3}
        \right) \ge c,
    \end{equation}
    where $c,C>0$ are absolute constants.
\end{thm}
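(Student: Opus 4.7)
The strategy is to reduce the lower bound to the isotonic minimax lower bound of Proposition 4 and Corollary 5 of \cite{bellec2015sharp}, which establish that the class $\{\vmu \in \increasings : V(\vmu) \le V\}$ has minimax rate of order $\sigma^2(V/(\sigma n))^{2/3} = \sigma^2(V^2/(\sigma^2 n^2))^{1/3}$ in the scaled norm. Since $\K^C_{x_1,\ldots,x_n} \cap \increasings \subset \increasings$, the minimax rate over the class $\K^C_{x_1,\ldots,x_n} \cap \increasings \cap \{\mu_n - \mu_1 \le 2V\}$ is at most this isotonic rate, so it suffices to exhibit $V$-dependent design points for which the rate over that class is also at least the claimed quantity.

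My construction clusters the $n$ design points into $k$ groups of $n/k$ points each on $[0,1]$, with very small intra-cluster spacing $\delta$ and inter-cluster spacing of order $1/k$, where $k$ is a parameter to be optimized (eventually of order $(nV^2/\sigma^2)^{1/3}$). On this design I would build a family of hypotheses indexed by $S \in \{0,1\}^k$ of the form $\vmu^{(S)} = \vmu^{(0)} + \boldsymbol{\eta}^{(S)}$, where $\vmu^{(0)}$ is a strongly convex nondecreasing baseline designed to have large positive slope jumps at each cluster boundary (for instance $\mu^{(0)}_i = L\sum_{j=1}^k (x_i - c_j)_+$ for cluster centers $c_j$), and $\boldsymbol{\eta}^{(S)}$ is a monotone step perturbation of small height $h$ adding a jump at the cluster boundaries selected by $S$. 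The slope jumps of $\vmu^{(0)}$ are sized so that the negative slope change induced by $\boldsymbol{\eta}^{(S)}$ at the post-transition intra-cluster interval is absorbed, keeping $\vmu^{(S)} \in \K^C_{x_1,\ldots,x_n} \cap \increasings$. The total-variation constraint $\mu^{(S)}_n - \mu^{(S)}_1 \le 2V$ is enforced by ensuring $V(\vmu^{(0)}) \le V$ and $hk \le V$.

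Applying Assouad's lemma to this $2^k$-hypothesis family, using Gaussian KL divergences bounded by $h^2 n/(2\sigma^2)$, yields a lower bound on the scaled minimax risk of order $h^2 k$. The relevant constraints are $h \lesssim \sigma/\sqrt{n}$ for the KL bound, $hk \le V$ for the total variation, and a convexity cap on $h$ depending on the baseline's slope jumps and $\delta$. The main technical obstacle is to ensure this convexity cap is weak enough: a naive piecewise-linear baseline with linearly increasing slopes yields only $h \lesssim V/k^2$, which after optimization produces only the weaker rate $n^{-3/4}$. Achieving the sharp $n^{-2/3}$ rate requires a baseline whose curvature is more sharply concentrated at each cluster boundary (so that its successive slope jumps are of constant order rather than $O(L/k)$) combined with a sufficiently small $\delta$; with such a construction the optimal choice $k \sim (nV^2/\sigma^2)^{1/3}$ together with $h \sim (V\sigma^2/n)^{1/3}$ reproduces the advertised rate $\sigma^2(V^2/(\sigma^2 n^2))^{1/3}$.
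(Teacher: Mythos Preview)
Your proposal contains a genuine gap. You yourself acknowledge that the straightforward version of your construction---piecewise-linear baseline with linearly increasing slopes, step perturbations at the cluster boundaries, Assouad---only yields the rate $n^{-3/4}$, and you then assert without argument that a baseline ``whose curvature is more sharply concentrated at each cluster boundary'' would close the gap to $n^{-2/3}$. That is precisely the heart of the matter, and it is not carried out. The binding constraint in your scheme is that a step of height $h$ placed between two design points at distance $\delta$ produces a discrete slope of order $h/\delta$, which must then drop back; absorbing this requires a baseline slope jump of at least $h/\delta$ at that boundary. Summing $k$ such jumps over the unit interval forces the baseline total variation to be of order $k\cdot(h/\delta)\cdot(1/k)=h/\delta$, and the requirement $h/\delta\lesssim V$ together with the KL constraint $h\lesssim\sigma/\sqrt n$ and the need for $\delta$ small enough to fit $n/k$ points per cluster leaves no room. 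Vague appeals to ``concentrating curvature'' do not resolve this tension; a complete proof along your lines would have to exhibit the baseline explicitly and verify all constraints simultaneously, which you do not do.

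The paper's proof avoids this difficulty entirely via a much simpler and quite different idea. Rather than building convex hypotheses by hand, it observes that for \emph{any} finite collection of strictly increasing sequences one can choose geometrically spaced design points $x_i=-\epsilon^i$ (with $\epsilon>0$ small enough, depending on the sequences) so that every sequence in the collection belongs to $\K^C_{x_1,\ldots,x_n}$. Concretely, the convexity condition \eqref{eq:def-convex-x_i} at index $i$ becomes $(x_{i+1}-x_i)/(x_i-x_{i-1})\le (u_{i+1}-u_i)/(u_i-u_{i-1})$, and the left side equals $\epsilon$ for geometric design; take $\epsilon$ below the minimum of the right side over all $i$ and all hypotheses. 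The paper therefore simply imports the well-separated packing $\vmu_0,\ldots,\vmu_M\in\increasings$ from the isotonic lower bound of \cite{bellec2015sharp}, shifts by a small linear ramp to make each strictly increasing, chooses $\epsilon$ accordingly, and applies Fano/Tsybakov directly. No new hypothesis family, no Assouad, no delicate balancing of curvature budgets---the design points do all the work.
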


The intuition behind this result is the following.
Let $\vmu=(\mu_1,...,\mu_n)^T\in\increasings$ be strictly increasing
and let $\epsilon \coloneqq \frac 1 2 \wedge \min_{i=2,...,n-1}\frac{\mu_{i+1} - \mu_i}{\mu_i - \mu_{i-1}}$.
If we define the design points $x_1<...<x_n$ by 
$x_i = - \epsilon^i$ for all $i=1,...,n$
then
$\vmu\in K^C_{x_1,...,x_n}$ (this statement is made rigorous in the proof of \Cref{thm:convex-23-lower}).
That is, for any strictly increasing sequence $\vmu$,
there are geometrically spaced design points $x_1<...<x_n$ such that
$\vmu=(f(x_1),...,f(x_n))^T$ for some convex function $f$.
As explained in the following proof,
this observation yields that 
the minimax lower bound over $\increasings$ implies a minimax lower bound over $\K^C_{x_1,...,x_n}$
for a specific choice of design points $x_1,...,x_n$.

\begin{proof}[Proof of \Cref{thm:convex-23-lower}]
    For any $\vu\in\increasings$, let $V(\vu) = u_n - u_1$.
    It was proved  in \cite[Proposition 4 and Corollary 5]{bellec2015sharp}
    that for some integer $M\ge 2$, there exist $\vmu_0,...,\vmu_M\in\increasings$
    such that $V(\hmu_j) \le V$ and
    \begin{equation}
        \scalednorm{\vmu_j - \vmu_k}
        \ge
        \frac{C \sigma^2}{4} \left(\frac{V^2}{\sigma^2 n^2 }\right)^{1/3},
        \qquad
        \frac{n}{2\sigma^2}\scalednorm{\vmu_j - \vmu_0} \le \frac{\log M}{16}
        \label{eq:relations-kullback}
    \end{equation}
    for all distinct $j,k\in\{0,...,M\}$ and some absolute constant $C>0$.
    The quantity $\frac{n}{2\sigma^2}\scalednorm{\vmu_j - \vmu_0}$
    is Kullback-Leibler
    divergence from $\mathcal N(\vmu_j,\sigma^2 I_{n\times n})$
    to $\mathcal N(\vmu_0,\sigma^2 I_{n\times n})$.

    Define $\vv=(v_1,...,v_n)^T$ by $v_i = iV/n$ for all $i=1,...,n$ so that $V(\vv) \le V$ and $\vv$
    is strictly increasing.
    We define $\vu^0,...,\vu^M$ by $\vu^j = \vmu_j + \vv$ so that
    $\vu^0,...,\vu^M$ are strictly increasing.
    Furthermore, since $\vmu_j - \vmu_k = \vu^j - \vu^k$
    it is clear that \eqref{eq:relations-kullback} still holds if $\vmu_j,\vmu_k$
    are replaced by $\vu^j,\vu^k$.
    Applying \cite[Theorem 2.7]{tsybakov2009introduction} yields that
    for any estimator $\hmu$,
    \begin{equation}
        \sup_{j=0,...,M}
        \mathbb P_{\vu_j}
        \left(
            \scalednorms{\hmu - \vmu}
            \ge
            C \sigma^2 \left(\frac{V^2}{\sigma^2 n^2 }\right)^{1/3}
        \right) \ge c,
    \end{equation}
    where $c,C>0$ are absolute constants.

    Let $\epsilon \coloneqq \frac 1 2 \wedge \min_{j=1,...,M} \min_{i=2,...,n-1} \frac{u^j_{i+1} - u^j_i}{u^j_i - u^j_{i+1}}$.
    Since the sequences $\vu^0,...,\vu^M$ are strictly increasing we have $\epsilon>0$.
    Define the design points $x_1<...<x_n$ by $x_i = - \epsilon^i$ for all $i=1,...,n$.
    Then for all $j=0,...,M$ 
    we have
    \begin{equation}
        \frac{x_{i+1} - x_i}{x_i - x_{i-1}}
        = 
        \epsilon \le \frac{u^j_{i+1} - u^j_i}{u^j_i - u^j_{i+1}}
    \end{equation}
    for all $i=2,...,n-1$, and by \eqref{eq:def-convex-x_i}
    this implies that $\vu^j\in\K^C_{x_1,...,x_n}$.
    It remains to show that $V(\vu^j) \le 2V$, which is a consequence of $V(\vmu_j) \le V$ and $V(\vv)\le V$.
\end{proof}

If the practitioner can choose the design points, 
then geometrically spaced design points should be avoided.

Any convex function is unimodal so that the inclusion
$\K^C_{x_1,...,x_n}\subset\mathcal U$ holds for any design points $x_1<...<x_n$.
Intuitively, this inclusion means that convexity brings more structure than unimodality.
\Cref{thm:convex-unimodal} below shows that the convex LS
enjoys essentially the same risk bounds and oracle inequalities
those satisfied by the unimodal LS estimator
in \Cref{thm:unimodal-soi,thm:unimodal-soi-n23}.

\begin{thm}
    \label{thm:convex-unimodal}
    Let $\vmu\in \R^n$
    and let $x_1<...<x_n$ be any real numbers.
    Then for all $x>0$, with probability at least $1-2 e^{-x}$, the estimator
    $\hmu = \ls(\K^C_{x_1,...,x_n})$ satisfies
    \begin{equation}
    \scalednorm{\hmu - \vmu}
        \le
        \min_{\vu\in\mathcal U}\left[
            \scalednorm{\vu - \vmu}
            \vee
        \frac{\sigma}{\sqrt n} 
            \left(
                2\sqrt{(k(\vu)+1)\log(\tfrac{en}{k(\vu)+1})} 
                +
                3\sigma \sqrt{2(x+\log n)}
            \right)
        \right]
    .
    \end{equation}
\end{thm}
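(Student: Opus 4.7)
My plan is to extend the isomorphic method of \Cref{thm:isomorphic} to the present setting by exploiting the inclusion $\hmu = \ls(\K^C_{x_1,\dots,x_n}) \in \K^C_{x_1,\dots,x_n} \subset \mathcal U$, so that $\hmu$ is itself unimodal with some random mode $\hat m \in \{1,\dots,n\}$. Fix an oracle $\vu \in \mathcal U$ with $k = k(\vu)$ constant pieces on the partition $(T_1,\dots,T_k)$; I would prove the stated bound conditionally on the event $\{\hat m = m\}$ for each $m \in \{1,\dots,n\}$, and conclude via a union bound over the $n$ possible values of $\hat m$. This union bound is the source of the $\log n$ appearing inside $\sqrt{2(x+\log n)}$.

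The heart of the argument is to control the localized Gaussian supremum
\[
    Z_m(r) \coloneqq \sup_{\vv \in \K^C_{x_1,\dots,x_n} \cap K_m,\ \scalednorm{\vv - \vu} \le r} \vxi^T(\vv - \vu).
\]
The crucial structural observation is this: for $\vv \in \K^C_{x_1,\dots,x_n} \cap K_m$, the restriction $\vv|_{T_j}$ is monotone whenever $T_j$ does not contain $m$ (since $\vv$ is then non-increasing or non-decreasing on $T_j$), and the single piece $T_{j^*}$ that contains $m$ splits into two monotone blocks at position $m$. Because $\vu$ is constant on each $T_j$, the same decomposition applies to $\vv - \vu$: it lies in a product of at most $k+1$ signed isotonic cones with block sizes $p_1,\dots,p_{k+1}$ summing to $n$. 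By \eqref{eq:delta-product}, \eqref{eq:delta-increasings-logn} and concavity of $\log$, this product cone has statistical dimension at most $(k+1)\log(en/(k+1))$, and \eqref{eq:equivalence-delta} then yields $\E Z_m(r) \le r\sigma\sqrt{n(k+1)\log(en/(k+1))}$. Borell--TIS concentration in the form \eqref{eq:concentration-delta-not-squared}, applied for each $m$ and combined with a union bound, upgrades this into the required high-probability statement.

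Finally, I would feed this Gaussian-width estimate into a case-split analogous to the proof of \Cref{thm:isomorphic}. In the case $\scalednorm{\hmu - \vu} \le t$, where $t$ denotes the rate term in the statement, the triangle inequality $\scalednorm{\hmu - \vmu} \le \scalednorm{\vu - \vmu} + t \le 2(\scalednorm{\vu - \vmu} \vee t)$ delivers the $\vee$-form up to an absolute constant (absorbed into the factors $2$ and $3$ appearing in $t$). In the complementary case $\scalednorm{\hmu - \vu} > t$, setting $\alpha = t/\scalednorm{\hmu - \vu} \in (0,1)$ and considering $\vv_\alpha = \alpha\hmu + (1-\alpha)\Pi_{\K^C_{x_1,\dots,x_n}}(\vu) \in \K^C_{x_1,\dots,x_n}$, the basic inequality \eqref{eq:strong-convexity} applied with $\vw = \vv_\alpha$, together with the bound on $Z_{\hat m}$, forces $\scalednorm{\hmu - \vmu} \le \scalednorm{\vu - \vmu}$.

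The main obstacle is the structural observation at the core of the central step: since $\vu$ itself need not lie in $\K^C_{x_1,\dots,x_n}$, the tangent-cone framework of \Cref{prop:bound-tangent,prop:tangent-cone-convex} does not apply, and one must instead reason with the ambient unimodal structure of $\vv$ rather than with a tangent cone at $\vu$. The factor ``$+1$'' appearing in $(k(\vu)+1)\log(en/(k(\vu)+1))$ is precisely the cost of the single piece $T_{j^*}$ on which $\vv$ can be non-monotone, that is, the cost of the mismatch between the mode of $\vu$ and the mode $m$ of $\vv$.
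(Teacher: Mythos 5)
Your localization step is essentially the paper's: decomposing $\vv-\vu$ over the constant pieces of $\vu$, with the single piece containing the mode $m$ split into two monotone blocks, is exactly \Cref{lemma:T_m} (giving $\max_{m}\delta(\mathcal T_{m,\vu})\le(k(\vu)+1)\log\frac{en}{k(\vu)+1}$), and your union bound over the $n$ possible modes is \Cref{lemma:Y_vu}; that is also where the paper's $\log n$ comes from. The genuine gap is in the final assembly. First, in your case $\scalednorm{\hmu-\vu}\le t$ the triangle inequality gives the sum $\scalednorm{\vu-\vmu}+t$, and passing to $2\left(\scalednorm{\vu-\vmu}\vee t\right)$ puts a factor $2$ in front of the approximation error; that factor multiplies the whole maximum and cannot be ``absorbed into $t$'', so you do not obtain the stated bound, in which the approximation error enters the maximum with constant $1$. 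Second, and more seriously, your case $\scalednorm{\hmu-\vu}>t$ does not go through: replacing $\vu$ by $\Pi_{\K^C_{x_1,\dots,x_n}}(\vu)$ in the convex combination destroys the identity $\vv_\alpha-\vu=\alpha(\hmu-\vu)$ on which the rescaling argument of \Cref{thm:isomorphic} rests, and it brings in the bias $\euclidnorm{\vu-\Pi_{\K^C_{x_1,\dots,x_n}}(\vu)}$, which can be arbitrarily large and is absent from the theorem. Moreover, the conclusion you claim for that case, $\scalednorm{\hmu-\vmu}\le\scalednorm{\vu-\vmu}$, is not forced by any bound on a localized Gaussian supremum: with small noise, $\hmu\approx\Pi_{\K^C_{x_1,\dots,x_n}}(\vmu)$ is typically farther from $\vmu$ than a well-chosen $\vu\in\mathcal U$, even though $\scalednorm{\hmu-\vu}>t$.

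What the paper does instead, and what your plan is missing, is a case split on $\tilde R\coloneqq\euclidnorm{\hmu-\vmu}$ versus $R\coloneqq\euclidnorm{\vu-\vmu}$, not on $\euclidnorm{\hmu-\vu}$ versus a radius $t$. If $\tilde R\le R$ the maximum-form bound holds trivially; if $\tilde R>R$, the paper starts from the basic inequality $\tilde R^2\le\vxi^T(\hmu-\vmu)$ coming from \eqref{eq:kkt-noise}, splits $\vxi^T(\hmu-\vmu)=\vxi^T(\hmu-\vu)+\vxi^T(\vu-\vmu)$, bounds the first term by $Y_\vu\,\euclidnorm{\hmu-\vu}\le 2Y_\vu\tilde R$ (using that $\hmu\in\mathcal U$ and $\euclidnorm{\hmu-\vu}\le\tilde R+R<2\tilde R$) and the second by $G\,R\le G\,\tilde R$ with $G\coloneqq\vxi^T(\vu-\vmu)/\euclidnorm{\vu-\vmu}$, and then divides by $\tilde R$ to get $\tilde R\le 2Y_\vu+G$ with no localization radius and with constant $1$ on the approximation error. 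The auxiliary Gaussian $G$, for which your plan has no counterpart precisely because $\vu\notin\K^C_{x_1,\dots,x_n}$ prevents using \eqref{eq:kkt-noise} with comparison point $\vu$, is what produces the extra $\sqrt{2(x+\log n)}$ term and the $1-2e^{-x}$ probability. Your statistical-dimension computation can be kept as is; the concluding argument should be replaced by this division-by-$\tilde R$ device, as in \Cref{thm:unimodal-soi}.
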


\begin{thm}
    \label{thm:convex-rate23}
    There exists an absolute constant $c>0$ such that the following holds.
    Let $\vmu\in \R^n$
    and let $x_1<...<x_n$ be any real numbers.
    Then for all $x>0$, with probability at least $1-2 e^{-x}$, the estimator
    $\hmu = \ls(\K^C_{x_1,...,x_n})$ satisfies
    \begin{align}
        \scalednorm{\hmu  - \vmu}
        \le 
        \min_{\vu\in\mathcal U}
        \left[
            \scalednorm{\vu - \vmu}
            +
            2 c \sigma \left(\frac{\sigma +V(\vu)}{\sigma n}\right)^{1/3}
        \right]
        + \frac{2 (2+\sqrt 2) \sigma \sqrt{x+\log(en)}}{\sqrt n}
        .
    \end{align}
\end{thm}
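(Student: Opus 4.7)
\emph{Plan.} I would derive \Cref{thm:convex-rate23} from \Cref{thm:convex-unimodal} by approximating any unimodal oracle $\vu\in\mathcal U$ with a piecewise-constant unimodal sequence having few pieces, in the spirit of \cite[Lemma 2]{bellec2015sharp} but adapted from monotone to unimodal sequences. The main work is in the approximation lemma and in balancing the bias against the stochastic term.

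\emph{Step 1 (approximation lemma).} For any $\vu\in\mathcal U$ and any integer $k\ge 1$, I would construct $\vu^{(k)}\in\mathcal U$ that is piecewise constant with $k(\vu^{(k)})\le k$ and satisfies $\scalednorm{\vu^{(k)}-\vu}\le\inftynorm{\vu^{(k)}-\vu}\le V(\vu)/k$. The construction is to place $k$ equispaced levels across $[\min_i u_i,\max_i u_i]$ and round each entry of $\vu$ to the nearest level. Unimodality of $\vu$ forces every level set to be a contiguous block of $\{1,\dots,n\}$, so the rounded sequence remains unimodal with at most $k$ constant pieces.

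\emph{Step 2 (substitute and optimize).} Apply \Cref{thm:convex-unimodal} at $\vu^{(k)}$ and use $a\vee b\le a+b$ together with the triangle inequality $\scalednorm{\vu^{(k)}-\vmu}\le\scalednorm{\vu-\vmu}+V(\vu)/k$ to obtain, with probability at least $1-2e^{-x}$,
\begin{equation*}
\scalednorm{\hmu-\vmu}\le\scalednorm{\vu-\vmu}+\frac{V(\vu)}{k}+\frac{2\sigma\sqrt{(k+1)\log(en/(k+1))}}{\sqrt n}+\frac{3\sigma^2\sqrt{2(x+\log n)}}{\sqrt n}.
\end{equation*}
Choosing $k\sim\bigl((V(\vu)+\sigma)\sqrt n/\sigma\bigr)^{2/3}$ balances the bias $V(\vu)/k$ against the stochastic term $\sigma\sqrt{k/n}$ and yields a main term of order $\sigma^{2/3}(\sigma+V(\vu))^{1/3}n^{-1/3}=\sigma\bigl((\sigma+V(\vu))/(\sigma n)\bigr)^{1/3}$; replacing $V(\vu)$ by $V(\vu)+\sigma$ handles both the degenerate regime $V(\vu)\le\sigma$ and the integrality constraint $k\ge 1$.

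\emph{Main obstacle.} The logarithm $\sqrt{\log(en/k)}$ inside \Cref{thm:convex-unimodal} introduces an extra $(\log n)^{1/3}$ factor in Step 2 that is absent from the statement. To eliminate it I would bypass \Cref{thm:convex-unimodal} and instead run the argument directly from \Cref{thm:isomorphic}, bounding the localized Gaussian width of $K=\K^C_{x_1,\ldots,x_n}$ by Dudley chaining on the metric entropy of bounded unimodal sequences, as in the proof of \Cref{cor:rate23} in the isotonic case. The key structural input is the inclusion $\K^C_{x_1,\ldots,x_n}\subset\mathcal U$ already used in \Cref{thm:delta-convex}: it controls the localized Gaussian width of the convex cone uniformly in the design points, while the $\sqrt{\log(en)}$ in the deviation term of the conclusion arises from the union bound over the $n$ possible modes of a unimodal sequence.
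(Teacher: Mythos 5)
Your first route (piecewise\hyp{}constant unimodal approximation plus \Cref{thm:convex-unimodal}) is fine as far as it goes, but, as you note yourself, it provably cannot give the stated bound: balancing $V(\vu)/k$ against $\sigma\sqrt{(k+1)\log(en/(k+1))/n}$ leaves an extra $\log^{1/3}(en)$ in the main term, whereas the theorem has an absolute constant $c$ (the one from \eqref{eq:chaining-chatterjee}) and no logarithm there. The problem is that your proposed repair does not work either. \Cref{thm:isomorphic} requires the benchmark point $\vu$ to belong to the constraint set $K$: both the basic inequality \eqref{eq:kkt-noise} and the convex-combination step $\vv=\alpha\hmu+(1-\alpha)\vu\in K$ in its proof use $\vu\in K$ and the convexity of $K$. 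So running \Cref{thm:isomorphic} with $K=\K^C_{x_1,\dots,x_n}$ yields at best a minimum over convex sequences, while \Cref{thm:convex-rate23} asserts a minimum over all unimodal $\vu\in\mathcal U\supsetneq\K^C_{x_1,\dots,x_n}$ --- and that is precisely the content of the theorem (the convex LS competes with unimodal oracles for worst-case designs). The two minima can be far apart, e.g.\ when $\vmu$ is unimodal but far from every convex sequence for the given design, so you cannot pass from one to the other by an approximation step, and you cannot enlarge $K$ to $\mathcal U$ inside \Cref{thm:isomorphic} because $\mathcal U$ is not convex and $\hmu$ is not the LS over it.

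What the paper does instead is to abandon the strong-convexity framework of \Cref{thm:isomorphic} and argue to first order with an arbitrary unimodal oracle $\vu$: writing $\vxi^T(\hmu-\vmu)=\vxi^T(\hmu-\vu)+\vxi^T(\vu-\vmu)$, the first term is controlled, after normalizing by $\euclidnorm{\hmu-\vu}$, by the variables $Z_m$ of \eqref{eq:def-Z_m} (localized, normalized suprema over the unimodal cones $K_m$), which is legitimate because $\hmu\in\K^C_{x_1,\dots,x_n}\subset\mathcal U$; \Cref{lemma:E-Z_m} bounds $\E[Z_m]\le t+2\sqrt{\log(en)}$ by cutting $K_m$ relative to $\vu$ into monotone pieces and invoking exactly the localized-width bound \eqref{eq:chaining-chatterjee}, the union bound over the $n$ modes plus Gaussian concentration handles $\max_m Z_m$ (so your reading of where the $\sqrt{\log(en)}$ in the deviation term comes from is correct), and the second term is the single Gaussian $G$. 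Dividing by $\euclidnorm{\hmu-\vmu}$ replaces the quadratic cancellation one gets when $\vu\in K$, which is also why the conclusion is an oracle inequality for $\scalednorm{\cdot}$ rather than $\scalednorms{\cdot}$ --- a point your proposal does not engage with. (A small separate remark: in your Step 1 the level sets of a unimodal sequence are unions of at most two blocks, so the rounded sequence has at most $2k$ pieces, not $k$; this is harmless, but the missing idea above is not.)
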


The proofs of \Cref{thm:convex-unimodal,thm:convex-rate23} are given in \Cref{s:proofs-convex-unimodal}.
\Cref{thm:convex-rate23} shows that the convex LS estimator achieves a rate
of order $n^{-2/3}$ for any design points.
Together, \Cref{thm:convex-rate23,thm:convex-23-lower}
establish that this rate is minimal over all design points and all univariate convex functions with bounded total variation.
To make this precise, define the minimax quantity
\begin{equation}
    \mathfrak R (V)
    \coloneqq
    \sup_{x_1<...<x_n}
    \inf_{\hmu}
    \sup_{\vmu\in \K^C_{x_1,...,x_n}: \; V(\vmu) \le V}
    \Evmu [
        \scalednorms{\hmu - \vmu}
    ],
    \qquad
    V 
    >
    0,
\end{equation}
where the first supremum is taken over all $x_1,...,x_n\in\R$ such that $x_1<...<x_n$
and the infimum is taken over all estimators that may depend on $x_1,...,x_n$
(for instance, the convex LS estimator $\ls(\K^C_{x_1,...,x_n}))$ depends on the design points).
The quantity $\mathfrak R(V)$ represents the minimax risk
over all possible univariate design points,
and over all convex sequences.
By taking $\vu = \vmu$ in \Cref{thm:convex-rate23} and by integration,
we obtain that 
\begin{equation}
    \sup_{x_1<...<x_n}
    \sup_{\vmu\in \K^C_{x_1,...,x_n}: \; V(\vmu) \le V}
    \Evmu [
        \scalednorms{\ls(\K^C_{x_1,...,x_n}) - \vmu}
    ]
    \le C \sigma^2 \left(\frac{\sigma + V}{\sigma n}\right)^{2/3}
\end{equation}
for some absolute constant $C>0$.
On the other hand, \Cref{thm:convex-23-lower} and Markov inequality
yield that $\mathfrak R(V) \ge C' \sigma^2 (\frac{V}{\sigma n})^{2/3}$
for some absolute constant $C'>0$.
In summary,
\begin{equation}
    C' \sigma^2 \left(\frac{V}{\sigma n}\right)^{2/3}
    \le \mathfrak R(V)
    \le C \sigma^2 \left(\frac{\sigma + V}{\sigma n}\right)^{2/3}.
\end{equation}
This establishes that the nonparametric rate of univariate convex regression
over all possible design points is of order $n^{-2/3}$ provided that $V\ge \sigma$.
This rate is substantially slower than the rate $n^{-4/5}$
observed by \citet{guntuboyina2013global} for equispaced design points.
In summary, there is no hope to achieve the nonparametric rate $n^{-4/5}$ 
for any univariate design points.

As a convex function is unimodal, the inclusion $\K^C_{x_1,...,x_n}\subset \mathcal U$
holds.
The convex constraints that define $\K^C_{x_1,...,x_n}$
are more restrictive than the unimodal constraint,
i.e., convexity brings more structure than unimodality.
For equispaced design points, the extra structure brought by convexity
yields a nonparametric rate of order $n^{-4/5}$
which is faster than the unimodal nonparametric rate $n^{-2/3}$.
However, for some worst-case design points, this extra structure is uninformative
from a statistical standpoint:
The nonparametric rates of convex and unimodal regression are of the same order $n^{-2/3}$.

\section{Estimation of the projection of the true parameter}
\label{s:misspecification}
Let $K$ be a subset of $\Rn$.
If the unknown regression vector $\vmu$ lies in $K$,
we say that the model is well-specified.
If $\vmu\in K$,
an estimator $\hmu$ enjoys good performance if
the squared error 
\begin{equation}
\label{eq:loss}
    \scalednorms{\hmu-\vmu}
\end{equation}
is small, either in expectation or with high probability.
If $\vmu\notin K$, we say that the model is misspecified.
In that case, several natural quantities are
of interest to assess the performance
of an estimator $\hmu$. 
The regret of order 1 and the regret of order 2
of an estimator $\hmu$ are defined by
\begin{equation}
    \label{eq:regret}
    R_2(\hmu) \coloneqq \scalednorms{\hmu - \vmu}
    - \min_{\vu\in K}\scalednorms{\vu-\vmu},
    \qquad
    R_1(\hmu) \coloneqq \scalednorm{\hmu - \vmu}
    - \min_{\vu\in K}\scalednorm{\vu-\vmu}.
\end{equation}
If $\hmu\in K$, it is clear that $R_1(\hmu) \ge 0,R_2(\hmu)\ge 0$
and that
$R_1(\hmu)^2 \le R_2(\hmu)$
by using the basic inequality $(a-b)^2\le |a^2 - b^2|$ for all $a,b\ge 0$.

If the set $K$ is closed and convex and $\hmu$ is valued in $K$,
another quantity of interest is the estimation error
with respect to the projection of $\vmu$ onto $K$:
\begin{equation}
    \label{eq:estimation-error}
    \scalednorms{\hmu - \Pi_K(\vmu)}.
\end{equation}
The triangle inequality yields
$
|\scalednorm{\hmu - \vmu}
    - \min_{\vu\in K}\scalednorm{\vu-\vmu}|
\le
\scalednorm{\hmu - \Pi_K(\vmu)}$.
Furthermore, if $\hmu$ is valued in $K$ and $K$ is convex,
then \eqref{eq:cos} with $\vy$ replaced by $\vmu$ and $\vu$ replaced by $\hmu$
can be rewritten as
\begin{equation}
    \scalednorms{\hmu - \Pi_K(\vmu)}
    \le
    \scalednorms{\hmu - \vmu}
    - \scalednorms{\vmu - \Pi_K(\vmu)},
    \quad
    \text{ for all }
    \hmu\in K.
\end{equation}
Thus, if $K$ is convex, for any estimator $\hmu$
valued in $K$ we have
$R_1^2(\hmu) \le \scalednorms{\hmu - \Pi_E(\vmu)} \le R_2(\hmu)$.
The following Proposition sums up the relationship
between the quantity \eqref{eq:estimation-error} and the regrets of order 1 and 2
in the case of a closed convex set $K$.
\begin{prop}[Misspecification inequalities]
    \label{prop:misspecification}
    Let $\vmu\in\R^n$ and let $K\subset\R^n$ be a closed convex set.
    Then $\min_{\vu\in K}\scalednorm{\vu-\vmu} = \scalednorm{\Pi_K(\vmu) - \vmu}$
    and
    for any $\hmu\in K$, the following holds almost surely
    \begin{equation}
    \left(
        \scalednorm{\hmu - \vmu}
        -
        \scalednorm{\Pi_K(\vmu) - \vmu}
    \right)^2
    \le
    \scalednorms{\hmu - \Pi_K(\vmu)}
    \le
    \scalednorms{\hmu - \vmu}
    - \scalednorms{\Pi_K(\vmu) - \vmu}.
    \end{equation}
\end{prop}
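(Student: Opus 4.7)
The proposition collects three facts: the identification of the minimum, a reverse-triangle-inequality bound, and a Pythagorean-type bound. All three are either immediate or already flagged in the discussion preceding the statement, so the plan is to assemble them cleanly rather than invent anything new.

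First, for the identity $\min_{\vu\in K}\scalednorm{\vu-\vmu} = \scalednorm{\Pi_K(\vmu)-\vmu}$ I would simply invoke the definition of the projection: the vector $\Pi_K(\vmu)$ is by definition the unique minimizer of $\vu\mapsto \scalednorms{\vu-\vmu}$ over the closed convex set $K$. Since $\scalednorm{\cdot}$ is nonnegative, minimizing $\scalednorm{\cdot}$ and $\scalednorms{\cdot}$ yield the same minimizer, so the identity follows.

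For the left inequality, I would apply the reverse triangle inequality to the norm $\scalednorm{\cdot}$ with the three points $\hmu$, $\vmu$, and $\Pi_K(\vmu)$: this gives
\begin{equation*}
    \bigl| \scalednorm{\hmu-\vmu} - \scalednorm{\Pi_K(\vmu)-\vmu} \bigr|
    \le
    \scalednorm{\hmu - \Pi_K(\vmu)}.
\end{equation*}
Squaring both sides yields the left inequality. This step uses only that $\scalednorm{\cdot}$ is a genuine norm on $\R^n$ and makes no use of convexity beyond the already-established existence of $\Pi_K(\vmu)$.

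For the right inequality, I would directly invoke the cosine-theorem identity \eqref{eq:cos} from the preliminaries, specialized by replacing $\vy$ with $\vmu$ and $\vu$ with $\hmu$. Since $\hmu\in K$ by hypothesis and $K$ is closed and convex, \eqref{eq:cos} gives
\begin{equation*}
    \scalednorms{\Pi_K(\vmu)-\vmu}
    +
    \scalednorms{\hmu - \Pi_K(\vmu)}
    \le
    \scalednorms{\hmu - \vmu},
\end{equation*}
which is exactly the right inequality after rearrangement. No step here poses a real obstacle; the only ``care'' item is to ensure the hypothesis $\hmu\in K$ is used precisely at the point where \eqref{eq:cos} is invoked, since that inequality requires its second argument to lie in $K$.
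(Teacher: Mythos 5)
Your proposal is correct and matches the paper's own argument: the paper establishes this proposition through the discussion immediately preceding it, using the triangle inequality for the left bound and the cosine-theorem inequality \eqref{eq:cos} with $\vy$ replaced by $\vmu$ and $\vu$ replaced by $\hmu$ for the right bound, exactly as you do. Your additional remark that minimizing $\scalednorm{\cdot}$ and $\scalednorms{\cdot}$ over $K$ gives the same minimizer is the same routine observation the paper relies on implicitly.
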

Estimation of $\Pi_K(\vmu)$ by the LS estimator
$\ls(K)$ has been considered in \cite[Section 4]{zhang2002risk}
for $K=\increasings$, and 
in \cite[Section 6]{guntuboyina2013global} for $K=\convexs$.
\Cref{prop:misspecification} above shows that for any
quantity $r(K)$ and any estimator $\hmu$ valued in a closed convex set $K$, we have
\begin{equation}
    \scalednorms{\hmu - \vmu}
    \le
    \scalednorms{\Pi_K(\vmu) - \vmu}
    + r(K)
    \quad
    \text{implies}
    \quad
    \scalednorms{\hmu - \Pi_K(\vmu)} ] \le r(K),
\end{equation}
i.e., a sharp oracle inequality with leading constant 1 automatically 
implies an upper bound on the estimation error with respect to the projection of $\vmu$ onto $K$.
For instance, if $K=\increasings$, \Cref{thm:increasings}
and \Cref{cor:rate23} with $\vu=\Pi_K(\vmu)$ imply the following.
If $\hmu = \ls(\increasings)$ and $\vpi = \Pi_{\increasings}(\vmu)$
then
\begin{align}
    &\mathbb P \left(
        \scalednorms{\hmu - \vpi}
        \le
        \frac{\sigma^2 k(\vpi)}{n} \log\frac{en}{k(\vpi)}
    \right) \ge 1 - e^{-x},
    \\
    &\mathbb P \left(
        \scalednorms{\hmu - \vpi}
        \le
        2c \sigma^2 \left(
            \frac{\sigma + V(\vpi)}{\sigma n}\right
        )^{2/3}
        + \frac{4\sigma^2 x}{n} 
    \right) \ge 1 - e^{-x},
\end{align}
for any $\vmu\in\R^n$,
where $c>0$ is an absolute constant and $V(\cdot)$ is the total variation defined in \eqref{eq:def-V}.
That is, in the misspecified case, the LS estimator $\ls(\increasings
)$ estimates $\vpi$ at the parametric rate
if $\vpi$ has few constant pieces,
and at the nonparametric rate $n^{-2/3}$ otherwise.
Similar conclusions can be drawn in convex regression
from \Cref{thm:soi-for-convex} and \Cref{cor:rate45}.
If $\hmu = \ls(\convexs)$
and $\vpi = \Pi_{\convexs}(\vmu)$
we have
\begin{align}
    &\mathbb P \left(
        \scalednorms{\hmu - \vpi}
        \le
            \frac{ 16 \sigma^2 q(\vpi)}{n}\log\frac{en}{q(\vpi)}
            + \frac{4 \sigma^2 x}{n}
    \right) \ge 1 - e^{-x}, \\
    &\mathbb P\left(
        \scalednorms{\hmu - \vpi}
        \le
            \frac{C \left(R_{\vpi} \sigma^4\right)^{2/5}   \log(en)   }{n^{4/5}}
            + \frac{16\sigma^2 x}{n}
    \right) \ge 1 - e^{-x},
\end{align}
provided that \eqref{eq:rate45condition} holds with $\vu$ replaced by $\vpi$.

Finally,
the following Corollary is an outcome of \Cref{prop:misspecification},
\Cref{prop:bound-tangent},
and \Cref{thm:isomorphic}.
\begin{cor}
    Let $K$ be a closed convex set and let $\vmu\in\R^n$.
    Then almost surely,
    $\scalednorm{\ls(K) - \Pi_K(\vmu)} \le (\sigma/\sqrt n) \euclidnorm{\Pi_{\mathcal T_{K, \Pi_K(\vmu)}}(\vg)}$
    where $\vg = (1/\sigma) \vxi$.
    Furthermore, if $t_*>0$ is such that
    \begin{equation}
            \E 
            \sup_{\vv\in K:\;\euclidnorm{\vv - \Pi_K(\vmu)}\le t_*} \vxi^T \left(\vv - \Pi_K(\vmu) \right)
             \le
             \frac{t_*^2}{2},
    \end{equation}
    then for all $x>0$, with probability at least $1-e^{-x}$ we have
    \begin{equation}
        \euclidnorm{\ls(K) - \Pi_K(\vmu)}
        \le (
            t_* + \sigma \sqrt{2x}
        )
        .
    \end{equation}
\end{cor}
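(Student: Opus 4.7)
The proof plan is to combine the misspecification inequality from \Cref{prop:misspecification} with the two oracle-type bounds already established in \Cref{prop:bound-tangent} and \Cref{thm:isomorphic}, specializing the free vector $\vu\in K$ to the projection $\Pi_K(\vmu)$, which is a valid choice since $\Pi_K(\vmu)\in K$.

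For the first claim, I would apply \Cref{prop:bound-tangent} with $\vu = \Pi_K(\vmu)$. This gives, almost surely,
\begin{equation*}
    \scalednorms{\ls(K) - \vmu}
    - \scalednorms{\Pi_K(\vmu) - \vmu}
    \le \frac{\sigma^2}{n}\euclidnorms{\Pi_{\mathcal T_{K,\Pi_K(\vmu)}}(\vg)}.
\end{equation*}
Since $\ls(K)\in K$, \Cref{prop:misspecification} yields
$\scalednorms{\ls(K)-\Pi_K(\vmu)} \le \scalednorms{\ls(K)-\vmu} - \scalednorms{\Pi_K(\vmu)-\vmu}$,
and chaining the two inequalities and taking square roots gives the stated bound.

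For the deviation bound, I would apply \Cref{thm:isomorphic} with $\vu=\Pi_K(\vmu)$; the hypothesis \eqref{eq:fixed-point-C} becomes exactly the condition on $t_*$ in the corollary. The theorem produces, with probability at least $1-e^{-x}$,
\begin{equation*}
    \scalednorms{\ls(K)-\vmu} - \scalednorms{\Pi_K(\vmu)-\vmu}
    \le \frac{(t_*+\sigma\sqrt{2x})^2}{n}.
\end{equation*}
Again invoking \Cref{prop:misspecification} to replace the left-hand side by the larger quantity $\scalednorms{\ls(K)-\Pi_K(\vmu)}$, and recalling that $\scalednorm{\cdot}^2 = (1/n)\euclidnorm{\cdot}^2$, taking square roots gives $\euclidnorm{\ls(K)-\Pi_K(\vmu)} \le t_*+\sigma\sqrt{2x}$ on the same event.

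There is no real obstacle: the corollary is genuinely a direct assembly of the three previously established ingredients, and the only point that deserves attention is to check that one may indeed plug $\vu = \Pi_K(\vmu)$ into \Cref{prop:bound-tangent} and \Cref{thm:isomorphic} (both require $\vu\in K$, which holds by definition of the projection onto a closed convex set) and that \Cref{prop:misspecification} applies since $\ls(K)$ is valued in $K$.
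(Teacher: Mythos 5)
Your proposal is correct and is exactly the argument the paper intends: the corollary is stated as a direct outcome of \Cref{prop:misspecification}, \Cref{prop:bound-tangent} and \Cref{thm:isomorphic}, obtained by taking $\vu=\Pi_K(\vmu)\in K$ in the two sharp oracle bounds and then using the misspecification inequality to pass from $\scalednorms{\ls(K)-\vmu}-\scalednorms{\Pi_K(\vmu)-\vmu}$ to $\scalednorms{\ls(K)-\Pi_K(\vmu)}$. Your checks of the hypotheses ($\Pi_K(\vmu)\in K$, $\ls(K)$ valued in $K$) and the normalization $\scalednorms{\cdot}=(1/n)\euclidnorms{\cdot}$ are precisely the points that make the assembly work.
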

These results highlight a major advantage of oracle inequalities with leading
constant 1 over oracle inequalities with leading constant strictly greater than
1 such that \eqref{eq:increasings-adapt-example-oi}.
Indeed, oracle inequalities with leading constant 1 yield an upper bound on the estimation error
$\scalednorm{\ls(K) - \Pi_K(\vmu)}$ for any closed convex set $K$.

\appendix

\section{From unimodal to convex regression}
\label{s:proofs-unimodal-to-convex}
In this section, we develop the tools needed to study unimodal regression
and to prove \Cref{thm:convex-unimodal,thm:convex-rate23}
in convex regression.
\Cref{thm:convex-unimodal,thm:convex-rate23} are similar to
\Cref{thm:unimodal-soi,thm:unimodal-soi-n23}
in unimodal regression. Their proofs share the same fundamental argument.

\subsection{Sharp oracle inequalities in unimodal regression}
\label{s:unimodal}

The following result improves
the risk bound \eqref{eq:unimodal-32}
by reducing the exponent $3/2$ to $1$,
proving that the lower bound 
\eqref{eq:minimax-lower-U} is actually tight.

\begin{thm}
    \label{thm:unimodal}
    Let $\vmu\in\mathcal U$
    and let $k= k(\vmu)$.
    If $\vxi\sim\mathcal N(\vzero,\sigma^2 I_{n\times n})$
    then for all $x$,
    \begin{equation}
        \scalednorm{\ls(\mathcal U) - \vmu}
        \le
        \frac{2\sigma}{\sqrt n} \left(
            \sqrt{(k+1) \log(\tfrac{en}{k+1}) }
            +
            \sqrt{2(x + \log n)}
        \right)
    \end{equation}
    holds with probability at least $1-e^{-x}$.
\end{thm}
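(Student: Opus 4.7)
The plan is to exploit the decomposition $\mathcal U = \bigcup_{m=1}^n K_m$ and deal with the non-convexity of $\mathcal U$ by taking a union bound over the mode.

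First, since $\vmu\in\mathcal U$ is a feasible competitor in the LS optimization, \eqref{eq:ls-nonconvex} gives $\scalednorms{\vy-\hmu}\le\scalednorms{\vy-\vmu}$, which, after expanding $\vy=\vmu+\vxi$, rearranges to $\euclidnorms{\hmu-\vmu}\le 2\vxi^T(\hmu-\vmu)$. Assuming $\hmu\ne\vmu$, dividing by $\euclidnorm{\hmu-\vmu}$ and using that $\hmu\in K_{\hat m}$ for some random $\hat m\in\{1,\dots,n\}$ yields
$$\euclidnorm{\hmu-\vmu}\;\le\;2\max_{m=1,\dots,n}\euclidnorm{\Pi_{C_m}(\vxi)},$$
where $C_m$ is the closed convex cone generated by $K_m-\vmu$; the identification of the supremum with the projection norm uses \eqref{eq:equivalence-delta}.

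The crux is the uniform bound $\delta(C_m)\le(k+1)\log(en/(k+1))$. Let $(T_1,\dots,T_k)$ realize the constant-piece partition of $\vmu$. Since $\vmu$ is constant on each $T_l$, the vector $\vu-\vmu$ inherits the monotonicity of $\vu$ on each $T_l$: for any $\vu\in K_m$ it is non-increasing on every $T_l\subset\{1,\dots,m\}$ and non-decreasing on every $T_l\subset\{m,\dots,n\}$, while the single block $T_j$ containing $m$ is split at $m$ into a non-increasing sub-block and a non-decreasing sub-block. Hence $C_m$ is contained in a product $\widetilde C_m$ of at most $k+1$ monotone cones of type $\increasingsat$ or $\decreasings$ supported on a refinement of $(T_1,\dots,T_k)$. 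Combining \eqref{eq:delta-product}, \eqref{eq:delta-increasings-logn}, and Jensen's inequality applied to $\log(e\,\cdot)$ over the $k+1$ sizes summing to $n$ then gives $\delta(C_m)\le\delta(\widetilde C_m)\le(k+1)\log(en/(k+1))$. Crucially, this decomposition works regardless of whether $\vmu\in K_m$: its only inputs are the constancy of $\vmu$ on each $T_l$ and the monotonicity structure of $K_m$.

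Finally, apply the Gaussian concentration inequality \eqref{eq:concentration-delta-not-squared} to each $C_m$ with deviation parameter $x+\log n$ and union-bound over the $n$ choices of $m$: on an event of probability at least $1-e^{-x}$,
$$\max_m\euclidnorm{\Pi_{C_m}(\vxi)}\;\le\;\sigma\sqrt{(k+1)\log\tfrac{en}{k+1}}+\sigma\sqrt{2(x+\log n)},$$
and dividing by $\sqrt n$ recovers the stated bound. The main obstacle is the structural argument in the preceding paragraph: the key observation is that substituting the probe mode $m$ in place of the actual mode of $\vmu$ splits at most one of $\vmu$'s constant pieces into two monotone sub-blocks, so the effective number of monotone blocks is $k+1$ rather than something larger. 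Without this observation, the sharp factor $(k+1)\log(en/(k+1))$ could not be reached and one would recover at best a bound of the type $k\log^{3/2}(en)$ as in \eqref{eq:unimodal-32}.
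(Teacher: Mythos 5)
Your proof is correct and follows essentially the same route as the paper: your cones $C_m$ are exactly the tangent cones $\mathcal T_{m,\vmu}$ of \Cref{s:unimodal}, your block-splitting bound $\delta(C_m)\le (k+1)\log(en/(k+1))$ is \Cref{lemma:T_m}, and the concentration-plus-union-bound step is \Cref{lemma:Y_vu}. Nothing is missing.
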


Let $\mathcal T_{m,\vu}$ be the tangent cone of $K_m$ at some $\vu\in\mathcal U$,
that is,
\begin{equation}
    \mathcal T_{m, \vu}
    \coloneqq
    \text{closure}
    \{
        t(\vx - \vu)|
        t\ge 0,
        \vx\in K_m
    \}.
\end{equation}
For any $\vu\in\mathcal U$,
define the set $\mathcal W_\vu$ and the random variable $Y_\vu$
by
\begin{equation}
    \mathcal W_\vu\coloneqq \cup_{m=1,...,n}\mathcal T_{m,\vu},
    \qquad
    Y_\vu\coloneqq 
    \sup_{\vv\in\mathcal W_\vu: \euclidnorm{\vv}\le 1} \vxi^T\vv.
    \label{eq:def-Y-u}
\end{equation}
Our proof of \Cref{thm:unimodal} relies on an upper bound on the statistical dimension of the above tangent cones
and the concentration of of the random variable $Y_\vu$.

\begin{lemma}
    \label{lemma:Y_vu}
    Let $\vu\in\mathcal U$ and $x>0$.
    With probability at least $1-e^{-x}$ we have
    $
        Y_\vu
        \le
        \sigma \max_{m=1,...,n}\delta(\mathcal T_{m,\vu})
        +
        \sigma \sqrt{2(x + \log n)}
    $.
\end{lemma}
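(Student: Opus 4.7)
The plan is to reduce $Y_\vu$ to a finite maximum of $n$ cone-projection norms and then combine the Gaussian concentration already recorded in \eqref{eq:concentration-delta-not-squared} with a union bound. First, since $\mathcal{W}_\vu = \bigcup_{m=1}^n \mathcal{T}_{m,\vu}$, the definition \eqref{eq:def-Y-u} of $Y_\vu$ splits as
\[
Y_\vu = \max_{m=1,\dots,n}\;\sup_{\vv \in \mathcal{T}_{m,\vu}:\; \euclidnorm{\vv}\le 1} \vxi^T \vv.
\]
Each tangent cone $\mathcal{T}_{m,\vu}$ is a closed convex cone, so by the cone-projection identity \eqref{eq:equivalence-delta} applied to $\vxi = \sigma \vg$, together with the positive homogeneity of projections onto cones, the inner supremum equals $\sigma \,\euclidnorm{\Pi_{\mathcal{T}_{m,\vu}}(\vg)}$.

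Next, for each fixed $m$ I would apply the concentration inequality \eqref{eq:concentration-delta-not-squared} with $L=\mathcal{T}_{m,\vu}$ and with the deviation parameter $x + \log n$ in place of $x$, obtaining
\[
\mathbb{P}\!\left(\euclidnorm{\Pi_{\mathcal{T}_{m,\vu}}(\vg)} \le \sqrt{\delta(\mathcal{T}_{m,\vu})} + \sqrt{2(x+\log n)}\right) \ge 1 - e^{-(x+\log n)}.
\]
A union bound over the $n$ values of $m$ shows that all these bounds hold simultaneously on an event of probability at least $1 - n\cdot e^{-(x+\log n)} = 1 - e^{-x}$. On this event, taking the maximum over $m$ and multiplying by $\sigma$ delivers the claimed control on $Y_\vu$.

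The only nontrivial point --- really more a check than an obstacle --- is that $Y_\vu$ is actually the maximum of only $n$ supremum-type random variables, because $\mathcal{W}_\vu$ is a finite union of closed convex cones indexed by $m=1,\dots,n$, and that the scaling $\vxi = \sigma \vg$ passes through the supremum via positive homogeneity of cone projections. Once these are in place, the argument is a direct application of \eqref{eq:equivalence-delta} and \eqref{eq:concentration-delta-not-squared} combined with the union bound; no additional estimates (for instance, no metric entropy computation on the individual cones $\mathcal{T}_{m,\vu}$) are required at this stage, since the bound is expressed directly in terms of $\delta(\mathcal{T}_{m,\vu})$.
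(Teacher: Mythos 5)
Your argument is exactly the paper's: rewrite $Y_\vu$ as $\max_{m=1,\dots,n}\euclidnorm{\Pi_{\mathcal T_{m,\vu}}(\vxi)}$ via \eqref{eq:equivalence-delta}, apply \eqref{eq:concentration-delta-not-squared} to each cone $\mathcal T_{m,\vu}$ with deviation parameter $x+\log n$, and finish with a union bound over the $n$ cones. Note that what you (correctly) obtain is $\sigma\max_m \delta(\mathcal T_{m,\vu})^{1/2}+\sigma\sqrt{2(x+\log n)}$, i.e.\ the statement's $\delta(\mathcal T_{m,\vu})$ should read $\delta(\mathcal T_{m,\vu})^{1/2}$, as is consistent with how the lemma is used in \eqref{eq:concentration-Y-u}.
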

\begin{lemma}
    \label{lemma:T_m}
    If $\vu\in\mathcal U$
    then
    $\max_{m=1,...,n}\delta(\mathcal T_{m,\vu}) \le (k(\vu)+1) \log(\frac{en}{k(\vu)+1})$.
\end{lemma}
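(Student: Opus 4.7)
The plan is to show that $\mathcal T_{K_m,\vu}$ is contained in a product of at most $k(\vu)+1$ isotonic cones whose sizes sum to $n$, after which $\delta(\increasingsat_s)\le\log(es)$ from \eqref{eq:delta-increasings-logn}, the product formula \eqref{eq:delta-product}, and Jensen's inequality finish the argument. Writing $K_m$ as an intersection of half-spaces $\alpha_i^T\vu+\beta_i\ge 0$ for $i=1,\ldots,n-1$, where $\alpha_i$ encodes $u_i-u_{i+1}\ge 0$ when $i<m$ and $u_{i+1}-u_i\ge 0$ when $i\ge m$, a short limit argument shows that every $\vw\in\mathcal T_{K_m,\vu}$ satisfies $\alpha_i^T\vw\ge 0$ at every index $i$ at which the corresponding inequality is not strictly satisfied at $\vu$: if $t_n(\vv_n-\vu)\to\vw$ with $\vv_n\in K_m$ and $t_n\ge 0$, then $t_n\alpha_i^T(\vv_n-\vu)\ge -t_n(\alpha_i^T\vu+\beta_i)\ge 0$ whenever $\alpha_i^T\vu+\beta_i\le 0$. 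So $\mathcal T_{K_m,\vu}$ is contained in the polyhedral cone $L_{m,\vu}$ defined by these linear inequalities, and by \eqref{eq:delta-inclusion} it suffices to bound $\delta(L_{m,\vu})$.

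I would then read off the block structure of $L_{m,\vu}$ from the piece structure of $\vu$. Let $T_1,\ldots,T_k$ be the constant pieces of $\vu$ and $T_{j_*}$ the mode piece. Strictly satisfied constraints occur precisely at the $\vu$-piece boundaries whose trend agrees with the direction imposed by $K_m$ at that boundary---decreasing boundaries for $i<m$, increasing ones for $i\ge m$; these create the breaks that partition $\{1,\ldots,n\}$ into blocks on which $L_{m,\vu}$ decomposes as a product. A case analysis on the location of $m$ relative to $T_{j_*}$ shows that each block is $\mathcal S_{s_b}^\uparrow$, $\mathcal S_{s_b}^\downarrow$, or (in at most one block, namely the one containing $m$ when $m\in T_{j_*}$) a unimodal sub-cone. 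This last sub-cone of size $s$ with mode at relative position $r$ embeds into $\mathcal S_r^\downarrow\times\mathcal S_{s-r}^\uparrow$ by dropping the single inequality $w_m\le w_{m+1}$---the very inclusion already used in the proof of \Cref{thm:delta-convex}. Consequently $L_{m,\vu}$ embeds into a product of $k'$ isotonic cones of total size $n$ with $k'\le k+1$: when $m\in T_{j_*}$ the split of $T_{j_*}$ contributes one extra factor, whereas when $m$ lies outside $T_{j_*}$ the violated constraints of $K_m$ chain all $\vu$-pieces between $m$ and $T_{j_*}$ into a single block, yielding $k'\le k$.

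Combining \eqref{eq:delta-product}, \eqref{eq:delta-inclusion}, and \eqref{eq:delta-increasings-logn} then gives $\delta(\mathcal T_{K_m,\vu})\le\sum_{b=1}^{k'}\log(es_b)$; Jensen's inequality applied to the concave map $s\mapsto\log(es)$ upper-bounds this by $k'\log(en/k')$; and since $x\mapsto x\log(en/x)$ is nondecreasing on $(0,n]$ we conclude with $k'\log(en/k')\le(k+1)\log(en/(k+1))$. The main obstacle is the case analysis establishing $k'\le k+1$: the non-obvious point is that for $m$ outside the mode piece, the violated (rather than strictly satisfied) constraints of $K_m$ chain several consecutive $\vu$-pieces into a single block instead of creating extra breaks, so that the worst case $k'=k+1$ is realized exactly when $m\in T_{j_*}$ and splits it.
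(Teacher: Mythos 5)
Your proposal is correct in substance and, at its core, it is the same argument as the paper's: both embed $\mathcal T_{m,\vu}$ into a product of at most $k(\vu)+1$ monotone cones whose sizes sum to $n$, and then conclude with \eqref{eq:delta-inclusion}, \eqref{eq:delta-product}, \eqref{eq:delta-increasings-logn} and Jensen. The difference is only the route to that product. The paper checks directly that for any $\vv\in K_m$ the difference $\vv-\vu$ is non-increasing on each constant piece of $\vu$ to the left of the piece containing $m$, nondecreasing on each piece to its right, and non-increasing resp.\ nondecreasing on the two halves $A$, $B$ of the piece containing $m$; this gives the inclusion $\mathcal T_{m,\vu}\subset \mathcal S^\downarrow_{|T_1|}\times\cdots\times\mathcal S^\downarrow_{|A|}\times\mathcal S^\uparrow_{|B|}\times\cdots\times\mathcal S^\uparrow_{|T_k|}$ in two lines, with exactly $k+1$ factors in every case and no case analysis. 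You instead pass through the polyhedral cone $L_{m,\vu}$ of constraints of $K_m$ that are active or violated at $\vu$; that containment argument is valid (and $L_{m,\vu}$ is in fact contained in the paper's product cone), but it obliges you to analyze which boundaries break.

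One sub-claim in that analysis is inaccurate: a mixed (unimodal) block arises not only when $m\in T_{j_*}$, but whenever $m$ lies strictly inside its own constant piece; for instance, if $m$ is interior to a piece left of the mode piece of $\vu$, the block containing $m$ runs from the start of that piece through the end of $T_{j_*}$ and contains constraints of both types. This does not break your proof, for two reasons. First, your bound $k'\le k$ in that case still holds, because the kept (violated) boundary constraints merge at least two pieces of $\vu$ into that block, which absorbs the extra factor created by splitting it. Second, and more simply, the whole case analysis can be replaced by the observation that at most $k-1$ boundary constraints can be strictly satisfied at $\vu$, so there are at most $k$ blocks, at most one of which is mixed, hence at most $k+1$ isotonic factors in every case. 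With that repair, your Jensen and monotonicity finish coincides with the paper's.
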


\Cref{lemma:Y_vu} and \Cref{lemma:T_m} are proved in \Cref{s:lemma-T_m} below.
\Cref{lemma:Y_vu} is proved using
the union bound and 
\eqref{eq:concentration-delta-not-squared} applied to the cones $\mathcal T_{1,\vu},...,\mathcal T_{n,\vu}$,
while \Cref{lemma:T_m} is a straightforward consequence of 
\eqref{eq:delta-product}, \eqref{eq:delta-inclusion}
and \eqref{eq:delta-increasings-logn}.
The two Lemmas above yield that
\begin{equation}
    \mathbb P
    \left(
        Y_\vu
        \le
        \sigma\sqrt{(k(\vu)+1) \log(\tfrac{en}{k(\vu)+1}) }
        +
        \sigma \sqrt{2(x + \log n)}
    \right) \ge 1 -e^{-x}.
    \label{eq:concentration-Y-u}
\end{equation}
We a now ready to prove \Cref{thm:unimodal}.

\begin{proof}[Proof of \Cref{thm:unimodal}]
    Let $\hmu = \ls(\mathcal U)$ and $\hat R = \euclidnorm{\hmu - \vmu}$.
    Inequality \eqref{eq:ls-nonconvex} with $\vu=\vmu$ can be rewritten as
    $\hat R^2 \le 2 \vxi^T(\hmu - \vmu)$.
    It is clear that $(1/\hat R)(\hmu - \vmu)$ has norm 1 and belongs
    to $\mathcal V_* \coloneqq \cup_{m=1,...,n}\mathcal T_{m,\vmu}$. Thus
    \begin{equation}
        \hat R
        = \frac{\hat R^2}{\hat R}
        \le
        \frac{2\vxi^T(\hmu - \vmu)}{\hat R}
        \le
        2
        \sup_{\vv\in\mathcal W_\vmu: \euclidnorm{\vv}\le 1} \vxi^T\vv
        \le 2 Y_\vmu.
    \end{equation}
    The concentration inequality 
    \eqref{eq:concentration-Y-u}
    completes the proof.
\end{proof}

During the writing of the revision of the present article in which \Cref{thm:unimodal} was introduced,
we became aware of a similar result by
\citet{flammation2016seriation}
obtained independently
in the context of statistical seriation.
Interestingly, \Cref{thm:unimodal} and the result of \citet{flammation2016seriation} are proved using different techniques.
\Cref{thm:unimodal} is an outcome of the concentration inequality 
\eqref{eq:concentration-delta-not-squared}
and of upper bounds on the statistical dimension of tangent cones,
while \citet{flammation2016seriation} prove an oracle inequality
using metric entropy bounds and the variational representation studied in \cite{chatterjee2014new,chatterjee2015adaptive}.
An advantage of the proof presented above is that the numerical constants
of \Cref{thm:unimodal} are explicit and reasonably small.

The proof of \Cref{thm:unimodal} can be slightly modified to yield an oracle inequality.
The following Theorems recover the results of \citet{flammation2016seriation}.

\begin{thm}
    \label{thm:unimodal-soi}
    Let $\vmu\in\R^n$.
    Furthermore, 
    if $\vxi\sim\mathcal N(\vzero,\sigma^2 I_{n\times n})$
    then for all $x>0$, we have
\begin{equation}
    \scalednorm{\ls(\mathcal U) - \vmu}
    \le
    \min_{\vu\in\mathcal U}
    \left[
        \scalednorm{\vu - \vmu}
        +
        \frac{2\sigma}{\sqrt n} \left(
            \sqrt{(k(\vu)+1) \log(\tfrac{en}{k(\vu)+1}) }
            +
            \sqrt{2(x + \log n)}
        \right)
    \right]
\end{equation}
    with probability at least $1-e^{-x}$.
\end{thm}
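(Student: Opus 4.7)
The plan is to follow the same template as the proof of \Cref{thm:unimodal}, but decoupling the "test vector" $\vu\in\mathcal U$ from the unknown $\vmu$ (which is no longer assumed to lie in $\mathcal U$). Fix any deterministic $\vu\in\mathcal U$. Since $\hmu=\ls(\mathcal U)$ minimizes $\scalednorms{\cdot-\vy}$ over $\mathcal U$ and $\vu\in\mathcal U$, the non-convex LS inequality \eqref{eq:ls-nonconvex} together with $\vy=\vmu+\vxi$ gives
\begin{equation*}
\euclidnorms{\hmu-\vmu}
\le \euclidnorms{\vu-\vmu} + 2\vxi^T(\hmu-\vu).
\end{equation*}
Because $\hmu\in\mathcal U$, there is some $m_0$ with $\hmu\in K_{m_0}$, so $\hmu-\vu\in\mathcal T_{m_0,\vu}\subset\mathcal W_\vu$. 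Writing $\hat R=\euclidnorm{\hmu-\vu}$, the definition of $Y_\vu$ in \eqref{eq:def-Y-u} yields $\vxi^T(\hmu-\vu)\le \hat R\,Y_\vu$.

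Set $a=\euclidnorm{\hmu-\vmu}$ and $b=\euclidnorm{\vu-\vmu}$. The triangle inequality gives $\hat R\le a+b$, hence
\begin{equation*}
a^2\le b^2 + 2(a+b)\,Y_\vu.
\end{equation*}
Viewing this as a quadratic in $a$ and using $Y_\vu^2+2bY_\vu+b^2=(Y_\vu+b)^2$, we obtain $a\le 2Y_\vu+b$, that is
\begin{equation*}
\euclidnorm{\hmu-\vmu}\le \euclidnorm{\vu-\vmu}+2Y_\vu.
\end{equation*}
Now invoke the concentration bound \eqref{eq:concentration-Y-u} at the deterministic vector $\vu$: on an event $\Omega_\vu$ of probability at least $1-e^{-x}$,
\begin{equation*}
Y_\vu \le \sigma\sqrt{(k(\vu)+1)\log\tfrac{en}{k(\vu)+1}}+\sigma\sqrt{2(x+\log n)}.
\end{equation*}
Dividing the previous display by $\sqrt n$ and combining with the above shows that, for any fixed deterministic $\vu\in\mathcal U$, the claimed inequality holds on $\Omega_\vu$.

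To close the proof, take $\vu=\vu^*$ to be a minimizer (or near-minimizer, if the infimum is not attained) of the deterministic right-hand side over $\mathcal U$; this choice does not depend on the data, so $\Omega_{\vu^*}$ is a single event of probability at least $1-e^{-x}$ on which the announced oracle inequality is valid. The main subtlety, compared with the proof of \Cref{thm:unimodal}, is that the non-convexity of $\mathcal U$ prevents us from directly projecting $\hmu-\vu$ onto a single cone or applying a strong-convexity style identity; this forces us to retain the term $2\hat R\,Y_\vu$ and to extract the bound by completing the square in step three. Everything else reuses \Cref{lemma:Y_vu} and \Cref{lemma:T_m} exactly as in \Cref{thm:unimodal}, and the "deterministic oracle" selection in the last step is what makes a single high-probability statement possible without a uniform bound over $\vu\in\mathcal U$.
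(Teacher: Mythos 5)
Your proof is correct and follows essentially the same route as the paper: both reduce to the almost-sure bound $\euclidnorm{\ls(\mathcal U)-\vmu}\le\euclidnorm{\vu-\vmu}+2Y_\vu$ via the LS inequality \eqref{eq:ls-nonconvex}, the observation that $\hmu-\vu\in\mathcal W_\vu$, and the triangle inequality, then apply the concentration bound \eqref{eq:concentration-Y-u} at a deterministic minimizer $\vu$. The only difference is cosmetic: the paper divides $\hat R^2-R^2$ by $\hat R+R$ and bounds $\vxi^T\vtheta$ for the normalized vector $\vtheta=(\hmu-\vu)/(\hat R+R)$, while you bound $\vxi^T(\hmu-\vu)\le\euclidnorm{\hmu-\vu}\,Y_\vu$ and solve the resulting quadratic in $a$, which yields the same conclusion.
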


\begin{proof}[Proof of \Cref{thm:unimodal-soi}]
    Let $\hmu = \ls(\mathcal U)$ and let $\vu$ be a minimizer of the right hand side.
    We first prove that almost surely,
    \begin{equation}
        \scalednorm{\ls(\mathcal U) - \vmu}
        \le
        \min_{\vu\in\mathcal U}
        \left[
            \scalednorm{\vu - \vmu}
            +
            \frac{2 Y_\vu}{\sqrt n}
        \right].
        \label{eq:oi-unimodal-Y-u}
    \end{equation}
    Let $\hat R \coloneqq \euclidnorm{\hmu - \vmu}$
    and $R \coloneqq \euclidnorm{\vu - \vmu}$.
    The vector $\vtheta\coloneqq \frac{\hmu - \vu}{\hat R + R}$
    belongs to the set $\mathcal W_\vu$ defined in \eqref{eq:def-Y-u}
    and $\vtheta$  has norm at most one because of the triangle inequality
    $\euclidnorm{\hmu - \vu} \le \hat R + R$.
    Inequality \eqref{eq:ls-nonconvex} can be rewritten as $\hat R^2 - R^2 \le 2\vxi^T(\hmu - \vu)$ and thus
    \begin{equation}
        \hat R - R
        = 
        \frac{\hat R^2 - R^2}{\hat R + R} 
        \le 
        \frac{2\vxi^T(\hmu - \vu)}{\hat R + R}
        = 2\vxi^T\vtheta
        \le 2 Y_\vu.
    \end{equation}
    We have proved \eqref{eq:oi-unimodal-Y-u}.
    Applying \eqref{eq:concentration-Y-u} completes the proof 
of the Theorem.
\end{proof}
The oracle inequality of \Cref{thm:unimodal-soi} is sharp (that is, it has leading constant 1),
but it is an oracle inequality with respect to the loss $\scalednorm{\cdot}$ rather than to the squared loss $\scalednorms{\cdot}$.
An oracle inequality with respect to the loss $\scalednorms{\cdot}$ is stronger than an oracle inequality
with respect to the loss $\scalednorm{\cdot}$.
Indeed, if an estimator $\hmu$ satisfies $\scalednorms{\hmu - \vmu} \le \min_{\vu\in E}\scalednorms{\vu - \vmu} + \varepsilon$
for some set $E$ and some $\varepsilon>0$, then the inequality $\sqrt{a+b}\le \sqrt a + \sqrt b$ for all $a,b\ge 0$ yields
$\scalednorm{\hmu - \vmu} \le \min_{\vu\in E}\scalednorm{\vu - \vmu} + \varepsilon^{1/2}$.

An oracle inequality similar to that of \Cref{thm:unimodal-soi} can be
obtained for the nonparametric rate $n^{-2/3}$.
\begin{thm}
    \label{thm:unimodal-soi-n23}
    There exists an absolute constant $c>0$ such that the following holds.
    Let $\vmu\in\R^n$.
    If $\vxi\sim\mathcal N(\vzero,\sigma^2 I_{n\times n})$
    then for all $x>0$, we have
    \begin{equation}
        \scalednorm{\ls(\mathcal U) - \vmu}
        \le
        \min_{\vu\in\mathcal U}
        \left[
            \scalednorm{\vu - \vmu}
            +
            2 c \sigma \left(\frac{\sigma +V(\vu)}{\sigma n}\right)^{1/3}
        \right]
        + \frac{2 (2+\sqrt 2) \sigma \sqrt{x+\log(en)}}{\sqrt n}
        \label{eq:oi-unimodal-n23}
    \end{equation}
    with probability at least $1-e^{-x}$,
    where $V(\cdot)$ is defined in \eqref{eq:def-V}.
\end{thm}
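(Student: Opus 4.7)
The plan is to combine the almost-sure inequality
\begin{equation*}
    \scalednorm{\ls(\mathcal U) - \vmu} \le \scalednorm{\vu' - \vmu} + \frac{2 Y_{\vu'}}{\sqrt n},
\end{equation*}
valid for every $\vu' \in \mathcal U$ and extracted from the proof of \Cref{thm:unimodal-soi}, with a piecewise-constant approximation of the oracle $\vu \in \mathcal U$. For each integer $k$ I would construct $\vu_k \in \mathcal U$ that is piecewise constant with at most $k$ pieces and satisfies $\scalednorm{\vu_k - \vu} \le C V(\vu)/k$ for an absolute constant $C$. Such a $\vu_k$ is obtained by splitting $\vu$ at its mode, quantizing each of the two monotone parts into roughly $k/2$ equally spaced levels, and replacing each level set by its midpoint; the pointwise error is at most $V(\vu)/k$ on each side, and the result is still unimodal.

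Plugging $\vu' = \vu_k$ into the almost-sure inequality, using the triangle inequality $\scalednorm{\vu_k - \vmu} \le \scalednorm{\vu - \vmu} + C V(\vu)/k$, and invoking the high-probability bound \eqref{eq:concentration-Y-u} on $Y_{\vu_k}$ (valid because $k(\vu_k) \le k$) yields, with probability at least $1 - e^{-x}$,
\begin{equation*}
    \scalednorm{\ls(\mathcal U) - \vmu}
    \le \scalednorm{\vu - \vmu} + \frac{C V(\vu)}{k}
    + \frac{2\sigma \sqrt{(k+1)\log(en/(k+1))}}{\sqrt n}
    + \frac{2\sigma \sqrt{2(x+\log n)}}{\sqrt n}.
\end{equation*}
Balancing the approximation error $V(\vu)/k$ against the parametric stochastic term $\sigma\sqrt{k/n}$ gives $k \asymp (V(\vu)^2 n/\sigma^2)^{1/3}$ and produces the rate $\sigma^{2/3} V(\vu)^{1/3} n^{-1/3} = \sigma (V(\vu)/(\sigma n))^{1/3}$. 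The $\sigma$ inside $\sigma + V(\vu)$ in the theorem covers the regime $V(\vu) \lesssim \sigma$, in which case the balance forces $k = 1$ and $\vu_k$ is the mean of $\vu$.

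The main obstacle is the spurious factor $\sqrt{\log(en/(k+1))}$ coming from \eqref{eq:concentration-Y-u}, which would naively introduce a $\log^{1/3}(n)$ factor in the leading rate. To obtain the clean $n^{-1/3}$ rate claimed in the theorem, one must control $Y_\vu$ by a Dudley integral bound based on the metric entropy of monotone sequences of bounded total variation, namely $\log N(\epsilon, \increasings \cap \{\vv : V(\vv) \le V', \euclidnorm{\vv} \le t\}, \euclidnorm{\cdot}) \lesssim V' \sqrt n/\epsilon$ from \cite{gao2007entropy}, together with a union bound over the $n$ possible modes (which is absorbed into the additive $\sqrt{x + \log(en)}$ deviation term via Gaussian concentration \cite[Theorem 5.8]{boucheron2013concentration}). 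This mirrors the entropy-based strategy used to derive \Cref{cor:rate23} from \Cref{thm:isomorphic} in the isotonic case, and yields the advertised log-free rate $\sigma((\sigma + V(\vu))/(\sigma n))^{1/3}$.
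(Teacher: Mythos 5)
There is a genuine gap, and it sits exactly where you flag it. Your first two paragraphs (piecewise-constant approximation of the oracle plus the bound \eqref{eq:concentration-Y-u} on $Y_{\vu_k}$) can only produce a leading term of order $\sigma\bigl(V(\vu)\log(en)/(\sigma n)\bigr)^{1/3}$ after balancing, and this extra $\log^{1/3}(en)$ cannot be absorbed into the additive $\sigma\sqrt{x+\log(en)}/\sqrt n$ term (take $V(\vu)\asymp\sigma\sqrt n$ to see this), so that route proves a strictly weaker statement than \eqref{eq:oi-unimodal-n23}. The proposed repair does not work as stated: $Y_\vu$ in \eqref{eq:def-Y-u} is a supremum over a union of \emph{tangent cones} intersected with the unit ball, and a cone carries no total-variation scale, so the entropy bound $\log N(\epsilon)\lesssim V'\sqrt n/\epsilon$ of \cite{gao2007entropy} is of no help there — applied to the unit-ball slice of a monotone cone (where one only has $V(\vv)\le 2\euclidnorm{\vv}\le 2$) Dudley's integral gives an $n^{1/4}$ bound, much worse than $\sqrt{\log(en)}$. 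Moreover the $\sqrt{\log}$ in $Y_\vu$ is intrinsic: already for a constant $\vu$ the tangent cone contains $\increasings$, whose statistical dimension is $\asymp\log n$ by \eqref{eq:delta-sum1overk}, so no chaining argument can remove the logarithm from $Y_\vu$ itself.

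The log-free rate requires abandoning the unit-scale cone quantity and localizing \emph{around the oracle $\vu$ itself inside each convex piece $K_m$} at the radius $t\asymp\sigma(1+V(\vu)/\sigma)^{1/3}n^{1/6}$ of \eqref{eq:def-t-n16}; this is what the paper does. Concretely: split into the cases $\euclidnorm{\hmu-\vu}\le t$ (then the triangle inequality already gives the claim) and $\euclidnorm{\hmu-\vu}>t$, in which case $\euclidnorm{\hmu-\vmu}-\euclidnorm{\vu-\vmu}\le 2\vxi^T(\hmu-\vu)/\euclidnorm{\hmu-\vu}\le\max_m Z_m$ with $Z_m$ as in \eqref{eq:def-Z_m}. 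Each $\E[Z_m]$ is bounded by $t+2\sigma\sqrt{\log(en)}$ (\Cref{lemma:E-Z_m}): one splits the coordinates at the modes of $\vu$ and of $K_m$, uses convexity of $K_m$ to rescale a point with $\euclidnorm{\vv-\vu}>t$ onto the ball $\{\euclidnorm{\vtheta-\vu_T}\le t\}$ inside the relevant monotone cone, and then invokes the localized-width bound \eqref{eq:chaining-chatterjee} of \citet{chatterjee2014new} — note this bound is applied around $\vu$ with its true variation $V(\vu)$, not around a piecewise-constant surrogate, which is precisely why no approximation step and no extra logarithm appear in the leading term. Gaussian concentration plus a union bound over the $n$ modes then contributes only the additive $\sigma\sqrt{x+\log(en)}/\sqrt n$ term, as you anticipated. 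Your reference to the strategy behind \Cref{cor:rate23} points in the right direction, but note that \Cref{thm:isomorphic} cannot be invoked verbatim either, since $\mathcal U$ is not convex; the case analysis and the rescaling must be carried out within each $K_m$ separately, which is the substantive content of the paper's proof.
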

\begin{proof}
    Let $\hmu=\ls(\mathcal U)$ and let $\vu$ be a minimizer of the right hand side of the oracle inequality.
    Define
    \begin{equation}
        t\coloneqq c \sigma \left(1+\frac{V(\vu)}{\sigma}\right)^{1/3} n^{1/6},
        \label{eq:def-t-n16}
    \end{equation}
    where $c>0$ is the absolute constant from \eqref{eq:chaining-chatterjee}.
    Define the random variables $Z_1,...,Z_m$ by
    \begin{equation}
        Z_m \coloneqq
        \sup_{\vv\in K_m: \euclidnorm{\vv-\vu}>t}
        \frac{2\vxi^T(\vv - \vu)}{\euclidnorm{\vv- \vu}},
        \qquad
        m=1,...,n.
        \label{eq:def-Z_m}
    \end{equation}
    We will prove that the oracle inequality of the Theorem holds on the event
    \begin{equation}
        \max_{m=1,...,n}Z_m \le t + 2\sigma \sqrt{\log(en)} + \sqrt{2(x+\log n)}.
        \label{eq:event-Z_m}
    \end{equation}
    Let $\hat R=\euclidnorm{\hmu - \vmu}$
    and let $R = \euclidnorm{\vu - \vmu}$.
    If $\euclidnorm{\hmu - \vu} \le t$ then by the triangle inequality
    $\hat R - R \le \euclidnorm{\vu-\hmu} \le t$
    and the oracle inequality of the Theorem holds.
    On the other hand, if $\euclidnorm{\hmu - \vu} > t$ then
    using \eqref{eq:ls-nonconvex} and the triangle inequality  we have
    \begin{equation}
        \hat R - R =
        \frac{\hat R^2 - R^2}{\hat R + R}
        \le
        \frac{2\vxi^T(\hmu - \vu)}{\hat R + R}
        \le
        \frac{2\vxi^T(\hmu - \vu)}{\euclidnorm{\hmu - \vu}}
        \le
        \max_{m=1,...,n}
        2 Z_m.
    \end{equation}
    On the event \eqref{eq:event-Z_m}, the right hand side of the previous display is bounded
    from above by $2t + 2(2+\sqrt 2)\sqrt{x+\log(en)}$
    and the oracle inequality of the Theorem holds.

    It remains to bound the probability of the event \eqref{eq:event-Z_m}.
    By the concentration inequality for suprema of Gaussian processes and the union bound over $m=1,...,n$, we have
    with probability at least $1-e^{-x}$,
    for all $m=1,...,n$,
    \begin{equation}
        Z_m\le E[Z_m] + \sigma \sqrt{2(x+ \log n)},
        \qquad
        \max_{m=1,...,n}
        Z_m
        \le
        \max_{m=1,...,n}
        \E[Z_m]
        + \sigma\sqrt{2(x+\log n)}.
    \end{equation}
    It remains to bound $\E[Z_m]$ from above.
    \begin{lemma}
        \label{lemma:E-Z_m}
        For all $m=1,...,n$, the variable $Z_m$
        defined in \eqref{eq:def-Z_m} satisfies 
        \begin{equation}
            \label{eq:E-Z_m}
            \E[Z_m]
            \le t + 2 \sqrt{\log(en)}.
        \end{equation}
    \end{lemma}
    The proof of \Cref{lemma:E-Z_m} is given in \Cref{s:lemma-T_m}.
    This proves that the event \eqref{eq:event-Z_m}
    has probability at least $1-e^{-x}$.
\end{proof}

\subsection{
    Proof of \Cref{lemma:Y_vu},
    \Cref{lemma:T_m}
    and \Cref{lemma:E-Z_m}
}
\label{s:lemma-T_m}
\begin{proof}[Proof of \Cref{lemma:Y_vu}]
    Using \eqref{eq:equivalence-delta} we have
    $Y_\vu= \max_{m=1,...,n} \euclidnorm{\Pi_{\mathcal T_{m,\vu}}(\vxi)}$.
    We apply
    \eqref{eq:concentration-delta-not-squared}
    to $L= \mathcal T_{m,\vu}$ for all $m=1,...,n$.
    The union bound completes the proof.
\end{proof}

\begin{proof}[Proof of \Cref{lemma:T_m}]
    Let $m=1,...,n$,
    $k=k(\vu)$ and let
    $(T_1,...,T_{k})$ be a partition of $\{1,...,n\}$ such that
    $\vu$ is constant on each $T_l$ and $T_l$ is convex for all $l=1,...,k$.
    Let $l^*\in\{1,...,k\}$ be the unique integer such that $m\in T_{l^*}$,
    and let $T^* = T_{l^*}$.
    Let $\vv\in K_m$.
    Then for all $l <l^*$, the sequence $(\vv - \vu)_{T_l}$ is non-increasing
    and for all $l> l^*$, the sequence $(\vv - \vu)_{T_l}$ is nondecreasing. 
    Furthermore, if $A = T^* \cap \{1,...,m\}$ and $B = T^* \cap \{m+1,...,n\}$,
    the sequence $(\vv - \vu)_{A}$ is non-increasing and
    the sequence $(\vv - \vu)_{B}$ is nondecreasing.
    We have proved the inclusion
    \begin{equation}
        \mathcal T_{m,\vu} \subset 
        \mathcal C \coloneqq
        \mathcal S^\downarrow_{|T_1|}
        \times
        ...
        \times
        \mathcal S^\downarrow_{|T_{l^* -1}|}
        \times 
        \mathcal S^\downarrow_{|A|}
        \times 
        \mathcal S^\uparrow_{|B|}
        \times 
        \mathcal S^\uparrow_{|T_{l^*+1}|}
        \times
        ...
        \times
        \mathcal S^\uparrow_{|T_{k}|}.
    \end{equation}
    where for all integer $q\ge 1$, $\mathcal S^\uparrow_q$ is the cone of nondecreasing sequences in $\R^q$
    and $\mathcal S^\downarrow_q$ is the cone of non-increasing sequences in $\R^q$.
    Using \eqref{eq:delta-inclusion}, \eqref{eq:delta-product}
    and \eqref{eq:delta-increasings-logn} we obtain
    \begin{equation}
        \delta(\mathcal T_{m,\vu})
        \le
        \delta(\mathcal C)
        \le
        \log(e|A|)
        + \log(e|B|)
        +
        \sum_{l=1,...,k:l\ne l^*}
        \log( e|T_l|).
    \end{equation}
    Using Jensen's inequality with the fact that $|A| + |B| + \sum_{l=1,...,k:l\ne l^*} |T_l| = n$,
    we obtain
    $
        \delta(\mathcal T_{m,\vu})
        \le
        (k + 1) \log \frac{en}{k + 1}
    $.
\end{proof}

\begin{proof}[Proof of \Cref{lemma:E-Z_m}]
    Let $m=1,...,n$ be fixed.
    As $\vu\in\mathcal U$, there exists $m_\vu\in\{1,...,n\}$ such that $\vu \in K_{m_\vu}$.
    Define $k\coloneqq\min(m, m_\vu)$ and $j\coloneqq\max(m,m_\vu)$.
    Let $T \coloneqq \{1,...,k\}$, $E \coloneqq \{k+1,...,j-1\}$ and $ S \coloneqq \{j,...,n\}$.
    Then for all $\vv\in K_m$, by definition of $T,E$ and $S$ we have
    \begin{equation}
        \vv_T \in \mathcal S^\downarrow_{|T|},
        \vu_T \in \mathcal S^\downarrow_{|T|},
        \quad
        \vv_S \in \mathcal S^\uparrow_{|S|},
        \vu_S \in \mathcal S^\uparrow_{|S|},
        \quad
        (\vv - \vu)_E \in \mathcal S^\uparrow_{|E|}\cup\mathcal S^\downarrow_{|E|}.
    \end{equation}
    Thus, the quantity $\E[Z_m]$ is bounded from above by
    \begin{align}
        &\quad\E\sup_{\vv\in K_m: \euclidnorm{\vv-\vu}>t}
        (1/\euclidnorm{\vv- \vu}) \;\; \vxi_T^T(\vv - \vu)_T 
        \label{eq:Z_m:other}
        \\
        +
        &\quad\E\sup_{\vv\in K_m: \euclidnorm{\vv-\vu}>t}
        (1/\euclidnorm{\vv- \vu}) \;\; \vxi_S^T(\vv - \vu)_S 
        \label{eq:Z_m:other2}
        \\
        +
        &\quad\E\max_{L\in\{ \mathcal S^\uparrow_{|E|}, \mathcal S^\downarrow_{|E|}\}}\sup_{\vx\in L:\euclidnorm{\vx}\le 1} \vxi_E^T\vx.
        \label{eq:Z_m:delta}
    \end{align}
    By definition of the statistical dimension and using the fact that the maximum of two positive numbers
    is bounded from above by their sum, we have that \eqref{eq:Z_m:delta} is
    bounded from above
    by $\delta(\mathcal S^\uparrow_{|E|})^{1/2} + \delta(\mathcal S^\downarrow_{|E|})^{1/2}\le 2\log(e|E|)^{1/2}\le 2 \log(en)^{1/2}$.
    We now bound \eqref{eq:Z_m:other} from above, while \eqref{eq:Z_m:other2} can be bounded similarly.
    For all $\vv\in K_m$ such that $\euclidnorm{\vv-\vu} > t$,
    if
    $\alpha=t/\euclidnorm{\vv -\vu}$
    we have
    \begin{equation}
        (1/\euclidnorm{\vv- \vu})\vxi_T^T(\vv - \vu)_T
        = (\alpha/t) \vxi_T^T(\vv - \vu)_T
        = (1/t) \vxi_T^T(\alpha \vv + (1-\alpha)\vu - \vu)_T
    \end{equation}
    and $\alpha\in[0,1]$.
    By convexity we have
    $\vtheta = \alpha \vv_T + (1-\alpha)\vu_T\in\mathcal S^\downarrow_{|T|}$
    and 
    \begin{equation}
        \euclidnorm{\vtheta - \vu_T} = \alpha \euclidnorm{\vv_T - \vu_T} \le \alpha \euclidnorm{\vv - \vu} =  t.
    \end{equation}
    Thus, \eqref{eq:Z_m:other} is bounded from above by
    \begin{equation}
        \frac 1 t \E \sup_{\vtheta \in \mathcal S^\downarrow_{|T|}: \euclidnorm{\vtheta - \vu_T} \le t}
        \vxi_T^T(\vtheta - \vu_T).
    \end{equation}
    By \eqref{eq:chaining-chatterjee}, the previous display is bounded from above by $t/2$.
    Finally, we have
    \begin{equation}
        \E[Z_m]
        \le \eqref{eq:Z_m:other} + \eqref{eq:Z_m:other2} + \eqref{eq:Z_m:delta}
        \le t/2 + t/2 + 2 \sqrt{\log(en)}.
    \end{equation}
\end{proof}

\subsection{
    Proofs of \Cref{thm:convex-unimodal,thm:convex-rate23}
}
\label{s:proofs-convex-unimodal}

\begin{proof}[Proof of \Cref{thm:convex-unimodal}]
    Let $\vu\in\mathcal U$ be a unimodal sequence.
    Define the random variable
    \begin{equation}
        G \coloneqq \vxi^T(\vu - \vmu) / \euclidnorm{\vu - \vmu}
        \quad
        \text{ if } \vu\ne\vmu
        \text{ and } G = 0 \text{ otherwise}.
        \label{eq:def-G}
    \end{equation}
    Let $Y_\vu$ be defined in \eqref{eq:def-Y-u}.
    Let $\tilde R \coloneqq \euclidnorm{\hmu - \vmu}$
    and $R \coloneqq \euclidnorm{\vu - \vmu}$.
    We first prove that almost surely,
    \begin{equation}
        \scalednorm{\hmu - \vmu}
        \le
        \max\left(
            \scalednorm{\vu - \vmu},
            \frac{2Y_\vu + G}{\sqrt n}
        \right)
        \label{eq:claim:Y_u}
    \end{equation}
    It is enough to prove that
    $\tilde R>R$ implies $\tilde R\le 2Y_\vu+G$.
    Assume that $\tilde R>R$.
    Inequality \eqref{eq:kkt-noise} yields
    $
    \tilde R^2
    \le
    \vxi^T(\hmu - \vmu)
    $ and thus
    \begin{equation}
    \tilde R^2
    \le
    \vxi^T(\hmu - \vmu)
    =
    \frac{\vxi^T(\hmu - \vu)}{\euclidnorm{\hmu - \vu}} \euclidnorm{\hmu - \vu} + G \euclidnorm{\vu - \vmu}
    \le Y_\vu \euclidnorm{\hmu - \vu} + G R.
    \end{equation}
    where we used that $\hmu\in\mathcal U$ since a convex
    sequence is unimodal.
    We have $R < \tilde R$ and
    by the triangle inequality, 
    $\euclidnorm{\hmu - \vu} \le \tilde R + R 
    < 2 \tilde R$,
    which proves that $\tilde R^2 \le 2 Y_\vu \tilde R + G \tilde R$. 
    Dividing by $\tilde R$ completes the proof of \eqref{eq:claim:Y_u}.

    We now prove the oracle inequality.
    Since $G$ is centered Gaussian
    with variance at most $\sigma^2$ we have $\mathbb P(G>\sigma\sqrt{2(x+\log n)})\le e^{-x}/n\le e^{-x}$.
    The concentration inequality \eqref{eq:concentration-Y-u} and the union bound completes the proof.
\end{proof}
\begin{proof}[Proof of \Cref{thm:convex-rate23}]
    Let $\vu$ be a minimizer of the right hand side.
    Let $\tilde R \coloneqq \euclidnorm{\hmu - \vmu}$
    and $R \coloneqq \euclidnorm{\vu - \vmu}$.
    Define $t>0$ by \eqref{eq:def-t-n16}
    and define the random variables $Z_1,...,Z_n$
    by \eqref{eq:def-Z_m}.
    We first prove that almost surely,
    \begin{equation}
        \tilde R
        \le
        \max\left(
            R + t,
            \;\;
            2 \max_{m=1,...,n}Z_m + G
        \right).
        \label{eq:claim-convex-Z_m}
    \end{equation}
    It is enough to prove that $\tilde R > R + t$ implies
    $\tilde R \le 2 \max_{m=1,...,n} Z_m + G$.
    If $\tilde R > R + t$ then
    by the triangle inequality,
    $
    \euclidnorm{\hmu - \vu}\ge
        \tilde R - R
        > t + R - R
        = t
    $, so that
    by \eqref{eq:kkt-noise} we have
    \begin{equation}
        \tilde R^2 \le 
        \vxi^T(\hmu - \vmu)
        \frac{\vxi^T(\hmu - \vu)}{\euclidnorm{\hmu - \vu}} \euclidnorm{\hmu - \vu} + \frac{\vxi^T(\vu - \vmu)}{\euclidnorm{\vu - \vmu}} R
        \le
        \left(\max_{m=1,...,n}Z_m\right) \euclidnorm{\hmu - \vu}
        + G R,
    \end{equation}
    where $G$ is the random variable \eqref{eq:def-G}.
    If $\tilde R > R+t$ we have
    $R\le \tilde R$
    and
    $\euclidnorm{\hmu - \vu}\le \tilde R + R \le 2\tilde R$,
    so that dividing both sides of the previous display by $\tilde R$
    yields
    that
    $\tilde R \le 2 \max_{m=1,...,n}Z_m + G$.
    The proof of \eqref{eq:claim-convex-Z_m} is complete.

    We now prove the probability estimate.
    Since $G$ is centered Gaussian
    with variance at most $\sigma^2$ we have $\mathbb P(G>\sigma\sqrt{2(x+\log(en) )})\le e^{-x}/(en)\le e^{-x}$.
    In the proof of
    \Cref{thm:unimodal-soi-n23},
    we showed that the event \eqref{eq:event-Z_m}
    has probability at least $1-e^{-x}$.
    The union bound completes the proof.
\end{proof}

\section{Proof of \Cref{cor:rate23} and \Cref{cor:rate45}}
\label{s:proof-iso}

The following result is established in \citet{chatterjee2014new}.
\begin{lemma}[\cite{chatterjee2014new}]
    There exists an absolute constant $c>0$ such that the following holds.
    Let $\vu \in \increasings$ and $\vxi\sim\mathcal N(\vzero,\sigma^2 I_{n\times n})$.
    Then
    \begin{equation}
        \E \sup_{\vtheta \in\increasings:\euclidnorm{\vtheta - \vu}\le 1}
        \vxi^T(\vtheta - \vu)
        \le \frac{t^2}{2}
        \quad
        \text{ for all }
        t\ge c \sigma \left(1+\frac{V(\vu)}{\sigma}\right)^{1/3} n^{1/6}
        .
        \label{eq:chaining-chatterjee}
    \end{equation}
\end{lemma}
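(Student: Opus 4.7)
The bound (read with the localization $\euclidnorm{\vtheta-\vu}\le t$, which matches its use in the proof of \Cref{thm:unimodal-soi-n23}) is the classical localized Gaussian-width estimate for the isotonic cone that is the chaining heart behind the $n^{-2/3}$ rate. It is attributed to \citet{chatterjee2014new}, and the plan is to rederive it via metric-entropy chaining adapted to the simultaneous localization in $\ell_2$ and in total variation.

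First I would rewrite the quantity as $\sigma\,\E\sup_{\vv\in M_t}\vg^T\vv$ with $M_t = \{\vv\in\Rn : \vu+\vv\in\increasings,\ \euclidnorm{\vv}\le t\}$ and $\vg\sim\mathcal N(\vzero, I_{n\times n})$. The key geometric observation is that every $\vv\in M_t$ satisfies $|v_1|,|v_n|\le\euclidnorm{\vv}\le t$, so $\vtheta=\vu+\vv$ is nondecreasing with $V(\vtheta)=(u_n+v_n)-(u_1+v_1)\le V(\vu)+2t$. Consequently $M_t$ lies inside the $t$-neighborhood (in $\euclidnorm{\cdot}$) of $\vu$ within the set of nondecreasing sequences of total variation at most $V(\vu)+2t$.

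Second, I would apply Dudley's entropy integral with the classical Birman--Solomjak / van de Geer covering-number bound, $\log N(\epsilon,\{\vtheta\in\increasings : V(\vtheta)\le R\},\euclidnorm{\cdot}) \le C_0\,R\sqrt{n}/\epsilon$, specialized to $R = V(\vu)+2t$. A direct application yields an estimate of the form $\E\sup_{\vv\in M_t}\vg^T\vv \lesssim \sqrt{(V(\vu)+t)\sqrt n\, t}$, which after imposing the bound $t^2/(2\sigma)$ already produces the correct threshold $\sigma^{2/3}V(\vu)^{1/3}n^{1/6}$ in the regime $V(\vu)\gtrsim t$. In the complementary regime $V(\vu)\ll t$ the same plain estimate only delivers the suboptimal $\sigma n^{1/4}$, so to obtain the missing $\sigma n^{1/6}$ term I would couple the entropy estimate with the dimensional bound $\E\sup_{\vv\in M_t}\vg^T\vv \le t\sqrt{\delta(\mathcal T_{\increasings,\vu})}$ coming from \eqref{eq:equivalence-delta}, via a dyadic peeling of $M_t$ into annuli $\{\vv : \euclidnorm{\vv}\in[2^{-k-1}t,\, 2^{-k}t]\}$. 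On each annulus one has the refined constraint $V(\vu+\vv)\le V(\vu)+2^{1-k}t$, and taking the minimum of the two controls at each scale before resumming recovers the Chatterjee-type bound $\sigma\,C_1(1+V(\vu)/\sigma)^{1/3}n^{1/6}\, t$.

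Finally, imposing this estimate to be $\le t^2/2$ yields $t\ge c\sigma(1+V(\vu)/\sigma)^{1/3}n^{1/6}$, which is the claimed condition. The main obstacle is precisely the sharpening in the small-$V(\vu)$ regime: a one-shot Dudley bound relying only on the total-variation entropy produces the wrong power of $n$ when $V(\vu)$ is small, and the correct $n^{1/6}$ threshold requires combining two qualitatively different controls — entropy of bounded-variation sequences for the bulk of the annuli and statistical dimension of the tangent cone for the smallest scales — which is the technical heart of the argument in \cite{chatterjee2014new}.
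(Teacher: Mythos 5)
You should first note that the paper does not prove this lemma at all: it is imported as a black box from \citet{chatterjee2014new} (and, as you rightly observed, the radius in the display should be $t$, not $1$), so your attempt must stand on its own as a reconstruction. Its first half does: Dudley's bound with the Birman--Solomjak entropy estimate for monotone sequences of bounded variation (legitimate here because the localization confines $\vtheta$ to an interval of length $V(\vu)+2t$) gives $\E\sup\vxi^T(\vtheta-\vu)\lesssim\sigma\sqrt{(V(\vu)+t)\sqrt{n}\,t}$, and this certifies the condition $\le t^2/2$ in the regime $V(\vu)\gtrsim t$, i.e.\ essentially when $V(\vu)\gtrsim\sigma n^{1/4}$.

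The repair you propose in the complementary regime, however, does not work, and that regime is where the content of the lemma lies. The dimensional control $\E\sup_{\vv\in M_t}\vg^T\vv\le t\,\delta(\mathcal T_{\increasings,\vu})^{1/2}$ is governed by the number of constant pieces $k(\vu)$, not by $V(\vu)$: for any strictly increasing $\vu$ the tangent cone $\mathcal T_{\increasings,\vu}$ is all of $\R^n$, so $\delta(\mathcal T_{\increasings,\vu})=n$ even when $V(\vu)$ is tiny. Take $u_i=i\sigma/n$, so that $V(\vu)=\sigma$ and the claimed threshold is of order $\sigma n^{1/6}$: on the outermost annulus (radius of order $t$) your two controls are of order $\sigma t n^{1/4}$ (entropy, since there $V(\vu)\le t$) and $\sigma t\sqrt{n}$ (statistical dimension), and peeling cannot help because that single annulus already forces the minimum $\sigma t n^{1/4}$; your argument therefore only certifies the condition for $t\gtrsim\sigma n^{1/4}$, not $t\gtrsim\sigma n^{1/6}$. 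The missing idea is to localize not at $\vu$ but at a piecewise-constant monotone approximant $\vu'$ with $k$ pieces and $\inftynorm{\vu-\vu'}\le V(\vu)/(2k)$ (the approximation lemma from \cite{bellec2015sharp} recalled in the appendix): since $\E\sup_{\vtheta\in\increasings:\euclidnorm{\vtheta-\vu}\le t}\vxi^T(\vtheta-\vu)=\E\sup_{\vtheta}\vxi^T(\vtheta-\vu')$ and the constraint set is contained in a ball of radius $t+\sqrt{n}\,V(\vu)/(2k)$ around $\vu'$, one can use $\delta(\mathcal T_{\increasings,\vu'})\lesssim k\log(en/k)$ and optimize $k\asymp\sqrt{n}\,V(\vu)/t$, which yields a bound of order $\sigma\sqrt{t\sqrt{n}\,(V(\vu)+\sigma)}$ up to logarithmic factors and hence the stated threshold up to logs; the cited proof of \citet{chatterjee2014new} is a sharper blockwise version of this idea that removes the spurious logarithms. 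In short, the complexity parameter relevant at scale $t$ is that of a sparse approximant tuned to $t$, not $\delta(\mathcal T_{\increasings,\vu})$; without it your chaining stalls at $\sigma n^{1/4}$.
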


\begin{proof}[Proof of \Cref{cor:rate23}]
    Combining \eqref{eq:chaining-chatterjee}
    and \Cref{thm:isomorphic} completes the proof.
\end{proof}
\begin{proof}[Proof of \Cref{cor:rate45}]
    Let $\vu$ be a minimizer of the right hand side
    of the oracle inequality of \Cref{cor:rate45}.
    By rescaling we may assume that $\sigma=1$.
    Let $R=R_{\vu}$.
    As in the proof of \Cref{cor:rate23},
    we apply \Cref{thm:isomorphic}.
    Let $r>0$.
    Let $S(\vu,r) = \{\vv\in\convexs, \scalednorm{\vv-\vu} \le r\}$.
    By Dudley entropy bound (cf. \cite[Corollary 13.2]{boucheron2013concentration}),
    we obtain
    \begin{align}
    \E \sup_{\vv\in S(\vu,r)} 
    \frac{
        \vxi^T (\vv-\vu)
    }{\sqrt n}
    & \le 12 \int_0^r \sqrt{\log M(\epsilon, S(\vu,r), \scalednorm{\cdot})} d\epsilon, \\
    & \le \bar\kappa \log(en)^{5/8} r^{3/4}(r^2 + R^2)^{1/8},
    \end{align}
    where $\bar\kappa>0$ is an absolute constant, 
    $M(\epsilon, S(\vu,r), \scalednorm{\cdot})$ is the $\epsilon$-entropy
    of $S(\vu,r)$ in the $\scalednorm{\cdot}$ norm,
    and the second inequality is proved in \cite[(25)]{guntuboyina2013global}.
    The constant $1/\sqrt{n}$ on the left hand side
    due to the fact that the Gaussian process is normalized with respect to the metric $\scalednorm{\cdot}$, i.e.,
    for all vectors $\vv,\vv'$,
    $\scalednorms{\vv-\vv'} =\E[ ( \vxi^T(\vv-\vv') / \sqrt{n})^2]$.
    Let now $t = r\sqrt{n}$.
    After rearranging, the previous inequality becomes
    \begin{equation}
    \E \sup_{\vv\in \convexs:\; \euclidnorms{\vv-\vu}\le t} 
        \vxi^T (\vv-\vu)
    \le
        \bar\kappa \log(en)^{5/8} n^{1/8} t^{3/4}\left(\frac{t^2}{n} + R^2\right)^{1/8}.
    \label{eq:RHS-rate45-tobound}
    \end{equation}
    Let 
    \begin{equation}
        t_* = \left(2 \bar\kappa 2^{1/8}\right)^{4/5}\sqrt{\log(en)}R^{1/5} n^{1/10},
    \end{equation}
    and choose the absolute constant
    $\kappa \coloneqq 4 \bar\kappa^2 2^{1/4}$.
    With this choice of $\kappa$ and $t_*$,
    \eqref{eq:rate45condition} is equivalent to $t_*^2 / n \le R^2$.
    Thus, for $t=t_*$, the right hand side of 
    \eqref{eq:RHS-rate45-tobound}  does not exceed
    \begin{equation}
        \bar\kappa2^{1/8} \log(en)^{5/8} n^{1/8} t_*^{3/4} \le t_*^2 /2.
    \end{equation}
    Applying \Cref{thm:isomorphic} completes 
    the proof of \Cref{cor:rate45}.
\end{proof}

\smallskip

{\bf Acknowledgement.}
The author thanks Alexandre Tsybakov for helpful comments
and Philippe Rigollet
for informing him of the unimodal regression result 
of \citet{flammation2016seriation}.
This work was supported by GENES and by the French National Research Agency (ANR) under the grants 
IPANEMA (ANR-13-BSH1-0004-02), and Labex ECODEC (ANR - 11-LABEX-0047).

\bibliographystyle{plainnat}
\bibliography{db}

\end{document}

\section{Other cones}

\begin{example}[$\beta$-constrained sequences]
\label{example:beta}
For any positive integer $\beta<n$, consider the weights vector
$\vomega^{[\beta]} = (\omega^{[\beta]}_0,...,\omega^{[\beta]}_\beta)^T\in\mathbf{R}^{\beta+1}$ defined by
\begin{equation}
    \omega^{[\beta]}_k = (-1)^{\beta - k} \binom{\beta}{k},
    \qquad
    k=0,...,\beta.
    \label{eq:vomega-beta}
\end{equation}
Define the cone 
\begin{align}
    \conebeta
    &\coloneqq
    \{ \vu=(u_1,\dots,u_n)^T\in\Rn: \; (\vomega^{[\beta]})^T (u_i,u_{i+1},...,u_{i+\beta})^T\ge 0, \  i=1,\dots,n-\beta \}, \\
    &\coloneqq
    \{ \vu=(u_1,\dots,u_n)^T\in\Rn: \; D_{n-\beta+1}D_{n-\beta + 2}...D_n\vu \ge \vzero = (0,...,0)^T \},
\end{align}
where $D_{n-\beta+1},...,D_n$ are the matrices defined in
\eqref{eq:def-Dmatrix}.
In particular, we have
$\vomega^{[1]} = (-1,1)^T$
and
$\mathcal{S}^{[1]} = \increasings$ is the cone of
nondecreasing sequences,
$\vomega^{[2]} = (1,-2,1)^T$
and $\mathcal{S}^{[2]} = \convexs$ is the cone
of convex sequences.
%If $\design$ is the matrix defined in \eqref{eq:design},
%then
%\begin{equation}
%\label{eq:def-Sbeta-design}
%    \conebeta
%    =
%    \left\{
%        \design^\beta \vtheta,
%        \qquad
%        \vtheta=(\theta_1,...,\theta_n)^T\in\Rn:
%        \quad
%        \theta_k \ge 0 \text{ for all } k\ge \beta+1
%    \right\}.
%\end{equation}
The notation $\beta$ has been chosen to highlight 
the similarity between the cones
$\conebeta$
and $\beta$-smoothness classes
in nonparametric statistics.
In univariate regression for instance,
the minimax rate of estimation under the loss \eqref{eq:loss}
for smoothness classes such as Hölder or Sobolev balls
is proportional to $n^{-2\beta/(2\beta+1)}$,
where $\beta$ is the smoothness of the class.
Inequalities \eqref{eq:rate23-example-nonoi}
and \eqref{eq:rate45-example-nonoi} show 
that the rates of convergence of the Least Squares estimator
over the cones $\conebeta$
under the loss \eqref{eq:loss}
are $n^{-2\beta/(2\beta+1)}$ up to logarithmic factors for $\beta=1,2$.

If $\vu=(u_1,...,u_n)^T\in\conebeta$,
we say that $\vu$ is a piecewise polynomial function of degree $d$ with $k$ pieces
if there exist
polynomials $Q_1,...,Q_k$ of degree at most $d$ 
and
a partition $(T_1,...,T_k)$ of $\{1,...,n\}$
such that
\begin{equation}
    u_i = Q_j(i),
    \qquad
    i\in T_j,
    \qquad
    j=1,...,k.
    \label{eq:polynomial}
\end{equation}
If $\vu\in\conebeta$, define $s_\beta(\vu)\ge 1$
as the smallest integer $s$ such that
$\vu$ is a piecewise polynomial function of degree $\beta-1$ with $s$ pieces.
%Then
%\begin{equation}
%s_\beta(\vu)-1
%\le
%\left| \left\{
%        i=1,...,n-\beta:
%        \quad
%        (u_i,u_{i+1},...,u_{i+\beta})\vomega^{[\beta]} > 0
%    \right\}
%\right|.
%\end{equation}
%If $\vu = \design^\beta \vtheta$ with $\vtheta\ge \vzero$ as in \eqref{eq:def-Sbeta-design},
%then $s_\beta(\vu) - 1 \le |\{ i=\beta+1,...,n:\theta_i>0 \}|$.
Note that $s_1(\cdot) = k(\cdot)$ for nondecreasing sequences,
and $s_2(\cdot) = q(\cdot)$ for convex sequences.
The cone $\conebeta$ is endowed with with
the lineality space
\begin{equation}
    \lineality{\conebeta} 
    = \{\vu\in\conebeta: s_\beta(\vu) = 1 \},
\end{equation}
which is
the subspace of polynomials of degree at most $\beta - 1$.
In \Cref{s:delta} we will derive upper bounds on
the statistical dimension of the cones $\conebeta$
for all $\beta\ge 2$.
These bounds lead to sharp oracle inequalities for
$\ls(\conebeta)$
similar 
to \eqref{eq:increasings-adapt-example-oi}
and \eqref{eq:convexs-adapt-example-oi}.
\end{example}

\begin{example}[$m$-monotone sequences]
    \label{example:monotone}
    Define the linear 
    operator $\nabla:\Rn\rightarrow\Rn$
    by
    \begin{equation}
        \nabla \vu
        = (u_2-u_1,...,u_n-u_{n-1}, -u_n),
        \qquad
        \vu=(u_1,...,u_n)^T \in\Rn.
    \end{equation}
    If $m\ge0$ is an integer,
    the cone of $m$-monotone sequences is defined as
    \begin{equation}
        \mathcal M^m_n = \{\vu\in\Rn: \nabla^m \vu \ge 0 \}.
    \end{equation}
    For density estimation,
    $m$-monotone functions have been
    studied in \cite{balabdaoui2007estimation,balabdaoui2010estimation}.
    Simple algebra shows that $\mathcal M^0_n = \R^{n+}$
    and $\mathcal M^1_n=\increasings\cap ( - \R^{n+})$,
    where $\R^{n+}$ is the nonnegative orthant.
    For $m=2$,    $\mathcal M^2_n=\convexs \cap ( - \increasings) \cap \R^{n+}$
    is the cone of convex, non-increasing and nonnegative sequences.
    For all $m\ge 1$, we have
    \begin{equation}
        \mathcal M^m_n = \cap_{l=0}^m \left( (-1)^{m-l} \conebetaat{l}\right),
        \label{eq:monotone-cap}
    \end{equation}
    with the convention $\conebetaat{0} = \R^{n+}$.
    This implies that $\mathcal M^m_n \subset(-1)^{m-1} \increasings$.
    Using the monotonicity of the statistical dimension \eqref{eq:delta-inclusion}, 
    we obtain $\delta(\mathcal M^m_n) \le \delta((-1)^{m-1}\increasings) = \delta(\increasings) \le \log(en)$.
    This simple monotonicity argument cannot
    be used to bound from above the statistical dimension of the cones
    $\convexs$ or $\conebeta$ defined above.
    For all $m\ge 0$, the cone $\mathcal M^m_n$ contains no linear subspace and
    \begin{equation}
        \lineality{\mathcal M^m_n} = \{\vzero \}.
    \end{equation}
    This is opposed to the cones $\conebeta$,
    since the lineality space of $\conebeta$ has dimension $\beta$.
\end{example}

\begin{example}[Cone with arbitrary weights vector $\vomega$]
\label{example:weights}
Given a positive integer $m$ with $m\le n$
and a vector of weights $\vomega\in\Rm$,
define the cone $\K_n^\vomega$ by 
\begin{multline}
    \K^\vomega_n =
    \Big\{
            \vu=(u_1,\dots,u_n)^T\in\Rn: \; \vomega^T(u_i,u_{i+1},...,u_{i+m-1})^T\ge 0 \\ 
            \quad \text{ for all }  i=1,\dots,n-m+1
    \Big\}.
\end{multline}
The cone $\K^\vomega_n$ is a closed and convex subset of $\Rn$.
The sets $\increasings$, $\convexs$
and $\conebeta$ defined above
are examples of cones of this form.
\end{example}

\begin{example}[Polyhedral cones]
\label{example:polyhedral}
All the examples above are particular cases of convex polyhedral cones.
Given a matrix $A$ with $r$ rows and $n$ columns, define 
the cone
\begin{equation}
    \C_A =
    \{
            \vu\in\Rn,
            \quad
            A \vu
            \le \vzero = (0,...,0)^T
    \},
\end{equation}
where $\le$ denotes the component wise comparison in $\mathbf{R}^r$.
The polyhedral cones in $\Rn$ are the sets of
the form $\C_A$ where $A$ is a matrix with $n$ columns.
The lineality space of $\C_A$ is the kernel of matrix $A$.
The cone of nondecreasing sequences
and the cone of convex sequences are polyhedral cones
with $\increasings = \C_{-D_n}$ and $\convexs = \C_{-D_{n-1}D_n}$.
For cones of higher order defined in \Cref{example:beta}, we have
$\conebeta = \C_{A_\beta}$ where 
\begin{equation}
    A_\beta=-D_{n-\beta+1}D_{n-\beta+2}...D_n.
\end{equation}
Given a vector $\vomega\in\Rm$ as in
\Cref{example:weights},
the cone
$\K^\vomega_n$ satisfies
$\K^\vomega_n=\C_{A_\vomega}$ where
$A_\vomega = (a_{ij})_{i=1,...,n;j=1,...,n-m+1}$
is the matrix
\begin{equation}
    a_{ij}= - \omega_{j+i-1} \quad\text{ if } 1\le i+j-1\le m \qquad \text{and} \qquad a_{ij}= 0 \quad \text{ otherwise}.
\end{equation}
\end{example}

%\begin{example}[Convex hull of $M$ points]
%Let $\vmu_1,...,\vmu_M \in \Rn$.
%Denote by $\C(\vmu_1,...,\vmu_M)$
%the convex hull of $\vmu_1,...,\vmu_M$.
%This set is closed and convex in $\Rn$.
%Although most of the bounds presented in this paper will concern Least Squares on convex cones,
%we will see that the method developed in \Cref{s:uniform-soi}
%yields a sharp oracle inequality for $\ls(\C(\vmu_1,...,\vmu_M))$.
%This oracle inequality implies that $\ls(\C(\vmu_1,...,\vmu_M))$
%achieves the minimax rate of convex aggregation
%\cite{tsybakov2003optimal,lecue2013empirical,tsybakov2014aggregation}.
%\end{example}

\section{Results for other cones}

\subsection{Minimax regret bounds for $\conebeta$}

The argument behind \Cref{thm:delta-convex}
can be used 
to recursively control the statistical dimensions
of the cones $\conebeta$ for $\beta\ge 3$.

\begin{thm}
    \label{thm:delta-beta}
    Let $\beta,n$ be integers such that $1 \le \beta < n$.
    Then
    \begin{equation}
        \label{eq:bound-delta-beta}
        \delta(\conebeta)
        =
        \E \euclidnorms{\Pi_{\conebeta}(\vg)} \le C(\beta) \log(en),
    \end{equation}
    where
    $\vg\sim\mathcal N (\vzero,I_{n\times n})$
    and
    $C(\beta) = 3\cdot4^{\beta-1} - 2$.
\end{thm}

\subsubsection{Proofs: Upper bounds on statistical dimensions of cones}
\label{s:delta}

For any $k=1,...,n-1$,
let $S_k=\{1,...,k\}$ and $T_k=\{k+1,...,n\}$.
For any subset $T\subset\{1,...,n\}$ and any vector $\vtheta\in\Rn$,
denote by $\vtheta_T\in\mathbf{R}^T$ the restriction of $\vtheta$ to $T$.
For any $\vg,\vtheta\in\Rn$ and any $k=1,...,n-1$,
\begin{equation}
    \vg^T\vtheta = 
    \vg_{T_k}^T\vtheta_{T_k}
    +
    \vg_{S_k}^T\vtheta_{S_k}
    .
\end{equation}
For any closed convex cone $\K$, denote  by $\Pi_{\K}$ the projection onto $\K$.

\begin{lemma}
    \label{lemma:delta}
    Let
    $\K\subset\Rn$ be a closed convex cone.
    Assume that there exists a collection
    $\{(\coneleft_k,\coneright_{n-k}), k=1,...,n-1\}$ where
    $\coneleft_k\subset \R^k,\coneright_{n-k}\subset \R^{n-k}$
    are closed convex cones such that the following holds.
    For all $\vtheta\in\K$,
    there exists $k\in\{1,...,n-1\}$ such that
    \begin{equation}
        \vtheta_{S_k}\in\coneleft_k,
        \qquad
        \vtheta_{T_k}\in\coneright_{n-k}.
        \label{eq:vtheta-Sk-Tk}
    \end{equation}
    Then
    \begin{equation}
        \delta(\K)
        \le 2 d^* + 6\log(n-1),
    \qquad
    \text{where}
    \quad
    d^* \coloneqq \max_{k=1,...,n-1}\left[
                \delta(\coneleft_k) + 
                \delta(\coneright_{n-k})
        \right].
    \end{equation}
\end{lemma}

\begin{proof}
    Let $\vg\sim\mathcal N (\vzero,I_{n\times n})$.
    If $\vtheta\in\K$ is such that 
    \eqref{eq:vtheta-Sk-Tk}
    holds for some $k=1,...,n-1$,
    then
    by the Cauchy-Schwarz inequality,
    \begin{equation}
        \vtheta^T \vg
        =
        \vtheta^T_{S_k} \vg_{S_k}
        +
        \vtheta^T_{T_k} \vg_{T_k}
        \le
        \sqrt{
            \euclidnorms{\vtheta^T_{S_k}}
            +
            \euclidnorms{\vtheta^T_{T_k}}
        }
        \sqrt{
            Z^{L}_k
            +
            Z^{R}_{n-k}
        }
        =
        \euclidnorm{\vtheta}
        \sqrt{
            Z^{L}_k
            +
            Z^{R}_{n-k}
        },
        \label{eq:take-supr-theta}
    \end{equation}
    where we used 
    the notation
    \begin{equation}
            Z^{L}_k
            \coloneqq
            \left(
                \sup_{\vu\in\coneleft_k: \euclidnorms{\vu} \le 1}
                \vg_{S_k}^T\vu
            \right)^2
            ,
            \qquad
            Z^{R}_{n-k}
            \coloneqq
            \left(
                \sup_{\vu\in\coneright_{n-k}: \euclidnorms{\vu} \le 1}
                \vg_{T_k}^T\vu
            \right)^2
            .
    \end{equation}
    By 
    \eqref{eq:equivalence-delta},
    we have almost surely
    \begin{equation}
            Z^{L}_k
            =
            \euclidnorms{\Pi_{\coneleft_k}(\vg_{S_k})}
            ,
            \qquad
            Z^{R}_{n-k}
            =
            \euclidnorms{\Pi_{\coneright_{n-k}}(\vg_{T_k})}
            .
    \end{equation}
    Similarly, let $Z \coloneqq \euclidnorms{\Pi_{\K}(\vg)} =
    (\sup_{\vtheta\in\K: \euclidnorms{\vtheta}\le 1}
        \vtheta^T \vg
    )^2$.
    Using 
    \eqref{eq:vtheta-Sk-Tk}
    and by taking the supremum over $\euclidnorms{\vtheta}\le 1$
    in
        \eqref{eq:take-supr-theta}
    , we have established
    \begin{equation}
        \sqrt Z 
        =
        \sup_{\vtheta\in\K: \euclidnorms{\vtheta} \le 1}
        \vtheta^T \vg
        \le
        \max_{k=1,...,n-1}
        \sqrt{
            Z^{L}_k
            +
            Z^{R}_{n-k}
        }.
    \end{equation}
    For any fixed $k$,
    $\vg_{S_k}$ and $\vg_{T_k}$ are independent,
    thus
    $Z^{L}_k$
    and $Z^{R}_{n-k}$ are independent.
    By independence, for all $\lambda>0$, 
    \begin{equation}
        \E e^{\lambda Z}
        \le
        \sum_{k=1}^{n-1}
        \E \exp(
        \lambda Z^{L}_k 
        + \lambda  Z^{R}_{n-k}
        )
        =
        \sum_{k=1}^{n-1}
        \E[ \exp(
        \lambda Z^{L}_k 
        )]
        \;
        \E[ \exp(
        \lambda  Z^{R}_{n-k}
        )].
        \label{eq:sumk-independence}
    \end{equation}
    Applying the moment generating function bound given in \cite[Sublemma A.3]{amelunxen2014living}, we obtain that for any $\lambda \in (0,1/4)$,
    \begin{equation}
        \E[ \exp(
        \lambda Z^{L}_k 
        )]
        \;
        \E[ \exp(
        \lambda  Z^{R}_{n-k}
        )]
        \le
        \exp\left[
            \left(\frac{2\lambda^2}{1-4\lambda} + \lambda\right)
            \left(
                \delta\left(\coneleft_k\right) + 
                \delta\left(\coneright_{n-k}\right)
            \right)
        \right].
    \end{equation}
    Let $\lambda^* = 1/6$. The previous
    display with $\lambda=\lambda^*$ and the definition of $d^*$ yield
    \begin{equation}
        \E[ \exp(
        \lambda^* Z^{L}_k 
        )]
        \;
        \E[ \exp(
        \lambda^*  Z^{R}_{n-k}
        )]
        \le
        \exp\left[
                2\lambda^* d^*
        \right],
    \end{equation}
    for all $k=1,...,n-1$.
    Plugging these bounds back into
    \eqref{eq:sumk-independence}, we obtain 
    \begin{equation}
        \E e^{\lambda^* Z}
        \le
        (n-1) 
        e^{
            2\lambda^* d^*
        }=
        e^{\lambda^*[2d^* + 6 \log(n-1)]}.
    \end{equation}
    The function $t\rightarrow \exp(\lambda^* t)$ is convex,
    so applying Jensen's inequality yields
    $\delta(\K) = \E[Z] \le 2d^* + 6\log(n-1)$.
\end{proof}

\begin{proof}[Proof of \Cref{thm:delta-convex}]
    Define the convex cones
    \begin{equation}
        \coneleft_k = \mathcal{S}^\downarrow_k,
      \qquad
      \coneright_{n-k} = \mathcal{S}^\uparrow_{n-k},
      \qquad
      k=1,...,n-1,
      \label{eq:cones-convex}
    \end{equation}
    i.e., the cone of non-increasing sequences in $\R^k$
    and the cone of nondecreasing sequences in $\R^{n-k}$.
    A convex sequence $\vtheta\in\K^C_{x_1,...,x_n}$
    must be first non-increasing and then nondecreasing,
    so 
    \eqref{eq:vtheta-Sk-Tk}
    holds for some $k=1,...,n-1$.

    We apply \Cref{lemma:delta}
    with $\K=\K^C_{x_1,...,x_n}$ and the cones defined in \eqref{eq:cones-convex}. 
    By \eqref{eq:delta-sum1overk}, $d^*\le 2\log(en)$,
    so \Cref{lemma:delta} yields the bound
    $\delta(\K)\le 10 \log(en)$.
\end{proof}

\begin{proof}[Proof of \Cref{thm:delta-beta}]
    We proceed by induction.
    Let $\beta\ge 1$.
    Assume that
    \eqref{eq:bound-delta-beta}
    holds for this $\beta$.
    We now prove that
    \eqref{eq:bound-delta-beta}
    holds for $\beta+1$.
    Define the convex cones
    \begin{align}
      \coneleft_k = - \conebetak{k}
      \quad
      &\text{ if } k\ge \beta +1,
      \qquad
      \coneleft_{n-k} = \R^k \text{ otherwise,}
      \\
      \coneright_{n-k} = \conebetak{n-k},
      \quad
      &\text{ if } n-k\ge \beta +1,
      \qquad
      \coneright_k = \R^{n-k} \text{ otherwise,}
    \end{align}
    for all $k=1,...,n-1$,
    where $\conebetak{k}\subset\R^k$
    and $\conebetak{n-k}\subset\R^{n-k}$
    are defined in \Cref{example:beta}.
    Let $D_{n-\beta}, D_{n-\beta+1},...,D_n$ be the rectangular matrices defined in
    \eqref{eq:def-Dmatrix}.
    Let $\vtheta\in\conebetaat{\beta+1}$
    and
    define $\vv=D_{n-\beta+1}...D_n\vtheta$.
    As $\vtheta\in\conebetaat{\beta+1}$, $D_{n-\beta}\vv \ge0$.
    Thus $\vv$ is a nondecreasing sequence in $\R^{n-\beta}$.

Let $E=\{l=1,...,n-\beta: v_{l} \le 0\}$ where $v_l,l=1,...,n-\beta$ are the components of $\vv$.
    If $E$ is not empty, let $k=\beta-1+\max E$.
    Then $D_{k-\beta+1}...D_{k-1}D_k\vtheta_{S_k}\le 0$ so that $\vtheta_{S_k}\in - \conebetak{k}$,
    and $D_{n-k-\beta+1}...D_{n-k-1}D_{n-k}\vtheta_{T_k}\ge 0$ so that
    $\vtheta_{T_k}\in\coneright_{n-k}$. 
    If $E$ is empty, $\vv > \vzero$ and thus $\vtheta\in\conebeta$,
    so \eqref{eq:vtheta-Sk-Tk} holds for $k=1$.
    In summary, we have proved that for all $\vtheta\in\conebetaat{\beta+1}$,
    there exists $k=1,...,n-1$ such that \eqref{eq:vtheta-Sk-Tk} holds.

    Combining 
    \Cref{lemma:delta}
    with
    \eqref{eq:bound-delta-beta}
    yields
    \begin{align}
        \delta(\conebetaat{\beta+1})
        &\le 4 C(\beta) \log(en) + 6 \log(n-1), \\
        &\le (4 C(\beta) + 6)\log(en)
        =
        C(\beta+1)\log(en),
    \end{align}
    as by definition of $C(\beta+1)$ and $C(\beta)$,
    $4 C(\beta) + 6 = C(\beta+1)$.
\end{proof}

We now generalize \Cref{thm:increasings}
to the cones $\conebeta$ for $\beta\ge 1$.

\begin{thm}
    \label{thm:soi-for-all-beta}
    Let $\beta,n$ be integers such that $1 \le \beta < n$
    and let $\vmu\in\Rn$.
    Then
    \begin{equation}
        \Evmu
        \scalednorms{\hmu^{LS}(\conebeta) - \vmu}
        \le
        \min_{\vu\in\conebeta}
        \left(
            \scalednorms{\vu - \vmu}
            +
            \frac{C(\beta) \sigma^2 s_\beta(\vu)}{n}\log \frac{en}{s_\beta(\vu)}
        \right),
        \label{eq:soi-ls-Sbeta-E}
    \end{equation}
    where $C(\beta)$ depends only on $\beta$.
    Furthermore, for any $t>0$ we have
    \begin{equation}
        \scalednorms{\hmu^{LS}(\conebeta)- \vmu}
        \le
        \min_{\vu\in\conebeta}
        \left(
            \scalednorms{\vu - \vmu}
            +
            \frac{2C(\beta) \sigma^2 s_\beta(\vu)}{n}\log\frac{en}{s_\beta(\vu)}
        \right)
        + \frac{10 \sigma^2 t}{n}
        \label{eq:soi-ls-Sbeta-deviation}
    \end{equation}
    with probability greater than $1-\exp(-t)$.
\end{thm}
\begin{proof}[Proof of \Cref{thm:soi-for-all-beta}]
    Let $\vu\in\conebeta$ and let $k = s_\beta(\vu)$.
    Let $T_1,...,T_k$ be a partition of $\{1,...,n\}$ such that $\vu$ is a polynomial of degree $\beta-1$
    on all $T_j$, $j=1,...,k$.
    We apply \Cref{cor:soi-start} with 
    $\vomega= \vomega^{[\beta]}$,
    where $\vomega^{[\beta]}$ is defined in 
    \eqref{eq:vomega-beta}.
    Then
    inequality \eqref{eq:sharp-intermediate-simple} holds
    with $\K^\vomega_{|T_j|} = \conebetak{|T_j|}$.
    The rest of the proof is the same as the proof of \Cref{thm:increasings}
    with $a=1$,
    except that we use the bound \eqref{eq:bound-delta-beta}
    instead of \eqref{eq:upper-bounds-delta-increasing-log}.
\end{proof}

For $\beta=1$, the result above is exactly \Cref{thm:increasings} with $a=1$.
The following lower bound holds.

\begin{thm}
    \label{thm:lower}
    There exists an absolute constant $c>0$ such that the following holds.
    Let $\beta,s,n$ be positive integers such that $n\ge s$. 
    Then 
    \begin{equation}
    \label{eq:}
    \inf_{\hmu}
    \sup_{\vmu\in\conebeta:\; s_\beta(\vmu)\le s }
    \proba{
        \scalednorms{\hmu-\vmu}
        \ge \frac{c(\beta) s}{n}
    }
    \ge
    c,
    \end{equation}
    where the infimum is taken over all estimators
    and $c(\beta)>0$ is a constant that depends only on $\beta$.
\end{thm}
The proof of \Cref{thm:lower} is given in \Cref{s:lower}.
Under the assumption of \Cref{thm:lower}
for the integers $s,\beta$ and $n$,
Markov inequality yields
\begin{equation}
    \label{eq:lower-bound-minimax-risk}
    \inf_{\hmu}
    \sup_{\vmu\in\conebeta:\; s_\beta(\vmu)\le s }
    \Evmu
        \scalednorms{\hmu-\vmu}
        \ge \frac{c(\beta)c s}{n}.
\end{equation}
Consider the class
\begin{equation}
    \label{eq:def-sbeta-s}
    \conebeta(s)
    \coloneqq
    \{\vmu\in\conebeta: s_\beta(\vmu)\le s\}.
\end{equation}
The left hand side of
\eqref{eq:lower-bound-minimax-risk}
is the minimax
risk over this class.
We have proved that the minimax risk over this class
is of the order $\sigma^2 s/n$, up to a logarithmic factor.
To be more precise,
inequalities
\eqref{eq:soi-ls-Sbeta-E}
and
\eqref{eq:lower-bound-minimax-risk}
yield
\begin{equation}
    \frac{c(\beta)c \sigma^2 s}{n}
    \le
    \inf_{\hmu}
    \sup_{\vmu\in\conebeta(s)}
    \Evmu
    \scalednorms{\hmu-\vmu}
    \le
    \frac{C(\beta)\sigma^2 s \log(en/s)}{n}.
\end{equation}
Define the minimax regret as
\begin{equation}
    \inf_{\hmu}
    \sup_{\vmu\in\Rn}
    \left(
        \Evmu
        \scalednorms{\hmu-\vmu}
        - \min_{\vu\in\conebeta(s)}
        \scalednorms{\vu-\vmu}
    \right).
\end{equation}
Since the oracle inequality \eqref{eq:soi-ls-Sbeta-E} is sharp,
thus it implies the following bound on the maximal expected
regret of $\ls(\conebeta)$
\begin{equation}
    \label{eq:expected-regret}
    \sup_{\vmu\in\Rn}
    \left(
        \Evmu
        \scalednorms{\ls(\conebeta)-\vmu}
        - \min_{\vu\in\conebeta(s)}
        \scalednorms{\vu-\vmu}
    \right)
    \le
    \frac{C(\beta)\sigma^2 s \log(en/s)}{n}.
\end{equation}
Since the minimax risk is always smaller than the minimax regret,
the minimax regret also satisfies
\begin{equation}
    \label{eq:minimax-regret}
    \frac{c(\beta)c \sigma^2 s}{n}
    \le
    \inf_{\hmu}
    \sup_{\vmu\in\Rn}
    \left(
        \Evmu
        \scalednorms{\hmu-\vmu}
        - \min_{\vu\in\conebeta(s)}
        \scalednorms{\vu-\vmu}
    \right)
    \le
    \frac{C(\beta)\sigma^2 s \log(en/s)}{n}.
\end{equation}
For $\beta=1,2$,
this bracketing of the minimax regret was shown in \cite{bellec2015sharp}.
The estimator proposed in \cite{bellec2015sharp}
for which the upper bound of \eqref{eq:minimax-regret}
is attained
is an aggregate of an exponential number of estimators,
and cannot be computed in polynomial time.
In \eqref{eq:minimax-regret},
the upper bound on the minimax regret is attained at
the Least Squares estimator $\ls(\conebeta)$,
which can be computed efficiently by solving
a convex quadratic minimization program of size $n$.

\subsection{Cones of $m$-monotone sequences}

This section deals with
the cones defined in \Cref{example:monotone}.
Unlike the cones
$\increasings,\convexs$
and $\conebeta$ studied in the previous sections,
the lineality space of the cone $\mathcal M^m_n$
is $\{\vzero \}$ for all $n,m\ge1$.

Let $(T_1,...,T_k)$ be a partition of $\{1,...,n\}$.
To derive \Cref{thm:increasings},
we applied \Cref{cor:soi-start}
to the cones $\mathcal S^\uparrow_{|T_1|},...,\mathcal S^\uparrow_{|T_k|}$
and each of these cones has a lineality space of dimension 1.
Similarly, to derive
\Cref{thm:soi-for-all-beta}
we applied \Cref{cor:soi-start}
to the cones $\conebetak{|T_1|},...,\conebetak{|T_k|}$
and each of these cones has a lineality space of dimension $\beta$.
Although the lineality space of the cone $\mathcal M^m_n$ is $\{\vzero\}$
for all $m,n\ge 1$, \Cref{prop:soi-start} can be
used to derive the following oracle inequalities.

\begin{thm}
    \label{thm:m-monotone}
    Let $m,n$ be integers such that $1 \le m < n$
    and let $\vmu\in\Rn$.
    Then
    \begin{equation}
        \Evmu
        \scalednorms{\hmu^{LS}(\mathcal M^m_n) - \vmu}
        \le
        \min_{\vu\in\mathcal M^m_n}
        \left(
            \scalednorms{\vu - \vmu}
            +
            \frac{C(m) \sigma^2 s_m(\vu)}{n}\log \frac{en}{s_m(\vu)}
        \right),
    \end{equation}
    where $C(\cdot)$ is the constant from \Cref{thm:delta-beta}.
    Furthermore, for any $t>0$,
    \begin{equation}
        \scalednorms{\hmu^{LS}(\mathcal M^m_n)- \vmu}
        \le
        \min_{\vu\in\mathcal M^m_n}
        \left(
            \scalednorms{\vu - \vmu}
            +
            \frac{2C(m) \sigma^2 s_m(\vu)}{n}\log\frac{en}{s_m(\vu)}
        \right)
        + \frac{10 \sigma^2 t}{n}
    \end{equation}
    with probability greater than $1-\exp(-t)$.
\end{thm}
For $\vu\in\mathcal M^m_n$,
recall that 
$s_m(\vu)$ is the smallest number
$s$ such that $\vu$ is a piecewise polynomial function of degree at most $m-1$
with $s$ pieces (cf. \eqref{eq:polynomial}).

\begin{proof}[Proof of \Cref{thm:m-monotone}]
    Let $\vu\in\mathcal M^m_n$ and let $k = s_m(\vu)$.
    Let $T_1,...,T_k$ be a partition of $\{1,...,n\}$ such that $\vu$ is a polynomial of degree $m-1$
    on all $T_j$, $j=1,...,k$.
    Let $P_1,...,P_k$ be the coordinate projections
    \eqref{eq:def-coordinate-projections}.
    We apply \Cref{prop:soi-start} to the cones
    $\K_j \coloneqq \mathcal S^{[m]}_{|T_j|}$,
    $j=1,...,k$.
    For all $j=1,...,k$, $P_j\K\subset \K_j$ (cf. \eqref{eq:monotone-cap})
    and the restriction $\vu_{T_j}$
    is a polynomial of degree at most $m$.
    Thus,
    $\vu_{T_j} \in \lineality{\K_j}$.
    Then
    inequality \eqref{eq:sharp-intermediate} holds
    and can be rewritten
    in the form
    \begin{equation}
        \scalednorms{\hmu^{LS}(\mathcal M^m_n)- \vmu}
        \le
        \scalednorms{\vu - \vmu}
        +
        \sum_{j=1}^k
        \scalednorms{\Pi_{\mathcal S^{[m]}_{|T_j|}}(\vxi_{T_j})}.
    \end{equation}
    The rest of the proof is the same as the proof of \Cref{thm:increasings}
    with $a=1$,
    except that 
    instead of \eqref{eq:upper-bounds-delta-increasing-log}
    we use the bound \eqref{eq:bound-delta-beta}
    with $\beta$ replaced by $m$.
\end{proof}

\subsection{Non-Gaussian noise}

In this section, we do not assume that the noise vector $\vxi$
is normally distributed.
\Cref{prop:soi-start} does not depend on the distribution of the noise vector $\vxi$.
To illustrate this, we apply \Cref{cor:soi-start}
to the cone $\K=\increasings$.
For any $\vu\in\Rn$, let $(T_1,...,T_k)$ be a partition such that $\vu$ is constant
on all $T_j,j=1,...,k$.
Taking expectations of both sides of
\eqref{eq:sharp-intermediate-simple} yields
\begin{equation}
    \E \scalednorms{\ls(\increasings) -\vmu}
    \le
    \scalednorms{\vu -\vmu}
    + \sum_{j=1}^k
    \E \scalednorms{\Pi_{\mathcal S^\uparrow_{|T_j|}}(\vxi_{T_j})}.
    \label{eq:start-non-gaussian}
\end{equation}
Let  $\vxi=(\epsilon_1,...,\epsilon_N)^T$
where $\epsilon_1,...,\epsilon_N$ are i.i.d.
random variables
with $\E\epsilon_1 = 0$
and
$\E[\epsilon_1^2] \le \sigma^2$.
It was shown in \cite[Theorem 3.1]{chatterjee2013risk}
that for all $N\ge 1$,
\begin{equation}
    \E \scalednorms{\Pi_{\mathcal S^\uparrow_N}(\vxi)} \le 4\sigma^2\log(eN).
    \label{eq:bound-non-gaussian}
\end{equation}
Combining this bound with \eqref{eq:start-non-gaussian}
and Jensen's inequality yields the following result.
\begin{cor}
    Let $\vmu\in\Rn$.
    Let $\vxi=(\epsilon_1,...,\epsilon_n)^T$
    where $\epsilon_1,...,\epsilon_N$ are i.i.d. with $\E\epsilon_1 = 0$
    and
    $\E[\epsilon_1^2] = \sigma^2$. Then for all $\vu\in\increasings$,
    \begin{equation}
        \E \scalednorms{\ls(\increasings) -\vmu}
        \le
        \scalednorms{\vu -\vmu}
        + 
        \frac{4\sigma^2 k(\vu) \log(en/k(\vu))}{n}.
    \end{equation}
\end{cor}

\subsection{Multivariate isotonic regression}

\Cref{prop:soi-start} is not limited to univariate regression.
Let $d>1$, let $n_1,...,n_d > 1$ be integers and let $n=n_1n_2...n_d$.
Consider the discrete hyperrectangle
\begin{equation}
    \label{eq:def-I}
    I = \{ (i_1,...,i_d)\subset \mathbf{N}^d, \quad 1\le i_l\le n_l, \text{ for all } l=1,...,d\},
\end{equation}
and the cone $\K^{d \uparrow}\subset \mathbf{R}^I$ defined by
\begin{multline}
    \K^{d \uparrow}
    = \Big\{
            \vu=(
            u_{i_1i_2...i_d}
        )_{(i_1,i_2,...,i_d)\in I}
                \in \mathbf{R}^I
            ,
        \\
        \text{such that }
            u_{i_1i_2...i_d}
            \le
            u_{j_1j_2...j_d}
            \text{ if }
            (i_l\le j_l \text{ for all }l=1,...,d)
    \Big\}.
    \label{eq:def-tensor}
\end{multline}
The set
$\K^{d \uparrow}$
is the cone of vectors indexed by $I$
that are nondecreasing in all directions $l=1,...,d$.
For $d=2$, the performance of the Least Squares estimator over
$\K^{2 \uparrow}$ has been recently studied in \cite{chatterjee2015matrix}.
Let $k$ be a positive integer.
Consider a partition $(T_1,...,T_k)$ of $I$ 
and $\vu\in \K^{d\uparrow}$ such that $\vu$ is constant on $T_j$ for all $j=1,...,k$.
Then, for any unknown $\vmu\in\mathbf{R}^I$,
\Cref{prop:soi-start} yields
\begin{equation}
    \Evmu \scalednorms{\ls(\K^{d\uparrow}) - \vmu}
    \le
    \scalednorms{\vu - \vmu}
    + \frac{\sigma^2}{n} \sum_{j=1}^k \delta_j,
    \label{eq:higher-d}
\end{equation}
where for all $j=1,...,r$,
$\delta_j$ is the statistical dimension of the cone
\begin{multline}
    \Big\{
            \vu=(
            u_{i_1i_2...i_d}
        )_{(i_1,i_2,...,i_d)\in T_j}
        \in \mathbf{R}^{T_j}
        \\
        \text{such that }
            u_{i_1i_2...i_d}
            \le
            u_{j_1j_2...j_d}
            \text{ if }
            (i_l\le j_l \text{ for all }l=1,...,d)
    \Big\}.
\end{multline}
If $d=2$ and $T_1,...,T_k$ are rectangles,
\citet{chatterjee2015matrix} proved that
$\delta_j \le C \log(en)^8$ for some absolute constant $C$.
In that case, a direct consequence of \Cref{prop:soi-start} is
\begin{equation}
    \Evmu \scalednorms{\ls(\K^{2\uparrow}) - \vmu}
    \le
    \scalednorms{\vu - \vmu}
    + \frac{C \sigma^2 k \log(en)^8 }{n}.
    \label{eq:sharp-tensor-2}
\end{equation}
This improves upon
the oracle inequality \cite[Theorem 4.1]{chatterjee2015matrix}
that has a leading constant strictly greater than 1.
More importantly, \eqref{eq:higher-d} above
shows that our method does not depend on the underlying dimension
since \eqref{eq:higher-d} holds for any $d\ge2$.

\section{OLD}

\subsection{Orthogonal decomposition and lineality spaces}
\Cref{prop:soi-start} below 
is our main tool to derive sharp oracle inequalities 
for the estimator $\ls(\K)$ for any closed convex cone $\K$.
Given a matrix $P$,
denote by $\Ima P$ the linear span of the columns of $P$.

\begin{prop}
    \label{prop:soi-start}
    Let $n\ge 2$, $\vmu\in\Rn$,
    let $\K$ be a closed convex set 
    and
    let $\vu\in\K$. Furthermore, assume \ref{list:i} and \ref{list:ii} below.
    \begin{enumerate}[label=(\roman*)]
        \item \label{list:i}
            There exist orthogonal projectors $P_1,...,P_k$ 
            such that
            \begin{align}
                \sum_{j=1}^k P_j = I_{n\times n}
                \qquad
                \text{ and }
                \qquad
                P_j P_l = 0 \quad\text{for all }j,l=1,...,k.
            \end{align}
        \item \label{list:ii}
        There exist closed convex cones $\K_1,...,\K_k$ such that
        \begin{equation}
            \{P_j\vv,\vv\in\K\} \subseteq \K_j \subseteq \Ima P_j
            \quad
            \text{and}
            \quad
            P_j \vu \in \lineality{\K_j},
            \qquad j=1,...,k.
        \end{equation}
    \end{enumerate}
    Then almost surely
    \begin{equation}
        \scalednorms{\hmu^{LS}(\K) - \vmu}
        \le
        \scalednorms{\vu - \vmu}
        +
        \sum_{j=1}^k
        \scalednorms{\Pi_j(P_j\vxi)}
        ,
        \label{eq:sharp-intermediate}
    \end{equation}
    where
    $\Pi_j: \Ima P_j\rightarrow \K_j$
    is
    the projection onto 
    $\K_j$, $j=1,...,k$.
\end{prop}

The assumptions of \Cref{prop:soi-start}
on the projectors $P_1,...,P_k$ imply
\begin{equation}
    \Ima P_1
    \oplus
    \dots
    \oplus
    \Ima P_k
    =
    \Rn,
\end{equation}
where 
$\oplus$ denotes an orthogonal direct sum.
The random variables $P_1\vxi,...,P_k\vxi$
are thus independent normal random variables.

\Cref{prop:soi-start} allows us
to bound from above the loss of $\ls(\K)$
by bounding from above the sum
of the 
$k$ independent
random variables $\scalednorms{\Pi_1(P_1\vxi)},...,\scalednorms{\Pi_k(P_k\vxi)}$.
By the definition
 of the statistical dimension of a cone given
in \eqref{eq:def-delta},
\Cref{prop:soi-start} shows that 
upper bounds on the statistical dimensions of the cones
$\K_{1},...,\K_{k}$
imply a sharp oracle inequality for the estimator $\ls(\K)$.

If $\K$ is a closed convex cone,
a natural choice for $\K_1,...,\K_k$
is $\K_j=\{P_j\vv,\vv\in\K\}$,
$j=1,...,k$.
However, we will see
in \Cref{thm:m-monotone}
an application of \Cref{prop:soi-start}
with a different choice for the cones $\K_1,...,\K_k$,
and in 
\eqref{eq:oi-subset}
an application of \Cref{prop:soi-start}
where $\K$ is not a cone.

To prove \Cref{prop:soi-start},
we use the strong convexity of the Least Squares minimization problem,
as follows.

\begin{proof}[Proof of \Cref{prop:soi-start}]
    Let $\hmu = \hmu^{LS}(\K)$ for notational simplicity.
    Inequality \eqref{eq:kkt-noise}
    can be rewritten as
    \begin{equation}
        \euclidnorms{\hmu - \vmu}
        - \euclidnorms{\vu - \vmu}
        \le 2 \vxi^T(\hmu - \vu) - \euclidnorms{\vu - \hmu}
        = \euclidnorms{\vxi} - \euclidnorms{\vxi - \hmu + \vu}.
        \label{eq:kkt-ls}
    \end{equation}
    For all $j=1,...,k$, we have $P_j\hmu\in\K_j$
    and $P_j\vu\in\lineality{\K_j}$,
    so $P_j(\hmu-\vu)\in\K_j$.
    Thus, by definition of $\Pi_1,...,\Pi_k$,
    \begin{align}
        \euclidnorms{\vxi} - \euclidnorms{\vxi - \hmu + \vu}
        & =
        \sum_{j=1}^k \euclidnorms{P_j\vxi} - \euclidnorms{P_j(\vxi - (\hmu-\vu))}, \\
        & \le
        \sum_{j=1}^k \euclidnorms{P_j\vxi} - \euclidnorms{P_j\vxi - \Pi_j(P_j\vxi)}, \\
        & =
    \sum_{j=1}^k 2 (P_j\vxi)^T \Pi_j(P_j\vxi) - \euclidnorms{\Pi_j(P_j\vxi)}
        =
        \sum_{j=1}^k \euclidnorms{\Pi_j(P_j\vxi)},
    \end{align}
    where for the last equality we used that if $\Pi$ is a projection onto a closed convex cone,
    $\Pi(\vtheta)^T(\vtheta - \Pi(\vtheta)) = 0$ for all vectors $\vtheta$ (cf. \eqref{eq:Pi_characterisation}).
    By plugging the previous display back into \eqref{eq:kkt-ls}
    and dividing by $n$,
    we obtain \eqref{eq:sharp-intermediate}.
\end{proof}

For any $T\subset\{1,...,n\}$ and $\vv\in\Rn$, denote by
$\vv_T\in\R^{|T|}$ the restriction of $\vv$ to the set $T$
and by $|T|$ the cardinality of $T$.
Let $(T_1,...,T_k)$ be a partition of $\{1,...,n\}$
and let $P_1,...,P_k$
be the coordinate projections
\begin{equation}
P_j = \sum_{l\in T_j} \ve_l \ve_l^T,
\qquad
\text{for all }
\quad
j=1,...,k.
\label{eq:def-coordinate-projections}
\end{equation}
If $\K=\K^\vomega_n$ for some vector $\vomega\in\Rm$ (cf. \Cref{example:weights}),
and the cones $\K_1,...,\K_k$ are given by
$\K_j = P_j \K$ for all $j=1,...,k$, then
$\K_j = \K^\vomega_{|T_j|}$ and \Cref{prop:soi-start}
takes the following form.

\begin{cor}
    \label{cor:soi-start}
    Let $\vomega\in\Rm$ for some $m\le n$.
    Let $(T_1,...,T_k)$ be a partition of $\{1,...,n\}$
    such that
    for all  $j=1,...,k$,
    $T_j$ has the form $T_j = \{t_j+1,...,t_j+|T_j|\}$ for some integer $t_j\ge0$.
    Let $\vu=(u_1,...,u_n)^T\in\K^\vomega_n$ be such that
    \begin{equation}
        \vu_{T_j} \in \lineality{\K^\vomega_{|T_j|}},
        \qquad
        j=1,...,k.
    \end{equation}
    Then, almost surely
    \begin{equation}
        \scalednorms{\ls(\K^\vomega_n) -\vmu}
        \le
        \scalednorms{\vu -\vmu}
        + \sum_{j=1}^k
        \scalednorms{\Pi_{\K^\vomega_{|T_j|}}(\vxi_{T_j})}.
        \label{eq:sharp-intermediate-simple}
    \end{equation}
\end{cor}

To illustrate \Cref{prop:soi-start} and \Cref{cor:soi-start},
we now prove \Cref{thm:increasings}.
\begin{proof}[Proof of \Cref{thm:increasings}]
    Let $\hmu = \hmu^{LS}(\increasings)$ for notational simplicity.
    Let $\vu\in\increasings$ and let $k = k(\vu)$.
    Let $T_1,...,T_k$ be a partition of $\{1,...,n\}$ such that $\vu$ is constant on all $T_j$, $j=1,...,k$.
    Thanks to \Cref{cor:soi-start} with $\vomega=(-1,1)^T$,
    inequality \eqref{eq:sharp-intermediate-simple} holds
    where $\K^\vomega_{|T_j|} = \mathcal S^\uparrow_{|T_j|}$.
    We will first prove \eqref{eq:soi-ls}.
    It was shown in  \cite[Appendix D.4]{amelunxen2014living}
    that if $\vg\sim\mathcal N (\vzero,I_{n\times n})$, 
    then
    \begin{equation}
        \E \left[ \euclidnorms{\Pi_{\mathcal S^\uparrow_{|T|}}(\vg_{T})} \right] = \sum_{l=1}^{|T|} \frac{1}{l} \le \log(e |T|),
        \label{eq:upper-bounds-delta-increasing-log}
    \end{equation}
    where $T=\{1,...,n\}$.
    To complete the proof of \eqref{eq:soi-ls},
    we take the expectations in \eqref{eq:sharp-intermediate-simple}
    and apply Jensen's inequality to get
    \begin{equation}
        \sum_{j=1}^k
        \log(e|T_j|)
        =
        k
        \sum_{j=1}^k
        \frac{1}{k}
        \log(e|T_j|)
        \le 
        k
        \log\left(
            \frac{e}{k} \sum_{j=1}^k |T_j|
        \right)
        =
        k \log\frac{en}{k}.
        \label{eq:concavity-increasings}
    \end{equation}
    To prove \eqref{eq:soi-ls-deviation},
    we use \eqref{eq:sharp-intermediate-simple} where 
    $\vu\in\increasings$ is a minimizer of the right hand side of
    \eqref{eq:soi-ls-deviation},
    and then apply \Cref{lemma:concentration-proj} to the stochastic term.
\end{proof}

As a consequence of \Cref{prop:soi-start},
we can derive sharp oracle inequalities for the Least Squares estimator
if we can bound from above the statistical dimension
of certain cones.

We now illustrate how \Cref{prop:soi-start}
can be used in situations where $\K$ is not a cone.
Let $\K$ be any closed convex subset of $\increasings$.
Let $\vu\in\K$ and let $(T_1,...,T_k)$ be a partition of $\{1,...,n\}$
such that $\vu$ is constant on all $T_j,j=1,...,k$.
Let $P_1,...,P_k$ be the coordinate projections \eqref{eq:def-coordinate-projections} and let $\K_j = \mathcal S^\uparrow_{|T_j|}$.
Applying \eqref{eq:sharp-intermediate}
and
following the same arguments as in the proof of \Cref{thm:increasings},
we obtain that for any closed convex subset $\K$ of $\increasings$,
\begin{equation}
    \Evmu
    \scalednorms{\hmu^{LS}(\K)- \vmu}
    \le
    \min_{\vu\in\K}
    \left(
        \scalednorms{\vu - \vmu}
        +
        \frac{\sigma^2 k(\vu)}{n}\log\frac{en}{k(\vu)}
    \right).
    \label{eq:oi-subset}
\end{equation}
For instance, \eqref{eq:oi-subset} holds
for
$\K =\{ \vu\in\increasings: a^- \le u_1, u_n \le a^+\}$
where $a^-<a^+$ are fixed real numbers.

\section{Concluding remarks}
We have presented two general methods to derive
sharp oracle inequalities
for the Least Squares estimator
over a closed convex subset of $\Rn$.
First, \Cref{prop:soi-start} shows that the Least Squares estimator
over a closed convex set
satisfies a sharp oracle inequality in deviation and expectation,
where the remainder term is proportional to the sum of the
statistical dimensions of some cones (cf. \eqref{eq:sharp-intermediate}).
The second method is based
on localized Gaussian widths and is given in
\Cref{thm:isomorphic}.
If $\C$ is a closed convex subset of $\Rn$,
\Cref{thm:isomorphic}
shows
that the Least Squares estimator
$\ls(\C)$ satisfies a sharp oracle inequality in deviation and expectation
if the 
localized Gaussian width of $\C$ satisfies condition
\eqref{eq:fixed-point-C}
for some constant $t_*>0$.
To summarize, our methods lead to the following improvements.
\begin{enumerate}[label=(\roman*)]
    \item 
        Our oracle inequalities hold 
        not only for the expected squared risk,
        but
        also in deviation with exponential probability tails.
        By integration, a sharp oracle inequality in deviation with exponential probability tails
        always implies a sharp oracle inequality in expectation.
        The reverse is not true, as there exist estimators
        that satisfy sharp oracle inequalities in expectation but not in deviation \cite{audibert2007no,dai2012deviation}.
    \item
        Another improvement of the present paper over \cite{zhang2002risk,guntuboyina2013global,chatterjee2013risk,chatterjee2015matrix}
        is that our oracle inequalities are sharp, i.e., with leading constant 1.
        Thus, our bounds account for model misspecification.
        This advantage can be interpreted at least in the following two ways.
        \begin{enumerate}
            \item 
                Let $\C$ be a closed convex set such that $\hmu\notin\C$ 
                and $\hmu$  an estimator valued in $\C$.
                The quantity 
                $\scalednorms{\hmu - \Pi_\C(\vmu)}$
                is a natural measure of the performance of $\hmu$.
                As seen in \eqref{eq:soi-imply},
                sharp oracle inequalities grant upper bounds
                on the quantity $\scalednorms{\hmu - \Pi_\C(\vmu)}$,
                whereas oracle inequalities with leading constant 
                strictly greater than 1 do not.
            \item 
                A second advantage of sharp oracle inequalities
                is that they allow to bound from above the minimax regret.
                To see this,
                let $E,\bar E$ be two subsets of $\Rn$ with $E \subset \bar E$.
                If  $\vmu\in E$,
                we say that the model is well-specified, if $\vmu\in\bar E\setminus E$ the model is misspecified.
                The minimax risk over $E$ is
                $\inf_\tildemu
                \sup_{\vmu\in E}
                \Evmu \scalednorms{\tildemu - \vmu}$
                and the minimax regret with respect to $(E,\bar E)$ is
                \begin{equation}
                    \inf_\tildemu
                    \sup_{\vmu\in \bar E}
                    \left(
                        \Evmu \scalednorms{\tildemu - \vmu}
                        - \inf_{\vu\in E}\scalednorms{\vu- \vmu}
                    \right),
                \end{equation}
                where the infima are taken over all estimators.
                The minimax risk is a measure of the statistical complexity of $E$
                if the model is well-specified.
                The minimax regret is a natural measure of the statistical complexity of $E$
                that accounts for misspecification with respect to the set $\bar E$.
                There
                are situations where the minimax regret is substantially greater
                than the minimax risk
                \cite{rakhlin2013empirical}.
                Thus, it is important to study both the minimax risk and the minimax regret.
                As follows from 
                \eqref{eq:expected-regret}
                and
                \eqref{eq:expected-regret2},
                the
                sharp oracle inequalities \eqref{eq:soi-ls-Sbeta-E}
                and \eqref{eq:soi-increasing-uniform}
                yield upper bounds on the minimax regret for $(E,\bar E)=(\conebeta(s),\Rn)$ and $(E,\bar E)=(\increasings,\{\vmu\in\Rn:\inftynorm \vmu \le D\})$.
                On the other hand,
                risk bounds or oracle inequalities with leading constant strictly greater than 1
                do not imply bounds on the minimax regret.
        \end{enumerate}

\end{enumerate}

\section{Application of isomorphic - WIP}

\begin{cor}
    \label{cor:rate12}
    There exist absolute constants $C>0$ such that the following holds.
    Let $d=2$ and $n = n_1n_2$ for two positive integers $n_1,n_2$.
    Let $\vmu\in\mathbf{R}^I$ where $I$ is defined in \eqref{eq:def-I},
    and let $\vmu^*$ be the projection of $\vmu$ onto
    the cone $\K^{2\uparrow}$ defined in \eqref{eq:def-tensor}.
    Then for all $x>0$,
    with probability greater than $1-\exp(-x)$,
    \begin{equation}
        \scalednorms{
            \ls(\K^{2\uparrow})
            -
            \vmu
        }
        \le
        \min_{\vu\in\K^{2\uparrow}}
        \scalednorms{
            \vu
            -
            \vmu
        }
    +   
            \frac{C\sigma^2 \log(en)^8}{n}
            +
            \frac{C\sqrt{\sigma^2 V(\vmu^*)} }{n^{1/2}}
        + \frac{16\sigma^2 x}{n},
    \end{equation}
    where $V(\vmu^*) = (1/n) 
    \sum_{i_1=1}^{n_1}
\sum_{i_2=1}^{n_2}
(\mu^*_{i_1i_2} - \bar\mu^*)^2$
and
$
\bar\mu^* =
(1/n)
    \sum_{i_1=1}^{n_1}
\sum_{i_2=1}^{n_2}
\mu^*_{i_1i_2}
$.
\end{cor}
\begin{proof}[Proof of \Cref{cor:rate12}]
    It was proved in \cite[Section 2]{chatterjee2015matrix}
    that there exists an absolute constant $c>0$ 
    such that $t_*\coloneqq c \sqrt{\sigma^2\log(en)^8/n + \sqrt{\sigma^2 V(\vmu^*)/n}}$ satisfies
    \eqref{eq:fixed-point-C}.
    Applying \Cref{thm:isomorphic} completes the proof.
\end{proof}

For any $\vmu=(\mu_1,...,\mu_n)^T\in\Rn$, let  $\inftynorm{\vmu} = \max_{i=1,...,n}|\mu_i|$.
It is easy to see that $\inftynorm{\Pi_\increasings(\vmu)}\le \inftynorm{\vmu}$.
By integration, 
\eqref{eq:soi-increasing-uniform} implies the following bound on the
maximal expected regret of $\ls(\increasings)$
over the class $\{\vmu\in\Rn:\inftynorm{\vmu}\le D\}$
\begin{equation}
    \label{eq:expected-regret2}
    \sup_{\vmu\in\Rn:\inftynorm{\vmu}\le D}
    \left(
        \Evmu
        \scalednorms{
            \ls(\increasings)
            -
            \vmu
        }
        -
        \min_{\vu\in\increasings}
        \scalednorms{
            \vu
            -
            \vmu
        } 
    \right)
    \le
    C' \left(\frac{ D \sigma^2}{n}\right)^{2/3},
\end{equation}
where $D\ge \sigma$ is a fixed parameter
and $C'>0$ is an absolute constant.
Similar regret bounds hold for the estimator
$\ls(\convexs)$ with the rate $n^{-4/5}$,
and 
for $\ls(\K^{2\uparrow})$ with the rate $n^{-1/2}$.

\section{Proofs: Lower bound}
\label{s:lower}

Define the support of $\vv=(v_1,...,v_n)^T\in\Rn$
by $\supp(\vv) = \{i=1,...,n: v_i\ne 0\}$.

\begin{proof}[Proof of \Cref{thm:lower}]
    First, assume that 
    $s \ge 9 \beta + 1 $.
    Let $S\ge 8$ be the largest integer
    such that $(S+1)\beta + 1\le s$.
    For all
    $\vomega = (\omega_1,...,\omega_S)^T\in \{0,1\}^S$
    and $\vomega' = (\omega_1',...,\omega_S')^T\in \{0,1\}^S$,
    define the Hamming distance between $\vomega$ and $\vomega'$ by 
    $d_H(\vomega,\vomega') = \sum_{k=1}^S |\omega_k - \omega_k'|$.
    By the Varshamov-Gilbert bound \cite[Lemma 2.9]{tsybakov2009introduction},
    there exists $\Omega\subseteq \{0,1\}^S$ such that
    \begin{equation}
        \vzero = (0,...,0)^T\in\Omega,
        \quad
        \log (|\Omega| - 1)\ge S/8,
        \quad
        \text{and}
        \quad
        d_H(\vomega,\vomega') > S/8
    \end{equation}
    for all $\vomega,\vomega'\in\Omega$ such that $\vomega\ne \vomega'$.
    Let $m$ be the largest integer such that $mS \le n$.
    For each $\vomega\in\Omega$, define
    $\vu^\vomega=(u^\vomega_1,...,u^\vomega_n)$
    by
    \begin{equation}
        \qquad
        u_i^\vomega = \omega_j
        \qquad
        \text{ if } jm \le i -1 < jm+1,
        \qquad
        \text{ for } i=1,...,Sm\text{ and } j=1,...,S,
    \end{equation}
    and $u_i^\vomega=0$ if $i>Sm$.
    Let $\Delta=\design^{-1}$ where $\design$ is the matrix \eqref{eq:design},
    i.e., $\Delta\vu = (u_1, u_2-u_1, ..., u_n-u_{n-1})^T$ for all $\vu=(u_1,...,u_n)^T\in\Rn$.
    For all $\vomega\in\Omega$,
    $\vu^\vomega$ is piecewise constant with at most $S+1$ pieces.
    It is easy to see that
    $\Delta\vu^\vomega$ has at most $S+1$ nonzero components
    and 
    \begin{equation}
        \supp(\Delta\vu^\vomega)\subset\cup_{k=0}^S\{km + 1\}.
    \end{equation}
    An immediate recurrence yields that for all $\vomega\in\Omega$,
    \begin{equation}
        \supp(\Delta^\beta\vu^\vomega)\subset\cup_{k=0}^S\{km + 1, km+2,...,km+\beta\}.
    \end{equation}
    Let $T\coloneqq (\cup_{k=0}^S \{km + 1, km+2,...,km+\beta\}) \cap\{1,...,n\}$.
    We have $|T|\le (S+1)\beta$ and $\supp(\Delta^\beta\vu^\vomega)\subset T$ for all $\vomega\in\Omega$.
    Define $\vtheta=(\theta_1,...,\theta_n)^T\in\Rn$
    by
    \begin{equation}
        \theta_i = \max\left(0, \max_{\vomega\in\Omega} \left( -(\Delta^\beta \vu^\vomega)_i \right) \right)
        \quad
        \text{ if }
        i\in T,
        \qquad
        \theta_i = 0
        \quad
        \text{ if } i\notin T.
    \end{equation}
    By construction, for all $\vomega\in\Omega$,
    $\supp(\vtheta + \Delta^\beta \vu^\vomega)\subset T$ 
    and $\vtheta + \Delta^\beta \vu^\vomega$ has nonnegative entries.
    Let $\gamma = (1/8)\sqrt{\sigma^2/m}$.
    Using \Cref{lemma:sbeta} below,
    for all $\vomega\in\Omega$,
    \begin{equation}
        \vx^\vomega \coloneqq \gamma \design^\beta\left(
            \vtheta + \Delta^\beta \vu^\vomega
        \right)
        = \gamma \design^\beta \vtheta + \gamma \vu^\vomega
    \end{equation}
    belongs to 
    $
        \conebeta$,
    and $
    s_\beta(\vx^\vomega) \le |T| + 1 \le \beta(S+1) +1 \le s$.
    For two distinct $\vomega,\vomega'\in\Omega$, we have
    \begin{equation}
    \label{eq:}
    \scalednorms{\vx^\vomega - \vx^{\omega'}}
    =
    \frac{\gamma^2 d_H(\vomega,\vomega')m}{n}
    \ge
    \frac{\gamma^2 S m}{8n}
    \ge
    \frac{\gamma^2}{16},
    \end{equation}
    as by definition of $m$,  $n/(2S)< m \le n/S$.
    Denote by $P_\vomega$ the distribution of $\vx^\vomega + \vxi$.
    For any $\vomega\in\Omega$, 
    the Kullback-Leibler divergence between the measures $P_\vomega$ and $P_\vzero$
    satisfies
    \begin{equation}
        K(P_\vomega,P_\vzero)
        =
        \frac{n}{2\sigma^2}\scalednorms{\vx^\vomega - \vx^\vzero}
        =
        \frac{n\gamma^2}{2\sigma^2}\scalednorms{\vu^\vomega - \vu^\vzero}
    \le \frac{\gamma^2 Sm}{2\sigma^2}
    \le \frac{S}{128}
    \le \frac{\log|\Omega| -1}{16}.
    \end{equation}
    By \cite[Theorem 2.7]{tsybakov2009introduction} with $\alpha = 1/16$,
    there exists an absolute constants $c>0$ such that
    \begin{equation}
    \label{eq:}
    \inf_{\hmu}
    \sup_{\vmu\in\conebeta:\; s_\beta(\vmu)\le s }
    \proba{
        \scalednorms{\hmu-\vmu}
        \ge \frac{\gamma^2}{64}
    }
    \ge
    c.
    \end{equation}
    By definition of $S$ we have $s\le 2 S \beta$,
    and by definition of $m$ we have $Sm\le n$.
    This implies that
    \begin{equation}
        64\gamma^2 =
        \frac{ \sigma^2}{m} 
        \ge
        \frac{\sigma^2 S}{n}
        \ge
        \frac{\sigma^2 s}{2\beta n}.
    \end{equation} 

    It remains to consider the case $s < 9\beta +1$.
    In this case, the rate is of order $1/n$ and the lower bound follows from standard arguments
    by a reduction to testing between two simple hypotheses.
\end{proof}

\begin{lemma}
    \label{lemma:sbeta}
    Let $n<\beta$ be positive integers
    and let $\design$ be the matrix \eqref{eq:design}.
    Assume that $\vtheta=(\theta_1,...,\theta_n)^T\in\Rn$ has nonnegative entries,
    i.e., $\theta_i\ge0,i=1,...,n$.
    Then
    $\design^\beta\vtheta\in\conebeta$ and
    $s_\beta(\design^\beta\vtheta)
    \le  |\supp(\vtheta)|+1$.
\end{lemma}
\begin{proof}
    The claim $\design^\beta\vtheta\in\conebeta$
    follows from
    \begin{equation}
        D_{n-\beta+1}D_{n-\beta + 2}...D_n\design^\beta \vtheta = (\theta_{\beta+1},...,\theta_n)^T.
    \end{equation}
    Let $t_1=1$.
    There exists $k\ge 1$ such that
    $\supp(\vtheta)\cup\{t_1\}=\{t_1,...,t_{k}\}$ with $t_1<...<t_{k}$
    and $k\le |\supp(\vtheta)| + 1$.
    Let $t_{k+1}=n+1$
    and define a partition $(T_1,...,T_k)$ of $\{1,...,n\}$
    by $T_j=\{t_j ,...,t_{j+1}-1\},j=1,...,k$.
    Let $\vu_j = (\design^\beta\vtheta)_{T_j} \in \R^{|T_j|}$.
    Let $j=1,...,k$.
    If $|T_j|\le \beta$ then using interpolation polynomials,
    the vector
    $\vu_j$ satisfies $(\vu_j)_i = Q_j(i),i=1,...,|T_j|$
    for some polynomial $Q_j$ of degree at most $\beta - 1$.
    If $|T_j|>\beta$ then
    \begin{equation}
        D_{|T_j|-\beta+1}...D_{|T_j|} \vu_j
        = (\theta_{t_{j}+\beta},...,\theta_{t_{j+1} - 1})^T.
    \end{equation}
    By definition of $t_1,...,t_k$ we have $(\theta_{t_{j}+\beta},...,\theta_{t_{j+1} - 1})^T = \vzero$.
    Thus $\vu_j\in\lineality{\conebetak{|T_j|}}$ and there exists a polynomial $Q_j$ of degree at most $\beta-1$
    such that $(\vu_j)_i = Q_j(i), i=1,...,|T_j|$. 
    We have established that 
    $s_\beta(\design^\beta\vtheta)
    \le k \le  |\supp(\vtheta)|+1$.
\end{proof}

\section{OLD}

If $\K$ is a closed convex cone,
we will show that the estimator $\hmu=\ls(\K)$ satisfies
oracle inequalities of the form
\begin{equation}
    \label{eq:example-oi-adapt}
    \Evmu
    \scalednorms{\hmu-\vmu}
    \le
    C
    \min_{\vu\in \K}
    \left(
        \scalednorms{\vu-\vmu}
        +
        r_{\sigma,n}(\vu)
    \right),
\end{equation}
where $C\ge1$ and
the remainder term
$
r_{\sigma,n}(\vu)
$
depends on $\vu$, unlike
\eqref{eq:example-oi} where the remainder term depends on $\K$ but not on $\vu$.
The right hand side of 
\eqref{eq:example-oi-adapt} is minimized
by a vector $\vu\in \K$ that makes a trade-off
between the approximation error
$\scalednorms{\vu-\vmu}$ and the quantity
$r_{\sigma,n}(\vu)$.
Bounds 
such as \eqref{eq:example-oi-adapt} will be called adaptive oracle inequalities.
Such trade-off is common in the context of sparse linear regression,
where the right hand side of sparsity oracle inequalities
balances approximation error and sparsity 
\cite{bickel2009simultaneous,koltchinskii2011nuclear,rigollet2011exponential}.
This is opposed to the oracle inequality
\eqref{eq:example-oi} where there is no trade-off,
the right hand side of \eqref{eq:example-oi}
is always minimized for $\vu=\Pi_\K(\vmu)$
which achieves the smallest approximation error.

\section{Other representations}
\subsection{increasing}
For all $q\ge 2$, denote by $D_q$ the following matrix with $q-1$ rows and $q$ columns
\begin{equation}
    D_q \coloneqq
\begin{bmatrix}
        -1 & 1 & 0 & \dots & \dots  & 0  \\
        0 & -1 & 1 & \dots & \dots  & 0 \\
        \vdots & \vdots & \ddots & \ddots & \ddots & \vdots \\
    0 & \dots & \dots & 0 & -1  & 1
\end{bmatrix}
.
    \label{eq:def-Dmatrix}
\end{equation}
For all $q\ge 2$,
denote by $\le$ and $\ge$ the component-wise comparison operators in $\R^q$, i.e.,
\begin{equation}
    (u_1,...,u_q)^T
    \le
    (v_1,...,v_q)^T
    \quad
    \text{ if and only if }
    \quad
    \forall i=1,...,q,
    \quad
    u_i \le v_i.
\end{equation}
\begin{equation}
    \increasings
    =
    \{\vu=(u_1,...,u_n)^T\in\Rn:  D_n \vu \ge \vzero = (0,...,0)^T \},
\end{equation}
where $D_n$ is the matrix \eqref{eq:def-Dmatrix}.
Define the matrix 
$\design=(\design_{ij})_{i=1,...,n,\;j=1,...,n}$
by
\begin{equation}
    \label{eq:design}
    \design_{ij}= 1 \quad\text{ if } j\le i \qquad \text{and} \qquad \design_{ij}= 0 \quad \text{ otherwise}.
\end{equation}
Then we can represent the set $\increasings$
as a linear model with design matrix $\design$ 
and nonnegative coefficients:
\begin{equation}
\label{eq:def-increasings-design}
    \increasings =
    \left\{
        \design \vtheta,
        \qquad
        \vtheta=(\theta_1,...,\theta_n)^T\in\Rn:
        \quad
        \theta_k \ge 0 \text{ for all } k\ge 2
    \right\}.
\end{equation}
If $\vu = \design \vtheta$ with $\vtheta$ as in \eqref{eq:def-increasings-design},
then $k(\vu)$ is also the number of strictly positive entries among $\theta_2,...,\theta_n$.

\subsection{Convex}
If $D_{n-1}$ and $D_n$ are the rectangular matrices defined in
\eqref{eq:def-Dmatrix} then
\begin{align}
    \convexs 
    = \{ \vu=(u_1,\dots,u_n)^T\in\Rn: \; D_{n-1}D_n\vu \ge \vzero = (0,...,0)^T \},
\end{align}
If $\design$ is the matrix defined in \eqref{eq:design},
then we have the linear model representation:
\begin{equation}
\label{eq:def-convexs-design}
    \convexs =
    \left\{
        \design^2 \vtheta,
        \qquad
        \vtheta=(\theta_1,...,\theta_n)^T\in\Rn:
        \quad
        \theta_k \ge 0 \text{ for all } k\ge 3
    \right\}.
\end{equation}
If $\vu\in\convexs$ and $\vu = \design^2 \vtheta$ with $\vtheta \ge \vzero$ as in \eqref{eq:def-convexs-design},
then $q(\vu) - 1  \le |\{i=3,...,n: \theta_i > 0\}|$.

\section{Discussion on SOI and OI}

If $K$ is a closed convex set,
there are general methods to bound \eqref{eq:loss} from above
if $\vmu \in K$ and $\hmu=\ls(K)$.
For example, the analysis of \cite{chatterjee2014new}
ensures that $\eqref{eq:loss}$ is bounded from above
by $t_{\sigma,n}(K)^2/n$ with high probability if
\begin{equation}
    \label{eq:cond-gaussian-width}
    \Evmu \sup_{\vu\in K:\, \euclidnorm{\vu-\vmu}\le t_{\sigma,n}(K)}
    \vxi^T(\vu - \vmu)
    \le \frac{t_{\sigma,n}(K)^2}{2},
\end{equation}
where $t_{\sigma,n}(K)$ is a constant that may depend on $\sigma,n$ and $K$.
The left hand side of \eqref{eq:cond-gaussian-width}
is sometimes called the localized Gaussian width of $K$.
However, if $\vmu\notin K$,
to our knowledge there is no general method
to control the regret \eqref{eq:regret} with $E=K$
using conditions similar to \eqref{eq:cond-gaussian-width}
on the localized Gaussian width.
One of the goals of the 
present paper is to fill this gap.
\Cref{thm:isomorphic} allows us to derive inequalities of the form
\begin{equation}
    \label{eq:example-res-soi}
    \scalednorms{\ls(K)-\vmu}
    \le
    \min_{\vu\in K}
    \scalednorms{\vu-\vmu}
    + \frac{c t_{\sigma,n}(K)^2}{n},
\end{equation}
in expectation and with high probability for some absolute constant $c>0$, as soon as
\begin{equation}
    \Evmu \sup_{\vu\in K:\, \euclidnorm{\vu-\Pi_K(\vmu)}\le t_{\sigma,n}(K)}
    \vxi^T(\vu - \Pi_K(\vmu))
    \le \frac{t_{\sigma,n}(K)^2}{2}
\end{equation}
is satisfied, where $\Pi_K(\vmu)$ is the projection of $\vmu$ onto $K$.
The inequality \eqref{eq:example-res-soi}
is a sharp oracle inequality,
i.e., an oracle inequality with leading constant 1,
which yields an upper bound on the regret $R_2$ in
\eqref{eq:regret} with $E$
replaced by $K$.
This is opposed to other oracle inequalities of the form
\begin{equation}
    \label{eq:example-oi}
    \Evmu
    \scalednorms{\hmu-\vmu}
    \le
    C \min_{\vu\in K}
        \scalednorms{\vu-\vmu}
        +
        \frac{t_{\sigma,n}(K)^2}{n},
\end{equation}
where the leading constant $C$ is strictly greater than 1.
The right hand sides of \eqref{eq:example-res-soi} and \eqref{eq:example-oi} are minimized
if $\vu$ is the projection of $\vmu$ onto $K$,
i.e.,
$\min_{\vu\in K}
\scalednorms{\vu-\vmu} = \scalednorms{\Pi_K(\vmu)-\vmu}$
where $\Pi_K:\Rn\rightarrow \K$ is the projection onto $\K$.

A consequence of \Cref{prop:misspecification}
is that an upper bound on the regret of order 2 
is stronger than an upper bound on the estimation error
\eqref{eq:estimation-error}.
Thus, sharp oracle inequalities such as 
\eqref{eq:soi-tstar} always
imply upper bounds on
$\scalednorms{\ls(K) - \Pi_K(\vmu)}$.
On the other hand, oracle inequalities such as \eqref{eq:example-oi}
with leading constant $C$ strictly greater than 1 do not imply
upper bounds on the estimation error
\eqref{eq:estimation-error}.

\section{Unimodal approximation}

For any $V\ge 0$, define the set $E_V = \left\{ m\in \mathbb N: m \ge \left(\frac{V^2 n}{\sigma^2\log(en)}\right)^{1/3}\right\}$
and the integer $k^*(V)$ by $k^*(V) = 1$ if $E_V$ is empty
and $k^*(V) = \min E_v$ otherwise.
\begin{lemma}[Lemma 2 in \cite{bellec2015sharp}]
    \label{lemma:approximation-increasing}
    Let $\vmu\in\increasings$ and let $
    V \ge \mu_n - \mu_1$.
    For any $k=1,...,n$,
    there exists a sequence $\bar\vu\in\increasings$ with $k(\vu) \le k$
    such that
    $\inftynorm{\bar\vu - \vmu} \le \frac{V}{2k}$. 
    Next, there exists a sequence $\vu^*\in\mathcal U$ with $k(\vu^*) \le k^*(V)$
    such that
    \begin{equation}
        \inftynorm{\vu^* - \vmu}
        \le
        \frac{1}{4}
        \max\left( 
            \left(\frac{\sigma^2 V \log(en)}{n}\right)^{2/3}
            ,
            \frac{\sigma^2 \log(en)}{n}
        \right)
        .
        \label{eq:approx-vustar}
    \end{equation}
    In addition, 
    \begin{equation}
        \frac{\sigma^2 k^*(V)}{n} \log\frac{en}{k^*(V)}
        \le
        2 \max\left( 
            \left(\frac{\sigma^2 V \log(en)}{n}\right)^{2/3}
            ,
            \frac{\sigma^2 \log(en)}{n}
        \right)
        .
        \label{eq:algebra-log-k}
    \end{equation}
\end{lemma}

\begin{lemma}
    \label{lemma:approximation}
    Let $\vmu\in\mathcal U$ and let $
    V \ge \max_{i=1,...,n} \mu_i - \min_{i=1,...,n}\mu_i$.
    For any $k=1,...,n$,
    there exists a sequence $\bar\vu\in\mathcal U$ with $k(\vu) \le 2k$
    such that
    $\inftynorm{\bar\vu - \vmu} \le \frac{V}{2k}$. 
    Next, there exists a sequence $\vu^*\in\mathcal U$ with $k(\vu^*) \le 2 k^*(V)$
    such that \eqref{eq:approx-vustar} holds.
\end{lemma}
\begin{proof}
    To deduce \Cref{lemma:approximation} from
    \Cref{lemma:approximation-increasing}, one can proceed as follows.
    If $\vmu\in\mathcal U$ is non-increasing on $\{1,...,m\}$ and 
    nondecreasing on $\{m+1,...,n\}$, we apply \Cref{lemma:approximation}
    to the sequences $-\vmu_{\{1,...,m\}}$ and $\vmu_{\{m+1,...,n\}}$. 
    This yields the existence of $\vu^*_1\in\mathcal S^\uparrow_m$
    and $\vu^*_2\in\mathcal S^\uparrow_{n-m}$ such that $k(\vu^*_1)\le k^*(V)$
    and $k(\vu^*_2)\le k^*(V)$.
    The concatenation $\vu^* \coloneqq (-\vu^*_1,\vu^*_2)$ is unimodal,
    satisfies $k(\vu^*) \le k(\vu^*_1) + k(\vu^*_2) \le 2k^*(V)$
    and is such that \eqref{eq:approx-vustar} holds.
    By the same argument, for any $k=1,...,n$ there exists $\bar \vu \in\mathcal U$
    with $k(\bar\vu) \le 2k$
    such that $\inftynorm{\bar\vu - \vmu} \le \frac{V}{2k}$.
\end{proof}
By simple algebra, we deduce from \eqref{eq:algebra-log-k} that
\begin{equation}
    \frac{\sigma^2 (2k^*(V) + 1)}{n} \log\frac{en}{2 k^*(V) + 1}
    \le
    \max\left( 
        4 \left(\frac{\sigma^2 V \log(en)}{n}\right)^{2/3}
        ,
        \frac{3\sigma^2 \log(en)}{n}
    \right)
    .
    \label{eq:algebra-log-2k+1}
\end{equation}